\documentclass[11pt]{amsart}

\usepackage[a4paper,margin=3cm]{geometry}
\usepackage{amsmath,amssymb,amsthm,mathrsfs}

\usepackage[T1]{fontenc}
\usepackage[utf8]{inputenc}
\usepackage{enumitem}
\usepackage{hyperref}
\usepackage{cleveref}
\usepackage{tikz}
\usetikzlibrary{arrows.meta, positioning}
\usepackage{url}

\newtheorem{theorem}{Theorem}[section]
\newtheorem{lemma}[theorem]{Lemma}
\newtheorem{proposition}[theorem]{Proposition}
\newtheorem{corollary}[theorem]{Corollary}

\theoremstyle{definition}
\newtheorem{definition}[theorem]{Definition}
\newtheorem{remark}[theorem]{Remark}
\newtheorem{example}[theorem]{Example}

\newcommand{\twoheaduparrow}{\mathbin{\uparrow\!\uparrow}}

\numberwithin{equation}{section}

\title{Topological and differentiable aspects of Clifford semigroups}
\author{S. Bonzio, A. Loi and G. Zecchini}
\date{}
\address{Department of Mathematics and Computer Science, University of Cagliari, Italy}
\email{stefano.bonzio@unica.it}
\keywords{Clifford semigroups, Bowman topology, metrizability, Hilbert's fifth problem, Lie groups.}

\subjclass[2020]{22A15, 20M18}

\begin{document}

\maketitle

\begin{abstract}
This paper investigates the interplay between algebraic structure, topology, and differentiability in Clifford semigroups. The study is developed along three main themes. First, in the compact Hausdorff setting, we provide an explicit construction of a compatible metric for the Bowman topology. Second, we address Hilbert-fifth-type questions by establishing criteria under which the maximal subgroups are forced to be Lie groups. Finally, we prove a structural rigidity theorem: $C^1$-regularity at the idempotents implies that the idempotent semilattice is discrete.
\end{abstract}
\section{Introduction}

Clifford semigroups form a natural class of inverse semigroups at the
interface between semigroup theory and topological algebra. Algebraically,
every Clifford semigroup admits a decomposition as a strong semilattice of
groups:
\[
S=\bigsqcup_{e\in E(S)} G_e,
\]
where $E(S)$ is the semilattice of idempotents and $G_e$ is the maximal
subgroup of $S$ containing $e$. This decomposition makes Clifford semigroups a
convenient framework for studying how global topological properties are
governed by the structure of the semilattice of idempotents and by the family of
maximal subgroups.

In the compact Hausdorff setting, several natural topologies on a Clifford
semigroup arise from the semilattice $E(S)$ and the associated functor
$e\mapsto G_e$. In particular, Yeager \cite{Yeager} and Bowman \cite{Bowman} introduced canonical
topologies adapted to this decomposition and proved, under suitable
assumptions, that they recover the original topology of a compact Clifford
semigroup. A natural problem is therefore to determine when these topologies
are metrizable and, more importantly, whether one can construct explicit
compatible metrics.

A second theme of the paper concerns Hilbert-fifth-type questions for
topological Clifford semigroups; the relevant classical background is recalled
in Section~\ref{sec:hilbert}. Since a Clifford semigroup is generally far from
being a manifold, or even a Lie object, globally, the natural question is not
whether $S$ itself is a Lie semigroup, but rather under which topological
assumptions the maximal subgroups $G_e$ must be Lie groups. We shall say that a
topological Clifford semigroup is of Lie type if each of its maximal subgroups
is a Lie group.

A third theme is a rigidity phenomenon at the level of idempotents. We show
that a weak differentiability assumption near idempotents has strong global
consequences for the semigroup structure: namely, $C^1$-regularity at the
idempotents forces the idempotent semilattice to be discrete, and hence forces
the semigroup to split as a strong semilattice of topological groups.

The paper develops these three aspects in a unified framework.

\medskip

Our first main result concerns the construction of an explicit metric compatible with the Bowman
topology.

\begin{theorem}\label{thm:intro-bowman}
Let $S$ be a compact Hausdorff topological Clifford semigroup such that
$E(S)$ is a metrizable perfect semilattice and each maximal subgroup $G_e$ is
compact metrizable. Assume moreover that the associated functor $e\mapsto G_e$
is inverse-limit preserving. Then it is possible to construct an explicit metric on $S$ compatible with the Bowman topology.
\end{theorem}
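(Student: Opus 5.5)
Since $S$ is a compact Hausdorff Clifford semigroup, I will use the Bowman description of its topology (recalled in the introduction) as the working definition. Write $S=\bigsqcup_{e\in E(S)}G_e$ with structure homomorphisms $\phi_{e,f}\colon G_e\to G_f$ for $f\le e$, given by $\phi_{e,f}(x)=xf$. Under the standing assumptions (perfect semilattice, functor $e\mapsto G_e$ inverse-limit preserving), the Bowman topology has the following basic neighbourhoods of $x\in G_e$:
\[
W(U,V)=\bigcup_{f\in U}\phi_{e\cdot,f}^{-1}\text{-type sets } \{y\in G_f : yg\in V\phi_{e,g} \text{ for suitable } g\},
\]
where $U$ ranges over neighbourhoods of $e$ in $E(S)$ and $V$ over neighbourhoods of $x$ in $G_e$. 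Loosely, $y$ is close to $x$ if $\pi(y)$ is close to $e=\pi(x)$ and the two points become close after both are projected down to a common lower idempotent, with $\pi(y)=yy^{-1}$.

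The plan is to build a metric of the form
\[
D(x,y)=d_E(\pi(x),\pi(y))+\sum_{n\ge1}2^{-n}\,\min\bigl(1,\,d_{g_n}(x g_n,\,y g_n)\bigr),
\]
with the following ingredients:
\begin{itemize}
\item $d_E$ is a compatible metric on $E(S)$.
\item $d_{g}$ is a fixed compatible metric on the compact metrizable group $G_g$, which I will take bi-invariant by averaging over Haar measure.
\item $(g_n)$ is a countable dense subset of $E(S)$, which exists because $E(S)$ is compact metrizable.
\end{itemize}
Since $xg\in G_{\pi(x)g}$, I would compare $xg_n$ and $yg_n$ only when $\pi(x)g_n=\pi(y)g_n$. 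Otherwise I would replace that term by the constant $1$, or else use a common lower idempotent $\pi(x)\pi(y)g_n$ and the metric of that group.

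I choose the second option: the $n$-th term is $\min(1,d_{h}(xh,yh))$ with $h=\pi(x)\pi(y)g_n$. Note that $(xh)(xh)^{-1}=\pi(x)h=h$, so both $xh$ and $yh$ lie in $G_h$. To control the countably many metrics $d_h$ that can occur, I would instead fix a countable family of idempotents $h_m$, closed under products and dense (this is possible by metrizability). Each term is then a pseudometric, since the triangle inequality holds termwise for a fixed $h$. Positivity requires care with varying $h$; I would handle it through the $d_E$ term plus a separation argument.

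The verification proceeds in three steps.

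\textbf{(i) $D$ is a metric.} If $D(x,y)=0$, then $\pi(x)=\pi(y)=e$. In that case $h\le e$ for every term, and I need $xh=yh$ for all $h$ in a family approaching $e$ from below to force $x=y$. This is exactly where the perfectness of $E(S)$ together with inverse-limit preservation enters: $G_e\cong\varprojlim_{h<e,\,h\to e}G_h$, so points are separated by their projections.

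\textbf{(ii) Every $D$-ball contains a Bowman neighbourhood.} This follows from continuity of multiplication: the map $y\mapsto yh$ is continuous, and $\pi$ is continuous.

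\textbf{(iii) Every Bowman basic set contains a $D$-ball.} Since both topologies are compact Hausdorff, it suffices to show that the $D$-topology is Hausdorff and coarser than the given topology. A continuous bijection from a compact space to a Hausdorff space is a homeomorphism, so (ii) together with the Hausdorff property from (i) already yields compatibility. This reduces (iii) to nothing.

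\textbf{Main obstacle.} The key difficulty is the triangle inequality when the idempotent $h$ depends on the pair $(x,y)$. My first attempt would be to avoid that dependence entirely, by defining the $n$-th term as $\min(1,d_{g_n}(xg_n,yg_n))$ when $\pi(x)g_n=\pi(y)g_n$ and $1$ otherwise. The triangle inequality then holds termwise, because equality of the projections $\pi(\cdot)g_n$ is an equivalence relation. The remaining issue is that this term is discontinuous in $\pi$. I would then need to check that the discontinuity does not make the $D$-topology finer than the Bowman one, which would break (ii).

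If it does, I would fall back on a Kuratowski-type embedding, $x\mapsto\bigl(\pi(x),(xg_n)_n\bigr)$ into a suitable metrizable hyperspace. Its injectivity again rests on inverse-limit preservation.
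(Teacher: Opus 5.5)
There is a genuine gap. You never produce a per-index term that is both a pseudometric and continuous on $S$, and that is the whole difficulty.

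Your first option (fixed $g_n$, term set to $1$ when $\pi(x)g_n\neq\pi(y)g_n$) fails in exactly the way you feared. Take $S=E(S)=[0,1]$ with $xy=\min(x,y)$ and trivial groups, which satisfies all the hypotheses. For $x\neq y$ one has $\min(x,g_n)\neq\min(y,g_n)$ iff $g_n>\min(x,y)$. Hence for $x<1$ and every $y\neq x$ you get $D(x,y)\ge\sum_{g_n>x}2^{-n}>0$. Every point of $[0,1)$ is therefore $D$-isolated, and step (ii) fails.

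Your second option ($h=\pi(x)\pi(y)g_n$) computes the three sides of a triangle in three different groups $G_h$, each carrying an independently chosen metric $d_h$. Haar averaging gives invariance, but no coherence between different groups. So neither the triangle inequality nor continuity of $y\mapsto D(x,y)$ follows. Continuity of $y\mapsto yh$ does not help, because $h$, and with it $d_h$, moves with $y$. This is the same kind of pair-dependent comparison the paper uses in Theorem~\ref{th:yeager-isometric}, and it works there only because of the isometry assumption (Y2). A countable product-closed family of idempotents does not rescue it, since $\pi(x)\pi(y)g_n$ ranges over uncountably many idempotents. The Kuratowski fallback is unspecified: the points $xg_n$ lie in different groups, and embedding into $S^{\mathbb N}$ would presuppose the metrizability you are trying to prove.

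The missing idea is to never compare across groups. Project only into countably many fixed groups $G_b$, each with one fixed metric, and switch each projection off continuously where it ceases to be defined. The paper does this as follows.
\begin{itemize}
\item It takes a countable domain basis $\mathcal B$ of $E(S)$, which exists because the Lawson topology is second countable; this is where perfectness is used.
\item For $b\in\mathcal B$ it uses the weight $a_b(e)=\rho(e,E(S)\setminus\twoheaduparrow b)$. This weight is positive exactly on the Lawson-open set $\twoheaduparrow b\subseteq\uparrow b$, where $x\mapsto xb\in G_b$ is defined and continuous, and it vanishes on the boundary.
\item The functions $a_b(\pi(x))\,d_b(xb,t_{b,k})$, with $(t_{b,k})_k$ dense in $G_b$, make $P_b$ an $\ell^1$-distance between images, so the triangle inequality is automatic.
\item Continuity follows by separating the cases $b\ll e_0$ and $b\not\ll e_0$.
\item Separation inside $G_e$ uses $e=\sup\{b\in\mathcal B:b\ll e\}$ together with inverse-limit preservation.
\end{itemize}
Your outer framework coincides with the paper's: the term $\rho(\pi(x),\pi(y))$, and the compact-to-Hausdorff argument that makes (iii) automatic.
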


The metrizability of a topological Clifford semigroup is a problem successfully solved in \cite{MetrizabilityClifford} (see also  \cite{BanakhCompact}), however the solution is typically non-constructive, that is, the existence of a metric is provided but with no explicit form. 

The construction combines a metric on the idempotent semilattice with a
countable family of weighted metrics of the projections of elements into
lower groups $G_b$, where $b$ ranges over a countable domain-theoretic basis of
$E(S)$. The explicit metric is introduced in Section~\ref{sec:metric}; see in
particular \eqref{eq:bowman-metric}, Lemma~\ref{lem:d-metric},
Theorem~\ref{th:metric-equals-bowman}, and
Corollary~\ref{cor:bowman-metrizable-explicit}.

\medskip

Our next main result concerns Hilbert-fifth-type criteria for topological
Clifford semigroups.

\begin{theorem}\label{thm:intro-hilbert}
Let $S$ be a topological Clifford semigroup.
\begin{enumerate}
\item If $S$ is weakly locally Euclidean at the idempotents and is a strong
semilattice of topological groups, then each maximal subgroup $G_e$ is a Lie
group.
\item If $S$ is locally compact and NSS, then each maximal subgroup $G_e$ is a
Lie group.
\item If $S$ is weakly locally Euclidean at the idempotents and each maximal
subgroup $G_e$ is locally arcwise connected, then each maximal subgroup $G_e$
is a Lie group.
\end{enumerate}
In particular, in all cases each maximal subgroup of $S$ is a Lie group; that
is, $S$ is of Lie type.
\end{theorem}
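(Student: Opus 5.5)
The strategy is to reduce each of the three items to a classical solution of Hilbert's fifth problem for topological groups, applied to each maximal subgroup $G_e$ with its subspace topology. First I will check that $G_e$, with the topology induced from $S$, is a topological group. Multiplication on $G_e$ is the restriction of the continuous multiplication of $S$. In a topological Clifford semigroup inversion $x\mapsto x^{-1}$ is assumed continuous on $S$, so its restriction to $G_e$ is continuous as well. Thus the only issue in each item is to verify the hypotheses of the relevant group-theoretic theorem.

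\textbf{Item (1).} Here $S$ is a strong semilattice of topological groups, and I will use this to say that $G_e$ is open in the subspace it occupies near $e$. Weak local Euclideanity at the idempotents gives a neighbourhood $U$ of $e$ in $S$ containing a subset homeomorphic to an open set of some $\mathbb{R}^n$, or more precisely making $G_e\cap U$ locally Euclidean at $e$. By homogeneity of $G_e$ under left translations $x\mapsto gx$, which are homeomorphisms of $G_e$, the whole group $G_e$ is then locally Euclidean. The Gleason--Montgomery--Zippin theorem then shows that $G_e$ is a Lie group.

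The main obstacle lies in this step: one must make sure the local Euclidean chart at $e$ really lives inside $G_e$. A chart for $S$ near $e$ could a priori mix points from neighbouring groups $G_f$. The strong-semilattice-of-topological-groups hypothesis, in the sense of the definition recalled in Section~\ref{sec:hilbert}, is exactly what separates $G_e$ locally. I will first try to use that definition directly to show that $G_e\cap U$ is open in the Euclidean piece.

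\textbf{Item (2).} I will show that $G_e$ is locally compact and NSS. The NSS property passes to subspaces: a neighbourhood $V$ of $e$ in $S$ containing no nontrivial subgroup gives the neighbourhood $V\cap G_e$ in $G_e$ with the same property. Local compactness requires $G_e$ to be locally closed in $S$. For this I will use that $G_e=\{x : xx^{-1}=e\}$ is the preimage of the point $e$ under the continuous map $x\mapsto xx^{-1}$, so $G_e$ is closed in $S$ (assuming $S$ is Hausdorff, which is implicit in local compactness). A closed subset of a locally compact space is locally compact, so $G_e$ is a locally compact NSS group. Gleason's theorem (locally compact NSS groups are Lie) then finishes this case.

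\textbf{Item (3).} Local arcwise connectedness is used to reach the same conclusion as in item (1) without the strong-semilattice hypothesis. Weak local Euclideanity at $e$ gives a Euclidean neighbourhood structure. Since $G_e$ is locally arcwise connected, arcs in $G_e$ starting at $e$ remain in $G_e$. Combining this with the closedness of $G_e$ from item (2), I plan to deduce that $G_e$ is locally compact and finite dimensional. Alternatively, I will invoke the known result that a locally compact, locally connected, finite-dimensional group is Lie, which follows from Montgomery--Zippin.

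I expect this item to be the delicate one. My fallback is to show that $G_e$ has no small subgroups, using the Euclidean neighbourhood of $e$: a small compact subgroup there would give a compact group acting freely on a Euclidean ball. I would then reduce to item (2).
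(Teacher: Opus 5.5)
Your items (1) and (2) follow the paper. In (1), the step you flag as the main obstacle is immediate. By definition a strong semilattice of topological groups carries the disjoint-union topology, so $G_e$ is open in $S$. Hence $U_e\cap G_e$ is open in the chart domain $U_e$, is homeomorphic to an open subset of $\mathbb{R}^{n(e)}$, and homogeneity plus Montgomery--Zippin finishes. In (2), the paper \emph{defines} NSS for $S$ as NSS of every $G_e$, so your restriction argument is unnecessary but harmless. For closedness of $G_e=\pi^{-1}(e)$, Theorem~\ref{th:nss-clifford} assumes $E(S)$ is $T_2$, where you instead take local compactness to include Hausdorffness. Either way Gleason--Yamabe concludes.

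Item (3) is where you depart from the paper, and as written the key step is not justified. Your route (locally compact, locally connected, finite-dimensional groups are Lie) needs $G_e$ to be locally compact. You obtain this from ``the closedness of $G_e$ from item (2)'', but that closedness rested on $S$ being Hausdorff, which is not assumed in (3). Only the chart domain $U_e$ is known to be Hausdorff.

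The step can be repaired locally. Put $U':=U_e\cap\pi^{-1}(U_e)$. If $x\in U'\setminus G_e$, then $\pi(x)$ and $e$ are distinct points of the open Hausdorff set $U_e$. Take an open set $A\subseteq U_e$ containing $\pi(x)$ but not $e$; then $\pi^{-1}(A)$ is a neighbourhood of $x$ missing $G_e$. So $G_e\cap U'$ is closed in $U'$. It is therefore a locally compact open neighbourhood of $e$ in $G_e$, and it embeds in $\mathbb{R}^{n(e)}$, so it has dimension at most $n(e)$. Montgomery--Zippin then applies.

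Your fallback does not work as stated. A small compact subgroup acts freely on $G_e$ by translation, but $G_e\cap U_e$ is only a subset of a Euclidean chart, not a ball or manifold, so there is nothing to apply a free-action argument to. Reducing to (2) would again need local compactness.

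The paper sidesteps all of this with the Gleason--Palais theorem (Theorem~\ref{th:gleason-palais}). It requires only local arcwise connectedness and a continuous injection of some neighbourhood of the identity into a finite-dimensional metric space. The restricted chart $\varphi_e|_{U_e\cap G_e}$ supplies the latter at once, with no local compactness or closedness of $G_e$ needed. Your repaired route instead rests on the Montgomery--Zippin criterion, at the cost of the extra local-closedness lemma above.
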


These results are proved in Section~\ref{sec:hilbert}; see
Theorems~\ref{th:hilbert-clifford}, \ref{th:nss-clifford},
and~\ref{th:arc-connected-clifford}.

\medskip

The main rigidity result of the paper is the following.

\begin{theorem}\label{thm:intro-c1}
Let $S$ be a topological Clifford semigroup which is $C^1$ at the idempotents.
Then the semilattice $E(S)$ is discrete. In particular, $S$ is a strong
semilattice of topological groups.
\end{theorem}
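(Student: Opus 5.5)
Since the definition of ``$C^1$ at the idempotents'' is only given later in the paper, I take it to mean the following. Each idempotent $e$ has an open neighbourhood $U_e\subseteq S$ that is homeomorphic to an open subset of some $\mathbb{R}^n$ via a chart $\varphi_e$. In this chart the multiplication $(x,y)\mapsto xy$, restricted to a neighbourhood of $(e,e)$, is continuously differentiable.

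The plan is to linearise at $e$ and exploit idempotency. Put $\mu(x,y)=xy$ in local coordinates centred at $\varphi_e(e)=0$, and write $A=\partial_1\mu(0,0)$ and $B=\partial_2\mu(0,0)$. These are linear endomorphisms of $\mathbb{R}^n$.

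\textbf{Step 1: the tangent behaviour along $E(S)$.}
\begin{enumerate}
\item Since $ex=xe$ in a Clifford semigroup, the map $x\mapsto ex$ equals $x\mapsto xe$. Hence $A=B$.
\item Since $e(ex)=ex$, the map $L_e\colon x\mapsto ex$ is idempotent, so its derivative $P=B$ satisfies $P^2=P$.
\item Differentiating the diagonal map $x\mapsto x^2$ at $e$ gives $D(x^2)(0)=A+B=2P$.
\item The set $E(S)\cap U_e$ is the fixed-point set of this squaring map.
\end{enumerate}
Suppose, towards a contradiction, that $e$ is not isolated in $E(S)$. Take idempotents $f_k\to e$ with $f_k\neq e$. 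Write $v_k=\varphi_e(f_k)$ and pass to a subsequence so that $v_k/|v_k|\to v$ with $|v|=1$. The fixed-point relation $\mu(v_k,v_k)=v_k$ and the $C^1$ hypothesis (Taylor's formula with $o(|v_k|)$ remainder) give $2Pv=v$. Because $P$ is a projection, its eigenvalues are $0$ and $1$. So $2Pv=v$ forces $Pv=v/2$, and then $v=P(2Pv)=2P^2v=2Pv=v$ holds trivially, which is not yet a contradiction. We have to be more careful: $P^2=P$ together with $Pv=v/2$ gives $v/2=P^2v=P(v/2)=v/4$. Hence $v=0$, contradicting $|v|=1$. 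Therefore every idempotent is isolated in $E(S)$, and $E(S)$ is discrete.

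\textbf{Step 2: the strong semilattice conclusion.} With $E(S)$ discrete, I would observe that the retraction $x\mapsto xx^{-1}$ (continuous, because inversion is continuous in a topological Clifford semigroup as considered in the paper) maps $S$ onto the discrete space $E(S)$. Each $G_e$ is the preimage of the point $e$, so each $G_e$ is clopen. Consequently $S$ is the topological disjoint union of the $G_e$. Each $G_e$ is a topological group, and the structure maps $\phi_{e,f}(x)=xf$ are continuous. This gives a strong semilattice of topological groups, in the sense used for Theorem~\ref{thm:intro-hilbert}.

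The main obstacle I expect is Step 1. First, the $C^1$ hypothesis must genuinely give differentiability of $\mu$ jointly at $(e,e)$ in a single chart containing both factors. Second, the computation depends on the chart being centred at $e$, so that $\mu(0,0)=0$ and the linearisations are legitimate. If the paper's definition only gives $C^1$ one-sided translations, I would instead use $x\mapsto x^2=x\cdot x$ via the chain rule on $L_x$. In that case I would verify $D(\mathrm{sq})(0)=2P$ by continuity of the derivative of $(x,y)\mapsto xy$ near the diagonal.
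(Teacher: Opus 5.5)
Your linearisation matches the paper's: $A=B=P$ by centrality, $P^2=P$ from $e(ex)=ex$, and the derivative of squaring at $0$ is $2P$. The gap is in the final step. You replaced the paper's Definition~\ref{def:c1-idempotents} by charts into $\mathbb R^n$, and your contradiction argument depends on that. Extracting a subsequence with $v_k/|v_k|\to v$, $|v|=1$, uses compactness of the unit sphere. In the paper the model space $X_e$ is an arbitrary Banach space, and the theorem is meant to hold there. Finite dimensionality appears only as an extra hypothesis in Corollary~\ref{cor:c1-idempotents-lie}, and Remark~\ref{rem:banach-limits} says the rigidity is independent of it. In an infinite-dimensional $X_e$ the vectors $v_k/\|v_k\|$ need not have a convergent subsequence. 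So as written you have proved only the finite-dimensional case. Also, the detour after $2Pv=v$ is unnecessary: $1/2$ is never an eigenvalue of an idempotent operator.

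The repair avoids normalising and uses the identity the paper uses. Differentiability of $\mu$ at $(0,0)$ gives $v_k=\mu(v_k,v_k)=2Pv_k+r_k$ with $\|r_k\|=o(\|v_k\|)$, that is, $(2P-I)v_k=-r_k$. Since $(2P-I)^2=4P^2-4P+I=I$, applying $2P-I$ gives $v_k=-(2P-I)r_k$. Hence $\|v_k\|\le\|2P-I\|\,\|r_k\|=o(\|v_k\|)$, which is impossible for $v_k\neq 0$ and $k$ large. Sequences suffice here because the chart makes $e$ have a countable neighbourhood base.

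The paper instead notes that $DH(0)=2P-I$ is invertible for $H(u)=\mu(u,u)-u$. It then applies the Banach inverse function theorem to make $H$ injective near $0$, which needs $H$ to be $C^1$. Your repaired blow-up argument avoids the inverse function theorem and needs only Fr\'echet differentiability of $\mu$ at the single point $(0,0)$, so it proves the statement under a weaker hypothesis. Your Step~2 is fine and agrees with the paper's use of Proposition~\ref{prop:mp-equivalences}.
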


This rigidity theorem is established in Section~\ref{sec:c1}; see
Theorem~\ref{th:c1-idempotents-discrete}.

As a consequence of this rigidity theorem and the Lie-type criteria proved in
Section~\ref{sec:hilbert}, we obtain the following corollary.

\begin{corollary}\label{cor:intro-c1-lie}
Let $S$ be a topological Clifford semigroup which is $C^1$ at the idempotents
and weakly locally Euclidean at the idempotents. Then $S$ is of Lie type.
\end{corollary}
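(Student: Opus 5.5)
The corollary follows by chaining the rigidity theorem with the first Lie-type criterion. Let $S$ be a topological Clifford semigroup that is $C^1$ at the idempotents and weakly locally Euclidean at the idempotents.

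\textbf{Step 1.} Since $S$ is $C^1$ at the idempotents, Theorem~\ref{thm:intro-c1} (that is, Theorem~\ref{th:c1-idempotents-discrete}) applies. It gives that $E(S)$ is discrete, and hence that $S$ is a strong semilattice of topological groups. We will use it exactly in this form: each $G_e$ is a topological group in the subspace topology, and $S$ is the disjoint union of the $G_e$ with structure maps given by multiplication by idempotents.

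\textbf{Step 2.} Now $S$ satisfies both hypotheses of item (1) of Theorem~\ref{thm:intro-hilbert} (Theorem~\ref{th:hilbert-clifford}): it is weakly locally Euclidean at the idempotents by assumption, and it is a strong semilattice of topological groups by Step 1. That theorem gives that every maximal subgroup $G_e$ is a Lie group. By definition, this means $S$ is of Lie type.

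\textbf{Potential obstacle.} The only thing to check is that the phrase ``strong semilattice of topological groups'' in the conclusion of Theorem~\ref{th:c1-idempotents-discrete} is the same notion required in the hypothesis of Theorem~\ref{th:hilbert-clifford}. If the latter requires more, namely that each $G_e$ be open in $S$ or that $S$ carry the sum topology, I would derive this from discreteness of $E(S)$. The map $s\mapsto ss^{-1}$ is continuous, because inversion is continuous in a topological Clifford semigroup. Each $G_e$ is the preimage of the open singleton $\{e\}$, so each $G_e$ is clopen. Then a neighbourhood of $e$ in $S$ can be shrunk to lie inside $G_e$, so the weakly locally Euclidean condition at $e$ transfers to $G_e$ itself. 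This is exactly the setting of the classical Gleason--Montgomery--Zippin theorem, which underlies Theorem~\ref{th:hilbert-clifford}.
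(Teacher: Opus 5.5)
Your proof is correct and is exactly the paper's argument: Theorem~\ref{th:c1-idempotents-discrete} makes $S$ a strong semilattice of topological groups, and Theorem~\ref{th:hilbert-clifford} then applies directly to give that every $G_e$ is a Lie group. The compatibility check in your last paragraph is exactly what Proposition~\ref{prop:mp-equivalences} supplies: discreteness of $E(S)$ is equivalent to each $G_e$ being open, and to $S$ carrying the sum topology. So no extra work is needed there.
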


This is proved in Corollary~\ref{cor:c1-plus-wle-lie}.

In the finite-dimensional-at-idempotents setting, this rigidity becomes a
characterization.

\begin{theorem}\label{thm:intro-finite}
For a topological Clifford semigroup $S$, the following are equivalent:
\begin{enumerate}
\item $S$ is $C^1$ at the idempotents with finite-dimensional charts;
\item $S$ is a strong semilattice of finite-dimensional Lie groups.
\end{enumerate}
\end{theorem}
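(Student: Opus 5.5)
We prove the two implications separately, leaning on Theorem~\ref{thm:intro-c1} for (1)$\Rightarrow$(2) and on a direct chart construction for (2)$\Rightarrow$(1). Throughout, "$C^1$ at the idempotents with finite-dimensional charts" means each idempotent $e$ has an open neighbourhood $U_e\subseteq S$. This neighbourhood is homeomorphic to an open subset of some $\mathbb{R}^{n_e}$, and in these charts the multiplication (and inversion) are $C^1$ near $(e,e)$, resp.\ near $e$.

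\textbf{(1)$\Rightarrow$(2).} By Theorem~\ref{thm:intro-c1}, $E(S)$ is discrete and $S$ is a strong semilattice of topological groups. In particular each $G_e$ is open in $S$. Hence $U_e\cap G_e$ is an open neighbourhood of $e$ in $G_e$ homeomorphic to an open subset of $\mathbb{R}^{n_e}$. So $G_e$ is locally Euclidean at its identity, and by translation everywhere.

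\begin{itemize}
\item One could now invoke the Gleason--Montgomery--Zippin solution of Hilbert's fifth problem, or Theorem~\ref{thm:intro-hilbert}(1): being locally Euclidean at $e$ gives "weakly locally Euclidean at the idempotents".
\item More elementarily, the restriction of multiplication to $G_e\times G_e$ is $C^1$ near $(e,e)$ in the chart. Left and right translations by elements near $e$ are then $C^1$, and one can transport the chart to get a $C^1$ atlas on $G_e$ in which multiplication is $C^1$.
\item It is known that a $C^1$ group structure can be upgraded to a real-analytic one, so either route gives that $G_e$ is a Lie group of dimension $n_e<\infty$.
\end{itemize}

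\textbf{(2)$\Rightarrow$(1).} Suppose $S=\bigsqcup_{e\in E}G_e$ is a strong semilattice of finite-dimensional Lie groups. In the paper's sense this includes that $E$ carries the discrete topology, so each $G_e$ is open. For the chart at $e$, take $U_e$ to be an exponential coordinate neighbourhood inside $G_e$, i.e.\ an open subset of $\mathbb{R}^{\dim G_e}$. Near $(e,e)$ the product $U_e\times U_e$ lies in $G_e\times G_e$, which is open in $S\times S$. There multiplication is the Lie group multiplication, hence smooth, and inversion is smooth as well. Products $G_e G_f\subseteq G_{ef}$ with $e\neq f$ never meet a neighbourhood of $(e,e)$ once $U_e\subseteq G_e$, so they impose no condition. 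Hence $S$ is $C^1$ at the idempotents with finite-dimensional charts.

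\textbf{Main obstacle.} The expected difficulty is the bookkeeping in (1)$\Rightarrow$(2), specifically making sure the chart $U_e$ supplied by the definition restricts to a chart of $G_e$. Openness of $G_e$ from discreteness of $E(S)$ is the key point, and it is exactly what Theorem~\ref{thm:intro-c1} provides. A second point to check is that "strong semilattice of topological groups" in the paper's definition indeed entails that $E$ is discrete and the $G_e$ are open, as used in (2)$\Rightarrow$(1). If it did not, we would first deduce openness of the $G_e$ from (2) by the same mechanism as in Theorem~\ref{thm:intro-c1}.
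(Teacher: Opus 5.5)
Your proposal is correct and follows the paper's route. For (1)$\Rightarrow$(2) you use the rigidity theorem to get discreteness of $E(S)$, hence openness of each $G_e$, and then apply Hilbert's fifth problem to the chart restricted to $U_e\cap G_e$, exactly as in Corollary~\ref{cor:c1-idempotents-lie} via Theorem~\ref{th:hilbert-clifford}; for (2)$\Rightarrow$(1) you take a Lie chart inside the open subgroup $G_e$, as in Proposition~\ref{prop:lie-implies-c1}. When you write out (2)$\Rightarrow$(1) against Definition~\ref{def:c1-idempotents}, take $W_e$ to be the chart domain in $G_e$ and choose $U_e\subseteq W_e$ small enough that $U_eU_e\subseteq W_e$ (your ``near $(e,e)$'' covers this, whereas the paper's literal choice $W_e=U_e$ would require $U_eU_e\subseteq U_e$); also, $C^1$-regularity of inversion is not part of that definition, so you need not check it.
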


The precise characterization is proved in
Theorem~\ref{th:c1-idempotents-characterization}.

\medskip

Taken together, these results show that the idempotent semilattice governs
both the global topology and the local regularity of a topological Clifford
semigroup. In particular, differentiability at idempotents imposes strong
global restrictions on the semigroup structure.

\medskip

The paper is organized as follows. In Section~\ref{sec:preliminaries} we recall
the algebraic and topological background on Clifford semigroups. In
Section~\ref{sec:topological} we discuss and compare the main
topologies arising on Clifford semigroups. In Section~\ref{sec:metric} we study metrizability and construct an
explicit metric for the Bowman topology (Theorem \ref{thm:intro-bowman}). In Section~\ref{sec:hilbert} we prove
the Hilbert-fifth-type results (Theorem \ref{thm:intro-hilbert}). Finally, in Section~\ref{sec:c1} we study
$C^1$-regularity at idempotents and derive the resulting rigidity phenomena (Theorems \ref{thm:intro-c1} and \ref{thm:intro-finite}).

\section{Preliminaries}\label{sec:preliminaries}

\subsection{Inverse and Clifford semigroups}

We briefly recall the basic algebraic notions that will be used throughout the
paper; standard references are \cite{Howie,Grillet}.

In an arbitrary semigroup $S$, we denote by $E(S)$ the set of idempotents of
$S$, that is,
\[
E(S):=\{e\in S : e^2=e\}.
\]

\begin{definition}[Inverse semigroup]
An \emph{inverse semigroup} is a semigroup $S$ such that for every $x\in S$
there exists a unique element $x^{-1}\in S$ satisfying
\[
xx^{-1}x=x,
\qquad
x^{-1}xx^{-1}=x^{-1}.
\]
\end{definition}

In an inverse semigroup the idempotents commute pairwise, hence $E(S)$ is a
semilattice with respect to the multiplication. Moreover, for every $x,y\in S$,
one has
\[
(xy)^{-1}=y^{-1}x^{-1}.
\]

\begin{proposition}\label{prop:clifford-equivalent}
Let $S$ be an inverse semigroup. The following conditions are equivalent:
\begin{enumerate}
\item $xx^{-1}=x^{-1}x$ for every $x\in S$;
\item every idempotent of $S$ is central.
\end{enumerate}
\end{proposition}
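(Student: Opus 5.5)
We prove the two implications separately, using only the inverse-semigroup facts recalled above: idempotents commute, $(xy)^{-1}=y^{-1}x^{-1}$, and uniqueness of inverses. We also use the standard fact that $xx^{-1}$ and $x^{-1}x$ are idempotents, which follows directly from $xx^{-1}x=x$.

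For (2)$\Rightarrow$(1), assume every idempotent is central and let $x\in S$. Since $x^{-1}x$ is an idempotent, it commutes with $x$, and since $xx^{-1}$ is an idempotent, it commutes with $x^{-1}$. Then
\[
xx^{-1}=(xx^{-1}x)x^{-1}=x(x^{-1}x)x^{-1}=(x^{-1}x)xx^{-1}.
\]
Symmetrically,
\[
x^{-1}x=x^{-1}(xx^{-1}x)=x^{-1}(xx^{-1})x=(xx^{-1})x^{-1}x.
\]
The two right-hand sides are products of the same two commuting idempotents, so they are equal. Hence $xx^{-1}=x^{-1}x$.

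For (1)$\Rightarrow$(2), let $e\in E(S)$ and $x\in S$; we must show $ex=xe$. The first step is to note that $e^{-1}=e$, since $eee=e$ and inverses are unique. Next consider $y=ex$. Then $y^{-1}=x^{-1}e$, so
\[
y^{-1}y=x^{-1}ex \qquad\text{and}\qquad yy^{-1}=exx^{-1}e=e(xx^{-1}).
\]
The last equality uses that $e$ and $xx^{-1}$ are commuting idempotents. By (1), $x^{-1}ex=exx^{-1}$.

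Now compute
\[
ex=e(xx^{-1}x)=(exx^{-1})x=(x^{-1}ex)x=x^{-1}e(xx).
\]
This expression is awkward. A cleaner route is to write $x=x(x^{-1}x)=(xx^{-1})x$ and insert the identity $x^{-1}ex=exx^{-1}$:
\[
xe=x\,x^{-1}x\,e=x(x^{-1}x e).
\]
Apply (1) to $z=xe$: we have $z^{-1}=ex^{-1}$, so $z^{-1}z=ex^{-1}xe=ex^{-1}x$ and $zz^{-1}=xex^{-1}$. Hence $xex^{-1}=e\,x^{-1}x=e\,xx^{-1}$, where the last step uses (1) for $x$. Then
\[
ex=e\,xx^{-1}x=(xex^{-1})x=xe\,(x^{-1}x)=xe\,(xx^{-1}).
\]
Similarly, from $y=ex$,
\[
xe=xx^{-1}xe=x(x^{-1}x)e=x(x^{-1}ex)\cdot\ ?
\]
This does not close cleanly. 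To make it precise, rewrite
\[
xe=(xx^{-1})xe=(x^{-1}x)xe,
\]
and use $x^{-1}ex=e\,x^{-1}x$, which is (1) applied to $y$ together with (1) for $x$. Then
\[
(x^{-1}x)xe\ldots
\]
Rather than chase this symbolically, we use the following consequences of (1) applied to $x$, $ex$ and $xe$:
\[
x^{-1}ex=e\,x^{-1}x \qquad\text{and}\qquad xex^{-1}=e\,xx^{-1}.
\]
Combining them gives
\[
ex=e\,xx^{-1}x=xex^{-1}x=x(e\,x^{-1}x)=x(x^{-1}ex).
\]
Replacing $x^{-1}ex$ by $ex^{-1}x$ via the relation and commuting idempotents, this becomes $x e x^{-1}x = xe\,(x^{-1}x)$. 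The final step is to show that $xe\,x^{-1}x=xe$. This holds because $x^{-1}x=xx^{-1}$ and $e$ commute, so
\[
xe\,x^{-1}x=x\,x^{-1}x\,e=xe.
\]

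The main obstacle is this bookkeeping in (1)$\Rightarrow$(2). The key identities are (1) applied to the auxiliary elements $ex$ and $xe$, combined with the commutativity of idempotents.
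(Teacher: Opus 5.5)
Your proof is correct, but it does not follow the paper: the paper gives no argument and only cites Howie. The textbook route goes through Green's relations. In an inverse semigroup $x\,\mathcal R\,xx^{-1}$ and $x\,\mathcal L\,x^{-1}x$, so (1) says that every $\mathcal H$-class is a group. Completely regular inverse semigroups are exactly those whose idempotents are central. Your argument is purely equational. It uses only commutation of idempotents, $(xy)^{-1}=y^{-1}x^{-1}$ and $e^{-1}=e$, which are exactly the facts recalled just before the statement, so it keeps the preliminaries self-contained. The citation instead ties the statement to the semilattice-of-groups picture that the paper uses immediately afterwards.

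Your write-up of (1)$\Rightarrow$(2) needs pruning:
\begin{enumerate}
\item the abandoned attempts (the lines ending in `?' and `\ldots') must go;
\item the identity $x^{-1}ex=e\,x^{-1}x$ obtained from $ex$ is never needed;
\item your final chain passes through $x(x^{-1}ex)$ only to return to $x(e\,x^{-1}x)$.
\end{enumerate}
The whole direction is short. Applying (1) to $xe$, with $(xe)^{-1}=ex^{-1}$, and to $x$ gives $xex^{-1}=e\,x^{-1}x=e\,xx^{-1}$, hence
\[
ex=(e\,xx^{-1})x=(xex^{-1})x=x\,(e\,x^{-1}x)=x\,(x^{-1}x\,e)=xe,
\]
where the fourth equality uses only that the idempotents $e$ and $x^{-1}x$ commute.
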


\begin{proof}
This is standard (see, for instance, \cite[Chapter~5]{Howie}).
\end{proof}

\begin{definition}[Clifford semigroup]
An inverse semigroup satisfying the equivalent conditions of
Proposition~\ref{prop:clifford-equivalent} is called a
\emph{Clifford semigroup}.
\end{definition}

In a Clifford semigroup it is convenient to write
\[
x^0:=xx^{-1}=x^{-1}x\in E(S).
\]

\begin{lemma}\label{lem:maximal-subgroup}
Let $S$ be a Clifford semigroup and let $e\in E(S)$. Then
\[
G_e:=\{x\in S : x^0=e\}
\]
is a group with identity $e$.
\end{lemma}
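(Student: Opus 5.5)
We verify the group axioms for $G_e$ directly, using only the Clifford identities: idempotents are central, $x^0=xx^{-1}=x^{-1}x$, and $(xy)^{-1}=y^{-1}x^{-1}$. We also use that in an inverse semigroup $(x^{-1})^{-1}=x$ and $e^{-1}=e$ for an idempotent $e$. Both follow from uniqueness of inverses, since $x$ satisfies the defining equations as an inverse of $x^{-1}$, and $e$ satisfies them as an inverse of $e$.

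\textbf{Step 1: $e\in G_e$ and $e$ is an identity on $G_e$.} Since $e^{-1}=e$, we get $e^0=ee=e$, so $e\in G_e$. For $x\in G_e$,
\[
ex=(xx^{-1})x=x,
\qquad
xe=x(x^{-1}x)=x,
\]
so $e$ is a two-sided identity on $G_e$.

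\textbf{Step 2: closure under multiplication.} Let $x,y\in G_e$. Then $(xy)^0=xy(xy)^{-1}=xyy^{-1}x^{-1}=xy^0x^{-1}=xex^{-1}$. Because $e$ is central, this equals $exx^{-1}=ee=e$. Hence $xy\in G_e$. Associativity is inherited from $S$.

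\textbf{Step 3: inverses.} For $x\in G_e$ we have $(x^{-1})^0=x^{-1}(x^{-1})^{-1}=x^{-1}x=x^0=e$, so $x^{-1}\in G_e$. Moreover $xx^{-1}=x^{-1}x=e$, so $x^{-1}$ is a two-sided group inverse of $x$ with respect to the identity $e$.

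Together these steps show that $G_e$ is a group with identity $e$. The only mildly delicate point is Step 2, where centrality of idempotents (condition (2) of Proposition~\ref{prop:clifford-equivalent}) is needed to move $e$ past $x$. The remaining steps are routine consequences of the definitions.
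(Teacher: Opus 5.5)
Your proof is correct and takes essentially the same route as the paper's. Both show that $e$ is an identity, get closure under products from $(xy)^0 = xyy^{-1}x^{-1}$ plus centrality of idempotents, and get closure under inversion from $(x^{-1})^0 = x^{-1}x = e$. Your version is slightly more careful in two places: you check explicitly that $e\in G_e$, which the paper takes for granted, and you get $ex=xe=x$ directly from $xx^{-1}x=x$ and $x^0=xx^{-1}=x^{-1}x$ rather than from centrality.
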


\begin{proof}
Let $e\in E(S)\cap G_e$. For every $x\in G_e$ one has
\[
ex=xe=x,
\]
because $e=x^0$ is central, hence $G_{e}$ is a monoid (with identity $e$).
If $x,y\in G_e$, then
\[
(xy)^0=xy(xy)^{-1}=xyy^{-1}x^{-1}=xx^{-1}yy^{-1}=ee=e,
\]
so $xy\in G_e$. If $x\in G_e$, then
\[
(x^{-1})^0=x^{-1}x=e,
\]
hence $x^{-1}\in G_e$. Therefore $G_e$ is a group.
\end{proof}

The structure of a Clifford semigroup is completely determined by the
semilattice $E(S)$, the groups $G_e$, and the bonding homomorphisms
\[
\varphi_{f,e}\colon G_f\to G_e,
\qquad
\varphi_{f,e}(x):=ex=xe,
\qquad (e\leq f),
\]
where the order on $E(S)$ is defined by
\[
e\leq f \quad \Longleftrightarrow \quad ef=e.
\]
Thus the first index of $\varphi_{f,e}$ denotes the domain (group) and the second
index denotes the codomain (group).

\begin{remark}[Clifford semigroups as strong semilattices of groups]
\label{rem:clifford-strong-semilattice}
Let $S$ be a Clifford semigroup. Then:
\begin{enumerate}
\item $E(S)$ is a semilattice;
\item for every $e\in E(S)$, the set $G_e$ is a group;
\item for every $e\leq f$, the map
\[
\varphi_{f,e}\colon G_f\to G_e,\qquad \varphi_{f,e}(x)=ex,
\]
is a group homomorphism;
\item the bonding homomorphisms satisfy
\[
\varphi_{e,e}=\mathrm{id}_{G_e},
\qquad
\varphi_{g,e}=\varphi_{f,e}\circ\varphi_{g,f}
\quad\text{whenever } e\leq f\leq g;
\]
\item one has the disjoint decomposition
\[
S=\bigsqcup_{e\in E(S)} G_e,
\]
and the multiplication in $S$ is given by
\[
st=\varphi_{e,ef}(s)\cdot \varphi_{f,ef}(t),
\qquad s\in G_e,\ t\in G_f,
\]
where the product on the right-hand side is computed in the group $G_{ef}$.
\end{enumerate}
Thus every Clifford semigroup is a strong semilattice of groups.
Conversely, every strong semilattice of groups gives rise to a Clifford
semigroup via the above construction.
\end{remark}

\begin{proposition}\label{prop:group-characterization}
Let $S$ be a Clifford semigroup. The following are equivalent:
\begin{enumerate}
\item $S$ is a group;
\item $S$ has a unique idempotent;
\item for every $x,y\in S$, one has $xyy^{-1}=x$.
\end{enumerate}
\end{proposition}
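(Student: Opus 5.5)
I will prove the cycle of implications (1)$\Rightarrow$(2)$\Rightarrow$(3)$\Rightarrow$(1), using only the definition of a Clifford semigroup, the notation $x^0=xx^{-1}=x^{-1}x$, and Lemma~\ref{lem:maximal-subgroup}.

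For (1)$\Rightarrow$(2), suppose $S$ is a group with identity $1$ and let $e\in E(S)$. Multiplying $e^2=e$ by $e^{-1}$, where the inverse is taken in the group, gives $e=1$. Hence $E(S)=\{1\}$. One small point needs care: the semigroup inverse of Definition~(Inverse semigroup) coincides with the group inverse. This holds because the group inverse satisfies both defining equations, and the semigroup inverse is unique.

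For (2)$\Rightarrow$(3), suppose $E(S)=\{e\}$. For every $y\in S$ we have $yy^{-1}=y^0\in E(S)$, so $yy^{-1}=e$. Likewise $x^0=e$, so every $x$ lies in $G_e$, and $e$ is the identity of $G_e$ by Lemma~\ref{lem:maximal-subgroup}. Therefore $xyy^{-1}=xe=x$.

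For (3)$\Rightarrow$(1), assume $xyy^{-1}=x$ for all $x,y$. Then $x\,y^0=x$ for all $x,y$. Given two idempotents $e,f$, take $x=e$ and $y=f$. Since $f^0=ff^{-1}=f$, because an idempotent is its own inverse, we get $ef=e$. Symmetrically $fe=f$. Idempotents commute, so $e=f$, and $E(S)$ is a singleton $\{e\}$. Every $x$ then has $x^0=e$, so $S=G_e$, which is a group by Lemma~\ref{lem:maximal-subgroup}.

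There is no genuine obstacle in this argument. The only step needing attention is the identification of the two notions of inverse in (1)$\Rightarrow$(2), and the uniqueness of inverses in an inverse semigroup settles it.
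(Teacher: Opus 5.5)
Your proof is correct and follows essentially the paper's route: (2)$\Rightarrow$(3) is identical to the paper's argument. In (3)$\Rightarrow$(1) you substitute the idempotents $e,f$ directly to get $e=ef=fe=f$, which is the same computation the paper carries out for arbitrary $x\in G_e$, $y\in G_f$ via the bonding maps $\varphi_{e,ef}$, $\varphi_{f,ef}$ and the disjointness of the maximal subgroups.
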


\begin{proof}
The equivalence between (1) and (2) is immediate.

Assume (2). If $e$ is the unique idempotent of $S$, then $S=G_e$, so for all
$x,y\in S$ we have
\[
xyy^{-1}=xe=x.
\]

Assume now (3). Let $x\in G_e$ and $y\in G_f$. Then
\[
x=xyy^{-1}=xf=\varphi_{e,ef}(x),
\]
and similarly
\[
y=\varphi_{f,ef}(y).
\]
Since the groups $G_e$ are pairwise disjoint, this forces $e=ef=f$, hence
$e=f$. Therefore $E(S)$ is a singleton, so $S$ is a group by (2).
\end{proof}

\begin{definition}[Trivial Clifford semigroup]
\label{def:trivial-clifford}
Let $S$ be a Clifford semigroup, written as
\[
S=\bigsqcup_{e\in E(S)} G_e
\]
with bonding homomorphisms
\[
\varphi_{f,e}\colon G_f\to G_e
\qquad (e\leq f),
\]
as in Remark~\ref{rem:clifford-strong-semilattice}.
We say that $S$ is a \emph{trivial Clifford semigroup} if there exist a group
$G$ and group isomorphisms
\[
\theta_e\colon G_e\to G
\qquad (e\in E(S))
\]
such that for every $e\leq f$ one has
\[
\varphi_{f,e}=\theta_e^{-1}\theta_f.
\]
Equivalently, after identifying each $G_e$ with $G$ via $\theta_e$, all the
bonding homomorphisms become the identity map on $G$.
\end{definition}

\begin{remark}\label{rem:trivial-clifford-product}
If $S$ is a trivial Clifford semigroup, then $S$ is isomorphic, as a Clifford
semigroup, to the direct product $E(S)\times G$, where the multiplication is
given by
\[
(e,g)\cdot(f,h)=(ef,gh),
\qquad
(e,g)^{-1}=(e,g^{-1}).
\]
Conversely, for every semilattice $E$ and every group $G$, the direct product
$E\times G$ with the above operations is a Clifford semigroup.
\end{remark}

Recall that a semigroup $S$ is \emph{left (right) cancellative} if it satisfies the left (right) cancellation laws and \emph{cancellative} if it does satisfy both.

\begin{proposition}\label{prop:cancellative-i-group}
Let $S$ be a left (or right) cancellative inverse semigroup. Then $S$ is a group.
\end{proposition}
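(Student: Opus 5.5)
The plan is to show that a left cancellative inverse semigroup has exactly one idempotent, and then conclude that $S$ is a group. The right cancellative case follows by the symmetric argument, or by passing to the opposite semigroup, which is again inverse since the inverse axioms are left--right symmetric.

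\emph{Step 1.} Assume $S$ is left cancellative and let $e,f\in E(S)$. In an inverse semigroup idempotents commute, so $e(ef)=e^2f=ef=e\cdot ef$. This observation is not yet enough, since it is just $e\cdot ef = e\cdot ef$. The better route is to compare $e\cdot e=e\cdot e$ with $e\cdot e$ written differently: for any $x\in S$ we have $xx^{-1}\cdot x = x = x\cdot x^{-1}x$. Applying left cancellation to $e\cdot(ef)=e\cdot(fe)$ only gives commutativity, which we already know. Instead take $e\cdot f = e f$ and $e\cdot (ef) = ef$. Then $e\cdot f=e\cdot(ef)$, so $f=ef$. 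By symmetry, $e\cdot(ef)=e\cdot f$ as well; more directly, $f\cdot e = f\cdot(fe)$ gives $e=fe$. Hence $e=fe=ef=f$. So $E(S)$ is a singleton $\{e\}$. The right cancellative version uses $f\cdot e=(fe)\cdot e$ in the same way.

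\emph{Step 2.} Since $E(S)=\{e\}$, for every $x$ both $xx^{-1}$ and $x^{-1}x$ equal $e$. It follows that $ex=xx^{-1}x=x$ and $xe=xx^{-1}x=x$. Thus $e$ is a two-sided identity, and $x^{-1}$ is a two-sided inverse of $x$, so $S$ is a group. Alternatively, $S$ is then Clifford by Proposition~\ref{prop:clifford-equivalent}, and Proposition~\ref{prop:group-characterization}~(2)$\Rightarrow$(1) applies.

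The only real point is Step~1, namely choosing the right pair of equal products to cancel; once uniqueness of the idempotent is obtained, the rest is immediate. I expect no genuine obstacle beyond noting that $S$ should be nonempty, which is implicit.
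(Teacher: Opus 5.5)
Your proof is correct, and it reaches the conclusion by a somewhat different and shorter route than the paper.

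You cancel an idempotent against itself, $e\cdot f=e\cdot(ef)$, to get $f=ef$ for all $e,f\in E(S)$. Commutativity of idempotents then collapses $E(S)$ to a single point. The group axioms follow at once, and the Clifford identity $xx^{-1}=x^{-1}x$ comes for free, since both sides equal the unique idempotent.

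The paper instead cancels $x^{-1}$ and $x$ in $x^{-1}y=x^{-1}(xx^{-1}y)$ and $xy=x(x^{-1}xy)$ to obtain $y=xx^{-1}y=x^{-1}xy$ for all $x,y$. It then uses a second double cancellation (of $y$ and $y^{-1}$) to derive the Clifford identity. Only then does it conclude via condition (3) of Proposition~\ref{prop:group-characterization}, whose proof is where uniqueness of the idempotent actually appears.

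So the paper passes through the intermediate statement ``$S$ is Clifford'', which fits its framework. Your argument goes directly to condition (2) and needs no auxiliary proposition. You also treat the right cancellative case explicitly, which the paper leaves to symmetry.

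When you write it up, delete the false starts in Step~1 (the remarks about $e\cdot(ef)=e\cdot(fe)$ ``only giving commutativity''). They are not part of the argument and obscure the one line that does the work.
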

\begin{proof}
Let $x,y\in S$. Since $S$ is inverse, one has
\[ x^{-1}y = x^{-1}xx^{-1}y \text{ and } xy = xx^{-1}xy,
\]
hence, by left cancellativity
\begin{equation}\label{eq: cancellazione a sinistra}
  y = xx^{-1}y = x^{-1}xy.
 \end{equation}
Therefore  
\[ xx^{-1}yy^{-1} = x^{-1}xyy^{-1}.\]
By commutativity of the idempotents one gets 
\[ yy^{-1}xx^{-1} = yy^{-1}x^{-1}x,\]
 hence, by left cancellativity (applied twice) 
 \[
 xx^{-1}= x^{-1}x.
 \]
 This shows that $S$ is a Clifford semigroup which, moreover, satisfies $y = xx^{-1}y$ (by equation \eqref{eq: cancellazione a sinistra}) for every $x,y\in S$. Since idempotents are central we get
 \[
 y = y xx^{-1},
 \]
 hence, by Proposition \ref{prop:group-characterization}, $S$ is a group.
%
\end{proof}

\subsection{Topological Clifford semigroups}

\begin{definition}[Topological semigroup]
A \emph{topological semigroup} is a semigroup $(S,\cdot)$ endowed with a
topology such that the multiplication
\[
m\colon S\times S\to S,\qquad m(x,y)=xy,
\]
is continuous.
\end{definition}

\begin{definition}[Topological inverse semigroup]
A \emph{topological inverse semigroup} is an inverse semigroup $S$ endowed with
a topology such that both multiplication and inversion
\[
c\colon S\to S,\qquad c(x)=x^{-1},
\]
are continuous.
\end{definition}

\begin{definition}[Topological Clifford semigroup]
A \emph{topological Clifford semigroup} is a topological inverse semigroup whose
underlying inverse semigroup is a Clifford semigroup.
\end{definition}

If $S$ is a topological Clifford semigroup, then each maximal subgroup $G_e$,
endowed with the subspace topology, is a topological group, and each bonding
homomorphism $\varphi_{f,e}$ is continuous.

\begin{definition}[Trivial topological Clifford semigroup]
\label{def:trivial-topological-clifford}
A \emph{trivial topological Clifford semigroup} is a topological Clifford
semigroup $(S,\tau)$ such that:
\begin{enumerate}
\item the underlying Clifford semigroup is trivial in the sense of
Definition~\ref{def:trivial-clifford};
\item under some identification $S\cong E(S)\times G$, the topology $\tau$
coincides with the product topology.
\end{enumerate}
Equivalently, $(S,\tau)$ is topologically and algebraically isomorphic to a
direct product $E\times G$ of a topological semilattice $E$ and a topological
group $G$.
\end{definition}

\begin{example}\label{ex:algebraically-trivial-not-topologically}
A Clifford semigroup may be trivial algebraically without being trivial
topologically. Let
\[
G=\mathbb Z_2=\{\overline 0,\overline 1\}
\]
and
\[
E=\{0\}\cup\{1/n:n\in\mathbb N\}\subseteq \mathbb R,
\]
endowed with the semilattice operation
\[
xy=
\begin{cases}
x,& x=y,\\
0,& x\neq y.
\end{cases}
\]
Consider the trivial Clifford semigroup $S=E\times G$. Give
$E\times\{\overline 0\}$ the usual convergent-sequence topology and
$E\times\{\overline 1\}$ the discrete topology. Then $S$ is a metrizable
topological Clifford semigroup whose underlying Clifford semigroup is
algebraically trivial, but it is not homeomorphic to $E\times G$ with the
product topology.
\end{example}

\subsection{The idempotent map}

Let $S$ be a topological inverse semigroup. Define
\[
\pi\colon S\to E(S),
\qquad
\pi(x):=xx^{-1}.
\]

\begin{lemma}\label{lem:pi-continuous}
The map $\pi$ is continuous. Moreover, if $S$ is a topological Clifford
semigroup, then $\pi$ is a semigroup homomorphism and
\[
\pi^{-1}(\{e\})=G_e
\qquad\text{for every } e\in E(S).
\]
\end{lemma}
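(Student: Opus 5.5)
The plan is to write $\pi$ as a composite of maps already known to be continuous, and then to check the algebraic claims directly from the Clifford identities. The inversion map $c(x)=x^{-1}$ is continuous by the definition of a topological inverse semigroup. The diagonal map $x\mapsto (x,x)$ is continuous, and so is $x\mapsto (x,c(x))$, since each of its components is continuous. Multiplication $m\colon S\times S\to S$ is continuous, so
\[
\pi=m\circ(\mathrm{id}_S,c)
\]
is continuous as a map $S\to S$. Since $\pi(x)=xx^{-1}$ is idempotent ($xx^{-1}xx^{-1}=xx^{-1}$), $\pi$ takes values in $E(S)$. A map into a subspace is continuous exactly when its composite with the inclusion is continuous, so $\pi\colon S\to E(S)$ is continuous for the subspace topology.

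Assume now that $S$ is Clifford. By definition $\pi(x)=x^0$, and the homomorphism property is the computation already carried out in the proof of Lemma~\ref{lem:maximal-subgroup}. Using $(xy)^{-1}=y^{-1}x^{-1}$ and the centrality of the idempotent $yy^{-1}$ (Proposition~\ref{prop:clifford-equivalent}),
\[
\pi(xy)=xy\,y^{-1}x^{-1}=x\,(yy^{-1})\,x^{-1}=xx^{-1}\,yy^{-1}=\pi(x)\pi(y).
\]
The fibre identity is a restatement of the definition of $G_e$ in Lemma~\ref{lem:maximal-subgroup}:
\[
\pi^{-1}(\{e\})=\{x\in S: x^0=e\}=G_e .
\]

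No step is a genuine obstacle. The only points needing care are the appeal to centrality when moving $yy^{-1}$ past $x^{-1}$, and the remark that continuity into $S$ passes to continuity into the subspace $E(S)$.
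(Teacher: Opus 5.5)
Your proof is correct and follows essentially the same argument as the paper. Continuity comes from writing $\pi$ as multiplication composed with $x\mapsto(x,x^{-1})$, the homomorphism property from the centrality of $yy^{-1}$, and $\pi^{-1}(\{e\})=G_e$ directly from the definition of $G_e$. You also spell out that $\pi(x)$ is idempotent and that continuity into $S$ gives continuity into the subspace $E(S)$; the paper leaves both points implicit.
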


\begin{proof}
The map
\[
x\longmapsto (x,x^{-1})
\]
from $S$ to $S\times S$ is continuous, and multiplication is continuous, so
\[
\pi(x)=xx^{-1}
\]
is continuous.

Assume now that $S$ is Clifford. Then for every $x,y\in S$,
\[
\pi(xy)=(xy)(xy)^{-1}=xyy^{-1}x^{-1}=xx^{-1}yy^{-1}=\pi(x)\pi(y),
\]
where we used the fact that idempotents are central. Thus $\pi$ is a semigroup
homomorphism.

Finally, by definition,
\[
G_e=\{x\in S:x^0=e\}=\{x\in S:xx^{-1}=e\},
\]
hence $\pi^{-1}(\{e\})=G_e$.
\end{proof}

\begin{proposition}\label{prop:pi-characterizes-clifford}
Let $S$ be an inverse semigroup. Then $S$ is a Clifford semigroup if and only if
\[
\pi(x^{-1})=(\pi(x))^{-1}
\qquad\text{for every } x\in S.
\]
\end{proposition}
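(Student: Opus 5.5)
Both directions reduce to comparing $\pi(x^{-1})=x^{-1}x$ with $(\pi(x))^{-1}$. In an inverse semigroup every idempotent is its own inverse: for $e\in E(S)$ we have $eee=e$, so $e$ is an inverse of itself, and uniqueness of inverses gives $e^{-1}=e$. Since $\pi(x)=xx^{-1}$ is idempotent, $(\pi(x))^{-1}=xx^{-1}$. On the other side, $\pi(x^{-1})=x^{-1}(x^{-1})^{-1}=x^{-1}x$, using $(x^{-1})^{-1}=x$. This last identity again follows from uniqueness of inverses: the defining equations $xx^{-1}x=x$ and $x^{-1}xx^{-1}=x^{-1}$ say, read the other way, that $x$ is an inverse of $x^{-1}$.

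The condition $\pi(x^{-1})=(\pi(x))^{-1}$ for all $x$ is therefore literally the identity $x^{-1}x=xx^{-1}$ for all $x\in S$. This is condition (1) of Proposition~\ref{prop:clifford-equivalent}, so by the definition of Clifford semigroup the equivalence holds in both directions at once.

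I do not expect a genuine obstacle here. The only points to state explicitly are the two small facts about inverse semigroups, $e^{-1}=e$ for idempotents and $(x^{-1})^{-1}=x$. Both come from the uniqueness clause in the definition of inverse semigroup, so no earlier result beyond Proposition~\ref{prop:clifford-equivalent} is needed. No topology is involved, since the statement is purely algebraic.
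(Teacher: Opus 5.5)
Your proposal is correct and matches the paper's proof: both reduce the condition $\pi(x^{-1})=(\pi(x))^{-1}$ to the Clifford identity $x^{-1}x=xx^{-1}$, using $\pi(x^{-1})=x^{-1}x$ and $(xx^{-1})^{-1}=xx^{-1}$. The only difference is that you derive $(x^{-1})^{-1}=x$ and $e^{-1}=e$ for idempotents explicitly from uniqueness of inverses, whereas the paper uses these facts without comment.
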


\begin{proof}
If $S$ is Clifford, then
\[
\pi(x^{-1})=x^{-1}x=xx^{-1}=(xx^{-1})^{-1}=(\pi(x))^{-1}.
\]

Conversely, if
\[
\pi(x^{-1})=(\pi(x))^{-1},
\]
then
\[
x^{-1}x=(xx^{-1})^{-1}=xx^{-1},
\]
so $S$ satisfies the Clifford identity and is therefore a Clifford semigroup.
\end{proof}

\subsection{Topological semilattices}

We conclude the section with a few facts on topological semilattices that will
be used later.

Throughout the paper, a semilattice is understood as a meet-semilattice. Recall that when $(S,\cdot)$ the following
\[
x\leq y \quad \Longleftrightarrow \quad xy=x,
\]
defines a partial order on $S$ such that $xy$ is the inf of $\{x,y\}$. 

\begin{lemma}\label{lem:hausdorff-semilattice}
Let $S$ be a topological semilattice. Then $S$ is Hausdorff if and only if
\[
\Delta_{\leq}:=\{(x,y)\in S\times S : x\leq y\}
\]
is closed in $S\times S$.
\end{lemma}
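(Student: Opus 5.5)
We prove the two implications separately, in each case tying the order relation to the continuous operation through the identity $x\leq y \iff xy=x$.

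For the direction ``Hausdorff implies $\Delta_{\leq}$ closed'', the plan is to present $\Delta_{\leq}$ as an equalizer. Define two continuous maps $S\times S\to S$ by
\[
m(x,y)=xy,\qquad p_1(x,y)=x .
\]
The map $m$ is continuous because $S$ is a topological semilattice, and $p_1$ is a projection. By the definition of the order,
\[
\Delta_{\leq}=\{(x,y): m(x,y)=p_1(x,y)\}=(m,p_1)^{-1}(\Delta_S),
\]
where $\Delta_S$ denotes the diagonal of $S\times S$. Since $S$ is Hausdorff, $\Delta_S$ is closed. The preimage of a closed set under the continuous map $(m,p_1)\colon S\times S\to S\times S$ is closed, so $\Delta_{\leq}$ is closed.

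For the converse, assume $\Delta_{\leq}$ is closed. By antisymmetry of the partial order,
\[
\Delta_S=\Delta_{\leq}\cap \sigma(\Delta_{\leq}),
\]
where $\sigma(x,y)=(y,x)$ is the coordinate swap. Because $\sigma$ is a homeomorphism of $S\times S$, the set $\sigma(\Delta_{\leq})$ is closed. Hence $\Delta_S$ is an intersection of two closed sets and is therefore closed, which is the standard characterization of $S$ being Hausdorff.

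Neither step presents a real obstacle. The only point to check is that antisymmetry really yields equality of the diagonal with the intersection: if $xy=x$ and $yx=y$, then commutativity gives $x=xy=yx=y$. The reverse inclusion, $\Delta_S\subseteq\Delta_{\leq}\cap\sigma(\Delta_{\leq})$, is immediate from idempotency, since $xx=x$.
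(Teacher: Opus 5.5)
Your proof is correct and follows the paper's argument essentially verbatim. The paper also writes $\Delta_{\leq}$ as the preimage of the diagonal under $(x,y)\mapsto(xy,x)$, and for the converse recovers $\Delta_S=\Delta_{\leq}\cap\Delta_{\geq}$ using the coordinate swap; your explicit check that antisymmetry gives this equality is a detail the paper leaves implicit.
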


\begin{proof}
Assume first that $S$ is Hausdorff. Consider the continuous map
\[
f\colon S\times S\to S\times S,
\qquad
f(x,y)=(xy,x).
\]
Then
\[
\Delta_{\leq}=f^{-1}(\Delta_S),
\]
where $\Delta_S$ is the diagonal of $S\times S$. Since $\Delta_S$ is closed in
a Hausdorff space, $\Delta_{\leq}$ is closed.

Conversely, assume that $\Delta_{\leq}$ is closed. The map
\[
g\colon S\times S\to S\times S,
\qquad
g(x,y)=(y,x),
\]
is a homeomorphism, so
\[
\Delta_{\geq}:=g(\Delta_{\leq})
\]
is also closed. Therefore
\[
\Delta_S=\Delta_{\leq}\cap \Delta_{\geq}
\]
is closed in $S\times S$, and hence $S$ is Hausdorff.
\end{proof}

\begin{lemma}\label{lem:compact-semilattice-minimum}
Let $S$ be a compact Hausdorff semilattice. Then $S$ has a minimum element.
\end{lemma}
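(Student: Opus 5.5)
The plan is a finite intersection property argument. For each $x\in S$, consider the principal down-set
\[
{\downarrow}x:=\{y\in S : y\leq x\}=\{y\in S: yx=y\}.
\]
We show first that each ${\downarrow}x$ is closed. By Lemma~\ref{lem:hausdorff-semilattice}, the set $\Delta_{\leq}$ is closed in $S\times S$. The map $y\mapsto (y,x)$ from $S$ to $S\times S$ is continuous, and ${\downarrow}x$ is the preimage of $\Delta_{\leq}$ under it. Alternatively, ${\downarrow}x$ is the equalizer of the continuous maps $y\mapsto yx$ and $y\mapsto y$, which is closed because $S$ is Hausdorff. Each ${\downarrow}x$ is also nonempty, since it contains $x$.

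Next we check that the family $\{{\downarrow}x : x\in S\}$ has the finite intersection property. Given $x_1,\dots,x_n\in S$, put $z:=x_1x_2\cdots x_n$. By commutativity and idempotency,
\[
zx_i=z \quad\text{for each } i,
\]
so $z\leq x_i$ for all $i$. Hence $z\in\bigcap_{i=1}^n{\downarrow}x_i$, and this finite intersection is nonempty.

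Compactness of $S$ then gives an element
\[
m\in\bigcap_{x\in S}{\downarrow}x .
\]
This means $m\leq x$ for every $x\in S$, so $m$ is a minimum of $S$. Equivalently, $mx=m$ for all $x$, so $m$ is a zero element. Uniqueness is automatic by antisymmetry of $\leq$.

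No step is a real obstacle. The only point that needs care is the closedness of the down-sets, and that is exactly where the Hausdorff hypothesis enters, through Lemma~\ref{lem:hausdorff-semilattice}.
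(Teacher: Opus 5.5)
Your proof is correct and follows essentially the same route as the paper. Both show that the principal down-sets ${\downarrow}x$ are closed via Lemma~\ref{lem:hausdorff-semilattice}, witness the finite intersection property by the product $x_1\cdots x_n$, and conclude by compactness; your justification of closedness as the preimage of $\Delta_{\leq}$ under $y\mapsto(y,x)$ is slightly more explicit than the paper's.
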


\begin{proof}
For each $x\in S$, the down set
\[
\downarrow x:=\{y\in S:y\leq x\}
\]
is closed by Lemma~\ref{lem:hausdorff-semilattice}. Moreover, for every finite
family $x_1,\dots,x_n\in S$, one has
\[
x_1\cdots x_n\in \downarrow x_1\cap \cdots \cap \downarrow x_n,
\]
so the family $\{\downarrow x:x\in S\}$ has the finite intersection property.
By compactness,
\[
\{z\} = \bigcap_{x\in S}\downarrow x\neq\varnothing,
\]
and $z$ is the minimum element in $S$.
\end{proof}

\section{Topologies on Clifford semigroups}\label{sec:topological}

A Clifford semigroup may admit several different compatible topologies making it
into a topological Clifford semigroup. In this section we discuss three natural
topological frameworks that arise in the literature: the topology of strong
semilattices of topological groups, the Yeager topology, and the Bowman
topology.

\subsection{Strong semilattices of topological groups}

We begin with the topology considered in \cite{Abd-Allah,MaityPaul-Semilattice},
which turns a Clifford semigroup into a strong semilattice of topological
groups.

\begin{definition}[Strong semilattice of topological groups]
\label{def:strong-semilattice-top-groups}
Let
\[
S=\bigsqcup_{e\in E(S)} G_e
\]
be a Clifford semigroup, written as a strong semilattice of groups as in
Remark~\ref{rem:clifford-strong-semilattice}. We say that $S$ is a
\emph{strong semilattice of topological groups} if each $G_e$ is endowed with a
topology making it a topological group, each bonding homomorphism
\[
\varphi_{f,e}\colon G_f\to G_e
\qquad (e\leq f)
\]
is continuous, and $S$ is endowed with the disjoint union topology generated by
the family of topologies of the groups $G_e$.
\end{definition}

In this situation the maximal subgroups are open by construction, and the
topology on $S$ is exactly the topological sum topology of the family
$\{G_e\}_{e\in E(S)}$.

\begin{definition}[MP property]
\label{def:mp}
Let $(S,\tau)$ be a topological semigroup. We say that $S$ has the
\emph{MP property}\footnote{We choose this nomenclature from and Maity and Paul \cite{MaityPaul-Semilattice}.} if for every open set $O\in\tau$ and every $x\in O$ there
exists an open neighbourhood $U$ of $x$ such that
\[
U\subseteq O\cap J_x,
\]
where $J_x$ denotes the $\mathcal J$-class of $x$.
\end{definition}

\begin{lemma}\label{lem:mp-j-open}
Let $S$ be a topological semigroup. Then $S$ has the MP property if and only if
every $\mathcal J$-class of $S$ is open.
\end{lemma}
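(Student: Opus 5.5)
This is a direct unpacking of Definition~\ref{def:mp}, so I would prove the two implications separately, each by a short neighbourhood argument.

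\emph{MP property implies open $\mathcal J$-classes.} Assume $S$ has the MP property and fix a $\mathcal J$-class $J$. Take any $x\in J$ and apply the MP property with $O=S$, which is open. This gives an open neighbourhood $U_x$ of $x$ with $U_x\subseteq S\cap J_x=J$. Then
\[
J=\bigcup_{x\in J}U_x ,
\]
so $J$ is a union of open sets and hence open.

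\emph{Open $\mathcal J$-classes imply the MP property.} Assume every $\mathcal J$-class is open. Let $O\in\tau$ and $x\in O$. Since $J_x$ is open by assumption, the set $U:=O\cap J_x$ is open. It contains $x$, because $x\in O$ and $x\in J_x$. Trivially $U\subseteq O\cap J_x$, so $U$ is the neighbourhood required by Definition~\ref{def:mp}.

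There is no genuine obstacle here. The only point to check is that the MP property is applied to the open set $O=S$ in the first direction, which is allowed since $S\in\tau$. No algebraic property of $\mathcal J$ is needed beyond the fact that $x\in J_x$.
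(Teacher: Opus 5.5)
Your proof is correct and follows essentially the same argument as the paper. For the forward direction you apply the MP property to $O=S$ at each point $y$ of a $\mathcal J$-class, obtaining $y\in U\subseteq J_y=J_x$; for the converse you take $U=O\cap J_x$. (Strictly, the first direction also uses that $J_y=J_x$ for $y\in J_x$, i.e.\ that $\mathcal J$ is an equivalence relation, but this is immediate.)
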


\begin{proof}
Assume first that $S$ has the MP property. Fix $x\in S$ and let $y\in J_x$.
Applying the MP property to the open set $S$ and the point $y$, we find an open
set $U$ such that
\[
y\in U\subseteq J_y=J_x.
\]
Thus $J_x$ is open.

Conversely, if every $\mathcal J$-class is open and $O$ is open with $x\in O$,
then
\[
U:=O\cap J_x
\]
is an open neighbourhood of $x$ contained in $O\cap J_x$. Hence $S$ has the MP
property.
\end{proof}

Recall that in a Clifford semigroup, the $\mathcal J$-classes coincide with the maximal
subgroups $G_e$.

\begin{remark}\label{rem:mp-clifford}
Let $S$ be a topological Clifford semigroup. Since in a Clifford semigroup the
$\mathcal J$-classes are precisely the maximal subgroups, one has
\[
S \text{ has the MP property}
\quad\Longleftrightarrow\quad
G_e \text{ is open in } S \text{ for every } e\in E(S).
\]
\end{remark}

The next proposition gathers several equivalent formulations that will be used
repeatedly later.

\begin{proposition}\label{prop:mp-equivalences}
Let $S$ be a topological Clifford semigroup. The following are equivalent:
\begin{enumerate}
\item $S$ is a strong semilattice of topological groups;
\item $S$ has the MP property;
\item every $\mathcal J$-class of $S$ is open;
\item every maximal subgroup $G_e$ is open in $S$;
\item $E(S)$ is discrete in $S$;
\item the canonical bijection
\[
\bigsqcup_{e\in E(S)} G_e \longrightarrow S
\]
is a homeomorphism.
\end{enumerate}
\end{proposition}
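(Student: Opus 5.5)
The equivalences are arranged as a cycle, with most links being reformulations of results already proved. The equivalence (2)$\Leftrightarrow$(3) is Lemma~\ref{lem:mp-j-open}. The equivalence (3)$\Leftrightarrow$(4) is Remark~\ref{rem:mp-clifford}, since in a Clifford semigroup the $\mathcal J$-classes are exactly the maximal subgroups $G_e$. So the real content is in linking (4) with (1), (5) and (6).

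For (4)$\Leftrightarrow$(6), I use that $S=\bigsqcup_e G_e$ as sets and each $G_e$ carries the subspace topology. The canonical bijection from the topological sum to $S$ is always continuous, since it is continuous on each summand. It is open exactly when each $G_e$ is open in $S$: a set open in the sum meets each $G_e$ in a relatively open set, and it is open in $S$ whenever all the $G_e$ are open. Conversely, if the bijection is a homeomorphism, then each summand $G_e$ is open in $S$.

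For (1)$\Leftrightarrow$(6), Definition~\ref{def:strong-semilattice-top-groups} says that $S$ carries the disjoint union topology of the $G_e$. The subtlety is that the topologies on the $G_e$ in (1) are a priori arbitrary group topologies. However, once $S$ carries the sum topology, each $G_e$ is open and its subspace topology is its given topology. So (1) says precisely that the topology of $S$ is the sum of the subspace topologies, which is (6). For (6)$\Rightarrow$(1), I take the subspace topologies. These make each $G_e$ a topological group, and each $\varphi_{f,e}(x)=ex$ is continuous as the restriction of multiplication by $e$.

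For (4)$\Leftrightarrow$(5), I read ``$E(S)$ discrete in $S$'' as: $E(S)$ is discrete in its subspace topology. If each $G_e$ is open, then $\{e\}=G_e\cap E(S)$ is relatively open. For the converse, which I expect to be the main obstacle, I use the continuous map $\pi$ of Lemma~\ref{lem:pi-continuous}, whose image lies in $E(S)$. If $E(S)$ is discrete, then $\{e\}$ is open in $E(S)$, so $G_e=\pi^{-1}(\{e\})$ is open in $S$ because $\pi\colon S\to E(S)$ is continuous into the subspace $E(S)$. This step uses only the continuity of inversion and multiplication. The care needed is in checking that $\pi$ is continuous as a map into the subspace $E(S)$, which holds because its image lies in $E(S)$.
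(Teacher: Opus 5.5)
Your proof is correct. For (2)$\Leftrightarrow$(3)$\Leftrightarrow$(4)$\Leftrightarrow$(5) and (4)$\Leftrightarrow$(6) it coincides with the paper's, including the use of $G_e=\pi^{-1}(\{e\})$ for (5)$\Rightarrow$(4).

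The one genuine difference is how you attach condition (1). The paper cites \cite[Theorem~2.19]{MaityPaul-Semilattice} for (1)$\Leftrightarrow$(2). You instead prove (1)$\Leftrightarrow$(6) directly from Definition~\ref{def:strong-semilattice-top-groups}, using two observations:
\begin{enumerate}
\item In a sum topology each summand is open and carries its given topology as subspace topology. So the a priori arbitrary group topologies in (1) are forced to be the subspace topologies.
\item Conversely, the subspace topologies make each $G_e$ a topological group, and each bonding map $x\mapsto ex$ is continuous as a restriction of multiplication.
\end{enumerate}

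Your route is self-contained. It also makes transparent that, under the paper's own definition, (1) is essentially a restatement of (6), with no external input needed. The paper's citation ties the statement to the existing literature, where the notion is studied in its own right. The cost is that an elementary step is outsourced and the reader must check that the cited framework matches the definition used here.
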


\begin{proof}
The equivalence between (1) and (2) is proved in
\cite[Theorem~2.19]{MaityPaul-Semilattice}.
The equivalence (2)$\Leftrightarrow$(3) is Lemma~\ref{lem:mp-j-open}.
The equivalence (3)$\Leftrightarrow$(4) follows from the fact that in a Clifford
semigroup the $\mathcal J$-classes coincide with the maximal subgroups.
Assume (4). Fix $e\in E(S)$. Since the only idempotent in the group $G_e$ is
its identity $e$, we have
\[
E(S)\cap G_e=\{e\}.
\]
Since $G_e$ is open in $S$, the singleton $\{e\}$ is open in the subspace
topology of $E(S)$. Hence $E(S)$ is discrete. Thus (4)$\Rightarrow$(5).
Assume (5). By Lemma~\ref{lem:pi-continuous}, the map
\[
\pi\colon S\to E(S),\qquad \pi(x)=xx^{-1},
\]
is continuous, and
\[
\pi^{-1}(\{e\})=G_e
\qquad\text{for every } e\in E(S).
\]
Since $E(S)$ is discrete, each singleton $\{e\}$ is open in $E(S)$, hence
$G_e$ is open in $S$. Thus (5)$\Rightarrow$(4).
Assume (4). Then the family $\{G_e\}_{e\in E(S)}$ is a pairwise disjoint open
cover of $S$, and the topology on $S$ is exactly the disjoint union topology.
Thus the canonical bijection
\[
\bigsqcup_{e\in E(S)} G_e \longrightarrow S
\]
is a homeomorphism. Hence (4)$\Rightarrow$(6).
Finally, in the disjoint union topology each component $G_e$ is open, so
(6)$\Rightarrow$(4).
\end{proof}

\begin{remark}\label{rem:different-topologies}
Let $S$ be a Clifford semigroup. Even if each maximal subgroup $G_e$ is endowed
with a topology making it a topological group and all bonding homomorphisms are
continuous, the resulting topology on $S$ need not be unique. In particular, a
Clifford semigroup that is algebraically a direct product $E(S)\times G$ need
not be a trivial topological Clifford semigroup in the sense of
Definition~\ref{def:trivial-topological-clifford} (see
Example~\ref{ex:algebraically-trivial-not-topologically}). This is one of the
reasons why the Yeager and Bowman constructions are relevant: they provide
canonical topologies associated with the semilattice and the family of maximal
subgroups.
\end{remark}

\subsection{The Yeager topology}\label{subsec: Yeager}

The decomposition of Clifford semigroups as strong semilattice of groups yields a functor
\[
\mathscr G\colon E(S)\to\mathcal G,
\]
where $E(S)$ is viewed as a small category and $\mathcal G$ denotes the
category of groups.

Throughout this subsection we assume that $E(S)$ is endowed with a compact
Hausdorff topology $\mathcal T_{E(S)}$, each $G_e$ is endowed with a compact
Hausdorff group topology $\mathcal T_e$, and the functor $\mathscr G$ is
inverse-limit preserving in the sense of \cite{Yeager}.

For $U\in\mathcal T_{E(S)}$, $e\in U$, and $V\in\mathcal T_e$, define
\begin{equation}\label{eq:W-yeager}
W(U,(e,V))
:=
\bigcup_{f\in U\cap\uparrow e}\varphi_{f,e}^{-1}(V)
\subseteq S.
\end{equation}

\begin{definition}[Yeager topology \cite{Yeager}]
\label{def:yeager-topology}
A subset $O\subseteq S$ is \emph{open in the Yeager topology} $\mathcal T_Y$ if
for every $(e,s)\in O$ there exists an open neighbourhood $U\in\mathcal
T_{E(S)}$ of $e$ such that for every $f\in U\cap\downarrow e$ there exists
$V_f\in\mathcal T_f$ satisfying:
\begin{enumerate}
\item
$(e,s)\in W(U,(f,V_f))\subseteq O$;
\item whenever $(e',s')\in W(U,(f,V_f))$ and $g\in U\cap\downarrow f$, there
exists $V_g\in\mathcal T_g$ such that
\[
(e',s')\in W(U,(g,V_g))\subseteq O.
\]
\end{enumerate}
\end{definition}

When the semilattice is first countable, Yeager showed that this description
simplifies (see Remark \ref{rem:yeager-inclusion}).

\begin{definition}[First-countable Yeager topology]
\label{def:yeager-first-countable}
Assume that $(E(S),\mathcal T_{E(S)})$ is first countable. The
\emph{first-countable Yeager topology} $\mathcal T_{Y_1}$ is obtained from
Definition~\ref{def:yeager-topology} by requiring only condition~(1).
\end{definition}

\begin{remark}\label{rem:yeager-inclusion}
One always has
$\mathcal T_Y\subseteq \mathcal T_{Y_1}$.
Moreover, if $\mathcal T_Y$ is Hausdorff, then
$\mathcal T_Y=\mathcal T_{Y_1}$
by \cite{Yeager}.
\end{remark}

The relevance of this topology comes from Yeager's reconstruction theorem.

\begin{theorem}[Yeager \cite{Yeager}]\label{th:yeager}
Let $S$ be a compact Hausdorff topological Clifford semigroup. Endow $E(S)$ and
each maximal subgroup $G_e$ with the subspace topologies induced by $S$. Then:
\begin{enumerate}
\item $E(S)$ is a compact Hausdorff topological semilattice;
\item each $G_e$ is a compact Hausdorff topological group;
\item the functor $\mathscr G\colon E(S)\to\mathcal G$ is inverse-limit
preserving;
\item the topology of $S$ coincides with $\mathcal T_Y$;
\item if $E(S)$ is first countable, then the topology of $S$ also coincides
with $\mathcal T_{Y_1}$.
\end{enumerate}
\end{theorem}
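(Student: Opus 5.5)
The statement collects Yeager's reconstruction theorem \cite{Yeager}. I would prove it item by item, with items (1)--(3) routine and item (4) the substantial one.

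\textbf{Items (1) and (2).} $E(S)$ is the equalizer of $x\mapsto x^2$ and the identity, so it is closed in the Hausdorff space $S$, hence compact. The restriction of the multiplication makes it a compact Hausdorff topological semilattice. Each $G_e=\pi^{-1}(\{e\})$ is closed by Lemma~\ref{lem:pi-continuous}, hence compact. Multiplication and inversion restrict continuously to $G_e$, so $G_e$ is a compact Hausdorff topological group; continuity of the $\varphi_{f,e}$ is clear.

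\textbf{Item (3).} I read inverse-limit preservation as follows. Let $D\subseteq E(S)$ be up-directed, so it converges as a net to $e=\sup D$ by the closedness of $\Delta_{\leq}$ (Lemma~\ref{lem:hausdorff-semilattice}) and compactness. Then the canonical map $\Phi\colon G_e\to\varprojlim_{f\in D}G_f$, $x\mapsto(\varphi_{e,f}(x))_f$, must be a topological isomorphism.
\begin{itemize}
\item Continuity and the homomorphism property are immediate.
\item \emph{Injectivity.} If $fx=fy$ for all $f\in D$, then $x=ex=\lim_f fx=\lim_f fy=y$, by continuity of multiplication along the net $f\to e$.
\item \emph{Surjectivity.} Take a compatible family $(x_f)_{f\in D}$ and let $x$ be a cluster point in $S$ of the net $(x_f)$. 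Then $\pi(x)$ is a cluster point of $\pi(x_f)=f$, so $\pi(x)=e$. For fixed $g\in D$ and $f\geq g$ we have $gx_f=x_g$, so $gx=x_g$ by continuity.
\item \emph{Homeomorphism.} This follows because $\Phi$ is a continuous bijection from a compact space to a Hausdorff space.
\end{itemize}

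\textbf{Item (4), the main obstacle.} The aim is to show that the identity $(S,\tau)\to(S,\mathcal T_Y)$ is a homeomorphism. I would do this through two inclusions.
\begin{itemize}
\item \emph{$\tau\subseteq\mathcal T_Y$.} Given a $\tau$-open $O\ni s$ with $s\in G_e$, I need $U$ and sets $V_f$ satisfying conditions (1)--(2) of Definition~\ref{def:yeager-topology}. I would argue by contradiction with nets. If no such $U,V_f$ exist, there are idempotents $f_\alpha\to e$ with $f_\alpha\leq e$ and points $t_\alpha\notin O$ with $\pi(t_\alpha)\geq f_\alpha$ and $f_\alpha t_\alpha\to f_\alpha s$ in a suitable sense. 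A cluster point $t$ of $(t_\alpha)$ in the compact space $S$ lies in $S\setminus O$. Continuity of multiplication forces $et=es=s$, and I would combine this with $\pi(t)\geq e$ and the closedness of $\Delta_{\leq}$ to contradict $t\notin O$. The delicate part is organizing the quantifiers of condition (2), which is recursive; I expect to handle it by choosing $U$ once from a fixed compact neighbourhood, using the fact that $s\,O$-neighbourhoods can be shrunk uniformly by compactness.
\item \emph{$\mathcal T_Y\subseteq\tau$.} I would show that each basic set $W(U,(f,V_f))$ appearing in condition (1) contains a $\tau$-neighbourhood of each of its points. Here the point is that $x\mapsto fx$ is $\tau$-continuous. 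Membership in $\uparrow f$ is not open, and I expect condition (2), which descends to all $g\in U\cap\downarrow f$, to compensate for this. The tool should be item (3): it says that points near $s$ in $\tau$ are controlled by their projections to lower groups.
\end{itemize}
If the second inclusion proves awkward, I would fall back on a different route. First show that $\mathcal T_Y$ is Hausdorff: points in different groups are separated via $\pi$ and $\mathcal T_{E(S)}$, and points in the same group via $V$'s in $G_e$. Then $\tau\subseteq\mathcal T_Y$ with $\tau$ compact and $\mathcal T_Y$ Hausdorff forces equality.

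\textbf{Item (5).} $\tau=\mathcal T_Y$ is Hausdorff, so Remark~\ref{rem:yeager-inclusion} gives $\mathcal T_Y=\mathcal T_{Y_1}$.
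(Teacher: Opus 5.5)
The paper does not prove this theorem; it quotes it from \cite{Yeager}, so your proposal has to stand on its own. Items (1)--(3) do. The closedness arguments are correct. Your proof of (3) is also sound: an up-directed $D$ converges to $\sup D$ because $\Delta_{\leq}$ is closed, injectivity follows from $fx\to ex=x$, surjectivity goes through a cluster point, and compact-to-Hausdorff finishes. It gives more than the injectivity of $\eta_e$ that the paper actually uses in Lemma~\ref{lem:separate-via-basis}. Item (5) is fine once (4) is known.

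The gap is item (4), which is essentially the whole content of Yeager's theorem. Your sketch of $\tau\subseteq\mathcal T_Y$ treats only condition~(1) of Definition~\ref{def:yeager-topology}, and even there it stops short. Knowing $et=s$ and $\pi(t)\geq e$ does not give $t=s$: any $t\in G_h$ with $h>e$ and $et=s$ is a counterexample. You must use that the witnesses lie in $W(U_\alpha,\cdot)$, so $\pi(t_\alpha)\in U_\alpha$, hence $\pi(t)=e$ and $t=et=s$. Indexing by pairs $(U,N)$ with $Us\subseteq N$ and taking $V=N\cap G_{f_U}$ makes ``$f_\alpha t_\alpha\to s$'' precise. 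Condition~(2) is left as a hope. It can be settled by a second compactness argument:
choose $O'$ with $s\in O'\subseteq\overline{O'}\subseteq O$ and obtain condition~(1) for $O'$;
shrink $U$ so that no $g\leq h,h'$ in $\overline U$, $y\in G_h\setminus O$, $y'\in G_{h'}\cap\overline{O'}$ satisfy $gy=gy'$;
set $V_g:=G_g\setminus g\bigl(\pi^{-1}(\overline U\cap{\uparrow}g)\setminus O\bigr)$.
None of this is in the proposal.

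The more serious problem is the other inclusion. You give no argument for $\mathcal T_Y\subseteq\tau$, and your fallback is logically invalid. Once $\tau\subseteq\mathcal T_Y$ holds, $\mathcal T_Y$ is automatically Hausdorff, so proving this adds nothing. The compact-to-Hausdorff principle needs the \emph{finer} topology to be compact. The usual and the discrete topologies on $[0,1]$ show that $\tau\subseteq\mathcal T_Y$, $\tau$ compact and $\mathcal T_Y$ Hausdorff do not force equality. To close the argument you need either $\mathcal T_Y\subseteq\tau$ or compactness of $(S,\mathcal T_Y)$, and this is where the real difficulty sits.

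The set $W(U,(f,V_f))=\pi^{-1}(U)\cap\pi^{-1}({\uparrow}f)\cap\{t:ft\in V_f\}$ is not $\tau$-open, because $\pi^{-1}({\uparrow}f)$ is closed. Unless ${\uparrow}f$ is a neighbourhood of $e$ for some $f\in U\cap{\downarrow}e$, a net $t_\alpha\to s$ may meet these sets only along the moving indices $f_\alpha=e\pi(t_\alpha)$. Nothing bounds the sets $V_{f_\alpha}$ from below uniformly in $\alpha$. Item~(3) does not supply this uniformity, since it only controls elements of the single group $G_e$ through a fixed directed family below $e$.

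If $E(S)$ is Lawson the obstacle disappears. Pick $b\ll e$ in $U$; then $\pi^{-1}(U\cap{\twoheaduparrow b})\cap\{t:bt\in V_b\}$ is a $\tau$-neighbourhood of $s$ inside $O$, which is why Bowman's setting is tractable. The theorem, however, is stated for arbitrary compact Hausdorff $S$, and in that generality your proposal leaves the central step open.
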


\subsection{The Bowman topology}

We now turn our attention to the Bowman topology, which is defined under stronger assumptions
on the semilattice of idempotents and is particularly well suited for
metrizability questions.

We first recall the basic domain-theoretic notions that will be used.

\begin{definition}\cite[Definition~I-1.1]{Gierzetal.}\label{def:way-below}
Let $(P,\leq)$ be a poset. For $x,y\in P$, one says that $x$ is
\emph{way below} $y$, and writes $x\ll y$, if for every nonempty up-directed
subset $D\subseteq P$ such that $\sup D$ exists and $y\leq \sup D$, there
exists $d\in D$ with $x\leq d$.
\end{definition}

\begin{lemma}\cite[Proposition~I-1.2]{Gierzetal.}\label{lem:way-below-basic}
Let $(P,\leq)$ be a poset. Then:
\begin{enumerate}
\item if $x\ll y$, then $x\leq y$;
\item if $P$ has a minimum element $0$, then $0\ll x$ for every $x\in P$.
\end{enumerate}
\end{lemma}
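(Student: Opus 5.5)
Both items follow directly from Definition~\ref{def:way-below}, so the plan is to unwind the definition in each case and exhibit the required element of the directed set explicitly.

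For (1), assume $x\ll y$. Apply the definition to the one-point set $D=\{y\}$. It is nonempty and trivially up-directed, and $\sup D=y$ exists. Since $y\leq \sup D$ holds, the way-below condition produces some $d\in D$ with $x\leq d$. The only element of $D$ is $y$, so $x\leq y$.

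For (2), let $0$ be the minimum of $P$, fix $x\in P$, and let $D\subseteq P$ be any nonempty up-directed set with $\sup D$ existing and $x\leq\sup D$. Because $D$ is nonempty, we may choose some $d\in D$. Since $0$ is the minimum, $0\leq d$, which is exactly what the definition requires. Hence $0\ll x$.

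No step presents a real obstacle. The only point needing care is the nonemptiness requirement on $D$ in Definition~\ref{def:way-below}. In (1) it ensures that the singleton $\{y\}$ is an admissible directed set. In (2) it is what guarantees that a witness $d$ exists at all; without it the empty set would be a counterexample to (2).
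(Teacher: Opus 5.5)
Your proof is correct. The singleton $\{y\}$ is an admissible directed set, which gives (1), and nonemptiness of $D$ supplies the witness $d\geq 0$ for (2). The paper gives no proof of its own and simply cites Gierz et al.\ (Proposition~I-1.2), where the argument is this same direct unwinding of the definition.
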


\begin{definition}\label{def:domain}
A poset $(P,\leq)$ is a \emph{dcpo} if every nonempty up-directed subset
$D\subseteq P$ has a supremum. A dcpo is a \emph{domain} if it is continuous,
that is, for every $x\in P$ one has
\[
x=\sup\{y\in P:y\ll x\}.
\]
\end{definition}

\begin{definition}\cite[Chapter~III]{Gierzetal.}\label{def:lawson-topology}
Let $P$ be a domain. The \emph{Lawson topology} on $P$ is the topology
generated by the sets
\[
(\twoheaduparrow x)\setminus(\uparrow F),
\qquad x\in P,\quad F\in\mathcal P_{\mathrm{fin}}(P),
\]
where
\[
\twoheaduparrow x:=\{y\in P:x\ll y\},
\qquad
\uparrow F:=\bigcup_{f\in F}\uparrow f.
\]
\end{definition}

\begin{definition}\cite[Section~2]{Lawson}\label{def:lawson-semilattice}
A Hausdorff topological semilattice $S$ is called a \emph{Lawson semilattice} if
every point of $S$ has a neighbourhood basis consisting of open
subsemilattices.
\end{definition}

\begin{definition}[\cite{Bowman}]\label{def:perfect-semilattice}
A \emph{perfect semilattice} is a compact Hausdorff Lawson semilattice.
\end{definition}

The following characterization connects perfect semilattices with the Lawson
topology.

\begin{theorem}\cite[Theorem~VI-3.4]{Gierzetal.}\label{th:perfect-semilattice-characterization}
For a topological semilattice $S$, the following are equivalent:
\begin{enumerate}
\item $S$ is a perfect semilattice;
\item $S$ is a complete continuous semilattice, and its topology is the Lawson
topology.
\end{enumerate}
\end{theorem}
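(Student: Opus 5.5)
We prove the two implications separately, using only the order-theoretic definitions recalled above (Definitions~\ref{def:way-below}--\ref{def:perfect-semilattice}) and Lemmas~\ref{lem:hausdorff-semilattice}--\ref{lem:compact-semilattice-minimum}. The guiding idea in both directions is that, in a compact semilattice, the open subsemilattices play the role of the sets $\twoheaduparrow x$ of the Lawson topology.

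\emph{(1)$\Rightarrow$(2), completeness.} Let $S$ be perfect. By Lemma~\ref{lem:hausdorff-semilattice} the order is closed. Let $D$ be a nonempty up-directed set and view it as a net. By compactness it has a cluster point $s$.
\begin{itemize}
\item Each $d\in D$ satisfies $d\le d'$ for all later $d'$, and $\uparrow d$ is closed, so $d\le s$.
\item For any upper bound $u$ of $D$, the closed set $\downarrow u$ contains $D$, so $s\le u$.
\end{itemize}
Hence $s=\sup D$; by uniqueness of the cluster point, $D\to\sup D$. Dually, every nonempty set has an infimum: it is the limit of the down-directed net of its finite meets, using Lemma~\ref{lem:compact-semilattice-minimum}. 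Taking $\sup S$ gives a top element, so $S$ is a complete lattice.

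\emph{(1)$\Rightarrow$(2), continuity.} Fix $x\in S$ and an open subsemilattice $U\ni x$, and put $u_U:=\inf U$.
\begin{itemize}
\item \emph{$u_U\ll x$.} Let $D$ be up-directed with $x\le\sup D$. Then $dx\to(\sup D)x=x$ by continuity of multiplication. So eventually $dx\in U$, whence $u_U\le dx\le d$.
\item \emph{$x=\sup_U u_U$.} Since $U$ is a subsemilattice, $u_U$ lies in the closure of $U$. As $U$ shrinks over the neighbourhood basis, $u_U\to x$.
\end{itemize}
Together these give $x=\sup_U u_U\le\sup\{y:y\ll x\}\le x$, so $S$ is a continuous complete semilattice.

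\emph{(1)$\Rightarrow$(2), the topology.} It suffices to show that the Lawson topology is coarser than the given one. Indeed, the Lawson topology of a continuous lattice is Hausdorff, and a continuous bijection from a compact space onto a Hausdorff space is a homeomorphism.
\begin{itemize}
\item \emph{Each $\uparrow F$ is closed.} This holds because the order is closed.
\item \emph{Each $\twoheaduparrow x$ is open.} This is the main obstacle. Let $x\ll y$ and interpolate $x\ll w\ll y$. Since $y$ is the directed supremum of the elements $u_U$ (over open subsemilattices $U\ni y$), the relation $w\ll y$ yields some $U$ with $w\le u_U$. Then every $z\in U$ satisfies $w\le z$, hence $x\ll z$. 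So $U\subseteq\twoheaduparrow x$.
\end{itemize}

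\emph{(2)$\Rightarrow$(1).} Here we invoke the standard facts about the Lawson topology on a complete continuous lattice: it is compact Hausdorff and the meet is jointly continuous. So it remains to produce small open subsemilattices. By interpolation, every point $y$ has a Scott-neighbourhood basis of Scott-open filters $\Phi$; these are built as unions $\bigcup_n\twoheaduparrow x_n$ along an interpolating chain $x\ll\cdots\ll x_2\ll x_1\ll y$. Consequently the Lawson-basic neighbourhoods of $y$ may be taken of the form $\Phi\setminus\uparrow F$. Each such set is a subsemilattice: $\Phi$ is a filter, and $S\setminus\uparrow F$ is a down-set and hence closed under meets with arbitrary elements. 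This gives the required basis of open subsemilattices, so $S$ is a compact Hausdorff Lawson semilattice, that is, perfect.
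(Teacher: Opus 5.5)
Your argument is the standard proof of the fundamental theorem of compact semilattices in Gierz et al.; the paper gives no proof of its own and only cites that result. Most of your steps are sound:
\begin{itemize}
\item the closed order yields nonempty infima and directed suprema, with $D\to\sup D$;
\item continuity of multiplication gives $\inf U\ll x$;
\item small semilattices give $x=\sup_U\inf U$;
\item interpolation makes each $\twoheaduparrow x$ open, and the compact-to-Hausdorff identity argument then identifies the topologies;
\item Scott-open filters $\Phi$ give the open subsemilattices $\Phi\setminus\uparrow F$.
\end{itemize}

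One sentence, however, is false: ``Taking $\sup S$ gives a top element, so $S$ is a complete lattice.'' Your compactness argument only produces suprema of up-directed sets, and $S$ itself need not be up-directed. For example, the discrete semilattice $\{0,a,b\}$ with $ab=0$ is compact Hausdorff, and every singleton is an open subsemilattice, so it is perfect. Yet it has no top. What you actually get is a complete semilattice (nonempty infima and directed suprema). That is exactly what (2) asks for, so the sentence should simply be deleted.

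Deleting it means you can no longer quote continuous-\emph{lattice} results, as you do in both directions.
\begin{itemize}
\item In (1)$\Rightarrow$(2) you need Hausdorffness of the Lawson topology on a continuous domain. This is easy: if $x\not\le y$, pick $z\ll x$ with $z\not\le y$; then $\twoheaduparrow z$ and $S\setminus\uparrow z$ separate $x$ and $y$.
\item In (2)$\Rightarrow$(1) the hypothesis gives only a complete continuous semilattice. Either cite the semilattice versions of the facts, or adjoin a top. If $S$ has no top, $S^\top=S\cup\{\top\}$ is a continuous lattice. Moreover $\top\ll\top$, because a directed subset of $S$ has its supremum in $S$. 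Hence $\{\top\}$ is Lawson-open, $S$ is Lawson-closed in $S^\top$, and $S$ carries the subspace Lawson topology. It is therefore compact Hausdorff.
\item Also in (2)$\Rightarrow$(1), you need not invoke joint continuity of the meet. The statement concerns a topological semilattice, so this is already a hypothesis. The only substantial imported input is Lawson compactness.
\end{itemize}

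Finally, before you use interpolation, record that $\{y:y\ll x\}$ is directed. This holds because the directed family $\{\inf U\}$ is cofinal in it: $y\ll x=\sup_U\inf U$ gives $y\le\inf U$ for some $U$. Interpolation in a continuous dcpo depends on this directedness.
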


\begin{corollary}\label{cor:inf-open}
Let $S$ be a perfect semilattice. Then for every nonempty open subset
$U\subseteq S$, the infimum $\inf U$ exists.
\end{corollary}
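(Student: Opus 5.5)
Let $S$ be a perfect semilattice and $U\subseteq S$ a nonempty open set. The approach is to use the completeness furnished by Theorem~\ref{th:perfect-semilattice-characterization} and not to rely on openness at all.

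By that theorem, $S$ is a complete continuous semilattice. In the terminology of \cite{Gierzetal.}, a complete semilattice is one in which every nonempty subset has an infimum (and every up-directed set has a supremum). I will state this convention explicitly, since the corollary is then immediate: $U$ is a nonempty subset, so $\inf U$ exists.

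For a more self-contained argument independent of that convention, I would argue topologically. For each finite nonempty $F\subseteq U$, let $m_F$ be the product of the elements of $F$. This is the infimum of $F$ in the meet-semilattice order, as recalled before Lemma~\ref{lem:hausdorff-semilattice}. The family $D=\{m_F\}$ is down-directed, because $m_F m_{F'}=m_{F\cup F'}$.

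Now consider the closed sets $C_F:=\overline{\{m_{F'}:F'\supseteq F\}}\cap\,\downarrow F$. Here $\downarrow F=\bigcap_{x\in F}\downarrow x$ is closed by Lemma~\ref{lem:hausdorff-semilattice}. Every $m_{F'}$ with $F'\supseteq F$ lies in $\downarrow F$, so $C_F$ is simply the closure of $\{m_{F'}:F'\supseteq F\}$, and it is nonempty. These sets have the finite intersection property, since $C_{F_1}\cap\dots\cap C_{F_k}\supseteq C_{F_1\cup\dots\cup F_k}$. By compactness there is a point $z\in\bigcap_F C_F$.

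Two facts then show $z=\inf U$.
\begin{itemize}
\item $z$ is a lower bound of $U$: for each $x\in U$, $z\in C_{\{x\}}\subseteq\downarrow x$.
\item Any lower bound $w$ of $U$ satisfies $w\le m_F$ for all $F$, so $\{m_F\}\subseteq\uparrow w$. This set is closed by Lemma~\ref{lem:hausdorff-semilattice}, so $z\in\uparrow w$, i.e.\ $w\le z$.
\end{itemize}

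The only delicate point is that closedness of the principal up- and down-sets requires Hausdorffness. That is supplied by the definition of a perfect semilattice (Definition~\ref{def:perfect-semilattice}).
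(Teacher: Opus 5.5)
Your first argument is exactly the paper's proof: Theorem~\ref{th:perfect-semilattice-characterization} makes $S$ a complete semilattice, so every nonempty subset, open or not, has an infimum. Your backup compactness argument is also correct and gives slightly more: infima of all nonempty subsets exist in any compact Hausdorff topological semilattice, without using the Lawson property or the openness of $U$.
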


\begin{proof}
By Theorem~\ref{th:perfect-semilattice-characterization}, $S$ is complete, so
every nonempty subset of $S$ admits an infimum.
\end{proof}

We now define Bowman's basic sets. Let $S$ be as in subsection \ref{subsec: Yeager}, and
assume moreover that $(E(S),\mathcal T_{E(S)})$ is a perfect semilattice.

\begin{definition}[Bowman basic sets]
\label{def:bowman-basic}
For every nonempty open subset $U\subseteq E(S)$ and every open subset
$V\subseteq G_{\inf U}$, define
\[
W_B(U,V):=W\bigl(U,(\inf U,V)\bigr),
\]
where $W(U,(e,V))$ is as in equality \eqref{eq:W-yeager}.
\end{definition}

\begin{theorem}[Bowman \cite{Bowman}]\label{th:bowman}
Assume that:
\begin{enumerate}
\item $(E(S),\mathcal T_{E(S)})$ is a perfect semilattice;
\item each maximal subgroup $G_e$ is a compact Hausdorff topological group;
\item the functor $\mathscr G\colon E(S)\to\mathcal G$ is inverse-limit
preserving.
\end{enumerate}
Then the family
\[
\mathcal B_B
:=
\{W_B(U,V):U\subseteq E(S)\text{ open nonempty},\ V\subseteq G_{\inf U}\text{
open}\}
\]
is a basis for a compact Hausdorff topology $\mathcal T_B$ on $S$.
Moreover, with respect to $\mathcal T_B$, $S$ becomes a topological Clifford semigroup.
\end{theorem}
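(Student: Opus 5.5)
The plan is to check the four ingredients in turn: $\mathcal B_B$ is a basis, $\mathcal T_B$ is Hausdorff, $\mathcal T_B$ is compact, and the operations are continuous. Two facts are used throughout. First, $E(S)$ is a continuous lattice carrying the Lawson topology (Theorem~\ref{th:perfect-semilattice-characterization}). Second, functoriality gives $\varphi_{g,e}=\varphi_{f,e}\circ\varphi_{g,f}$.

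Write $a=\inf U$. Since $U\subseteq\uparrow a$, the definition reads $W_B(U,V)=\{x\in G_f: f\in U,\ \varphi_{f,a}(x)\in V\}$.

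\emph{Basis.} The sets cover $S$: by Lemma~\ref{lem:compact-semilattice-minimum}, $E(S)$ has a minimum $0$, and $W_B(E(S),G_0)=S$.

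For the intersection property, let $x\in G_e$ lie in $W_B(U_1,V_1)\cap W_B(U_2,V_2)$, and put $a_i=\inf U_i$.
\begin{itemize}
\item Since $E(S)$ is a Lawson semilattice, choose an open subsemilattice $U$ with $e\in U\subseteq U_1\cap U_2$, and put $a=\inf U$. Then $a_i\le a\le e$.
\item Set $V=\varphi_{a,a_1}^{-1}(V_1)\cap\varphi_{a,a_2}^{-1}(V_2)$. This is open in $G_a$ and contains $\varphi_{e,a}(x)$.
\item By functoriality, $x\in W_B(U,V)\subseteq W_B(U_1,V_1)\cap W_B(U_2,V_2)$.
\end{itemize}

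\emph{Hausdorff.} Let $x\in G_e$ and $y\in G_f$ be distinct.

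If $e\neq f$, take disjoint open sets $U_1\ni e$ and $U_2\ni f$ in $E(S)$. The sets $W_B(U_i,G_{\inf U_i})$ lie over $U_i$, so they are disjoint.

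If $e=f$, use that $\mathscr G$ preserves inverse limits. I read this as saying that $G_e$ is the inverse limit of the groups $G_b$ with $b\ll e$, along the maps $\varphi$.
\begin{itemize}
\item Pick $b\ll e$ with $\varphi_{e,b}(x)\neq\varphi_{e,b}(y)$.
\item Take the Lawson-open neighbourhood $U=\twoheaduparrow b$ of $e$, and let $a=\inf U$. Then $b\le a\le e$, and $\varphi_{e,b}=\varphi_{a,b}\circ\varphi_{e,a}$ forces $\varphi_{e,a}(x)\neq\varphi_{e,a}(y)$.
\item Separate these two points in $G_a$ by disjoint open $V_1,V_2$. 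Then $W_B(U,V_1)$ and $W_B(U,V_2)$ are disjoint neighbourhoods of $x$ and $y$.
\end{itemize}

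\emph{Inversion.} This is immediate from $W_B(U,V)^{-1}=W_B(U,V^{-1})$.

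\emph{Multiplication.} Let $x\in G_e$ and $y\in G_f$, and let $W_B(U,V)$ be a basic neighbourhood of $xy\in G_{ef}$, with $a=\inf U$.
\begin{itemize}
\item By continuity of the semilattice operation, choose open subsemilattices $U_1\ni e$ and $U_2\ni f$ with $U_1U_2\subseteq U$.
\item Put $a_i=\inf U_i$. Then $a_1a_2\le a$, since $a_1a_2=\inf(U_1U_2)$ (the infimum of a product set in a complete semilattice).
\item The product of $W_B(U_1,V_1)$ and $W_B(U_2,V_2)$ is governed at level $a_1a_2$ by the continuous maps $\varphi_{a_i,a_1a_2}$ and the group multiplication in $G_{a_1a_2}$.
\end{itemize}
There is a difficulty here: the target set $V$ lives at the higher level $a$, and $\varphi_{a,a_1a_2}$ need not be injective. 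So I expect I must instead shrink $U_1,U_2$ so that $a_1a_2$ is pushed arbitrarily close to $a$, again using the inverse-limit property. This step needs care.

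\emph{Compactness.} This is the main obstacle. I would argue with nets.
\begin{itemize}
\item Let $(x_i)$ be a net with $x_i\in G_{e_i}$. Since $E(S)$ is compact, pass to a subnet with $e_i\to e$.
\item For each $b\ll e$, the set $\twoheaduparrow b$ is a neighbourhood of $e$. So eventually $e_i\ge b$, and the points $\varphi_{e_i,b}(x_i)$ are defined in the compact group $G_b$.
\item By Tychonoff in $\prod_{b\ll e}G_b$, pass to a further subnet on which $\varphi_{e_i,b}(x_i)\to y_b$ for every $b\ll e$.
\item By continuity of the bonding maps, the family $(y_b)$ is compatible. The inverse-limit hypothesis then yields $y\in G_e$ with $\varphi_{e,b}(y)=y_b$.
\end{itemize}

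It remains to show $x_i\to y$. Let $W_B(U,V)$ be a basic neighbourhood of $y$. The key domain-theoretic fact is that for a Lawson neighbourhood $U$ of $e$ in a continuous lattice, one has $\inf U\ll e$. Granting this, $a=\inf U$ is one of the indices above. Eventually $e_i\in U$, and $\varphi_{e_i,a}(x_i)\to\varphi_{e,a}(y)\in V$, so $x_i\in W_B(U,V)$ eventually.

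Verifying $\inf U\ll e$, for instance by reducing to basic sets $\twoheaduparrow c\setminus\uparrow F$ via Definition~\ref{def:lawson-topology}, is where I expect the real work. The same fact also feeds the remaining step in the continuity of multiplication.
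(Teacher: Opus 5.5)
The paper gives no proof of this result; it is quoted from Bowman, so your argument has to stand on its own. Your treatment of the basis, Hausdorffness and inversion is sound. The compactness argument works once one domain-theoretic fact is supplied (second paragraph). The genuine gap is continuity of multiplication, and the obstacle you describe there comes from an inequality written the wrong way round.

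From $U_1U_2\subseteq U$ one gets $a=\inf U\le \inf(U_1U_2)=a_1a_2$, not $a_1a_2\le a$. Directly: for $e'\in U_1$ and any $f'\in U_2$ one has $e'f'\in U$, so $a\le e'f'\le e'$; hence $a\le a_1$, and likewise $a\le a_2$. So $V$ lives at a level below both $a_1$ and $a_2$, and neither injectivity of bonding maps nor the inverse-limit hypothesis is needed. By continuity of multiplication in $G_a$, pick open $V_1',V_2'\subseteq G_a$ with $\varphi_{e,a}(x)\in V_1'$, $\varphi_{f,a}(y)\in V_2'$ and $V_1'V_2'\subseteq V$, and set $V_i:=\varphi_{a_i,a}^{-1}(V_i')$. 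Functoriality gives $x\in W_B(U_1,V_1)$ and $y\in W_B(U_2,V_2)$. For $x'\in G_{e'}\cap W_B(U_1,V_1)$ and $y'\in G_{f'}\cap W_B(U_2,V_2)$ one has $e'f'\in U$ and
\[
\varphi_{e'f',a}(x'y')=\varphi_{e',a}(x')\,\varphi_{f',a}(y')\in V_1'V_2'\subseteq V,
\]
so $W_B(U_1,V_1)\,W_B(U_2,V_2)\subseteq W_B(U,V)$. The repair you sketch (shrinking $U_1,U_2$ to push $a_1a_2$ towards $a$) targets a difficulty that does not exist and, as written, leaves the step unproved. Subsemilattice neighbourhoods are not needed here or in the basis step.

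The fact you defer in the compactness step, $\inf U\ll e$ for every Lawson-open $U\ni e$, is true and has a short proof. Take a basic neighbourhood $\twoheaduparrow c\setminus\uparrow F\subseteq U$ of $e$, and interpolate $c\ll z\ll e$. Then $z\in\twoheaduparrow c$, and $z\notin\uparrow F$ because $z\le e$ and $e\notin\uparrow F$. So $z\in U$, hence $\inf U\le z\ll e$, whence $\inf U\ll e$. With this your net argument is complete. A minor point: $E(S)$ is a complete continuous semilattice, not necessarily a lattice, since it may lack a top element; nothing in your argument uses joins, so this does not affect the proof.
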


The Bowman topology agrees with the Yeager topology in the compact Hausdorff
setting whenever the semilattice of idempotents is Lawson.

\begin{corollary}\label{cor:yeager-bowman}
Let $S$ be a compact Hausdorff topological Clifford semigroup. Endow $E(S)$ and
each maximal subgroup $G_e$ with the subspace topologies induced by $S$. If
$E(S)$ is a Lawson semilattice, then
\[
\mathcal T_S=\mathcal T_Y=\mathcal T_B.
\]
Moreover, if $E(S)$ is first countable, then also
\[
\mathcal T_S=\mathcal T_Y=\mathcal T_{Y_1}=\mathcal T_B.
\]
\end{corollary}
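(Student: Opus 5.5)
The first equality $\mathcal T_S=\mathcal T_Y$ is exactly part~(4) of Theorem~\ref{th:yeager}, so the work is to show $\mathcal T_S=\mathcal T_B$. Theorem~\ref{th:yeager}(1)--(3) gives that $E(S)$, with the subspace topology, is a compact Hausdorff topological semilattice, that each $G_e$ is a compact Hausdorff topological group, and that $\mathscr G$ is inverse-limit preserving. Since $E(S)$ is Lawson by hypothesis, it is a perfect semilattice (Definition~\ref{def:perfect-semilattice}). Hence all hypotheses of Theorem~\ref{th:bowman} hold, and $\mathcal T_B$ is a compact Hausdorff topology on $S$.

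I will then show that $\mathcal T_B\subseteq\mathcal T_S$, by proving that every basic set $W_B(U,V)$ is open in $S$. Put $e=\inf U$. Take $x\in W_B(U,V)$ with $x\in G_f$, $f\in U$, $ex\in V$. The set $\pi^{-1}(U)$ is open, because $\pi$ is continuous (Lemma~\ref{lem:pi-continuous}). The map $y\mapsto ey$ is continuous on $S$ and sends $\pi^{-1}(U)$ into $G_e$: for $y\in G_g$ with $g\in U$ we have $e\le g$, so $ey\in G_{eg}=G_e$. Since $V$ is open in $G_e$, we can write $V=G_e\cap O$ with $O$ open in $S$. Then
\[
W_B(U,V)=\pi^{-1}(U)\cap\{y: ey\in O\},
\]
which is open in $\mathcal T_S$. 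Here I need $e\le g$ for every $g\in U$, which holds by the definition of $\inf U$, and I also need $e\in E(S)$, which holds by Corollary~\ref{cor:inf-open}.

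Finally, the identity $(S,\mathcal T_S)\to(S,\mathcal T_B)$ is a continuous bijection from a compact space onto a Hausdorff space, so it is a homeomorphism. This gives $\mathcal T_S=\mathcal T_B$. The step I expect to need the most care is the description of $W_B(U,V)$ as a $\mathcal T_S$-open set. That step relies on $\inf U$ being an element of $E(S)$ below all of $U$; $\inf U$ itself need not lie in $U$, and this causes no problem.

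For the first-countable statement, Theorem~\ref{th:yeager}(5) gives $\mathcal T_S=\mathcal T_{Y_1}$. Alternatively, Remark~\ref{rem:yeager-inclusion} applies, since $\mathcal T_Y=\mathcal T_S$ is Hausdorff. Combining this with the equalities already proved gives the chain $\mathcal T_S=\mathcal T_Y=\mathcal T_{Y_1}=\mathcal T_B$.
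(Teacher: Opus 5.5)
Your proof is correct, but it reaches $\mathcal T_B=\mathcal T_S$ by a different route from the paper.

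The paper observes that Bowman's construction gives a compact Hausdorff Clifford semigroup topology on the same semigroup. It then appeals to Yeager's reconstruction theorem (Theorem~\ref{th:yeager}) again. That step tacitly needs $\mathcal T_B$ to induce on $E(S)$ and on each $G_e$ the same topologies as $\mathcal T_S$; otherwise the two Yeager topologies are built from different data. The paper does not check this, and it also leaves the first-countable clause implicit.

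You instead prove the inclusion $\mathcal T_B\subseteq\mathcal T_S$ directly. You write each basic set as $W_B(U,V)=\pi^{-1}(U)\cap\{y\in S: ey\in O\}$ with $e=\inf U$. This uses only the continuity of $\pi$, the continuity of left translation by $e$, and the completeness of the perfect semilattice $E(S)$. You then finish with the compact-to-Hausdorff argument.

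Your version is self-contained and needs from Theorem~\ref{th:bowman} only that $\mathcal T_B$ is Hausdorff. It avoids the unverified compatibility of subspace topologies, and in fact yields that compatibility as a consequence. The paper's version is shorter but relies on the uniqueness in Yeager's theorem as a black box. Your handling of the first-countable part via Theorem~\ref{th:yeager}(5) is also fine.
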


\begin{proof}
By Theorem~\ref{th:yeager}, the topology of $S$ coincides with the Yeager
topology, so
\[
\mathcal T_S=\mathcal T_Y.
\]
Assume now that $E(S)$ is a Lawson semilattice. Since $E(S)$ is compact
Hausdorff, it is a perfect semilattice by
Definition~\ref{def:perfect-semilattice}. Hence the hypotheses of
Theorem~\ref{th:bowman} are satisfied, and Bowman’s construction yields a
compact Hausdorff Clifford semigroup topology $\mathcal T_B$ on the same
underlying Clifford semigroup and hence, by Theorem \ref{th:yeager} $\mathcal T_B = \mathcal T_S$.
%
\end{proof}

\section{Metrizability}\label{sec:metric}

The metrizability of topological Clifford semigroups has been studied in
\cite{MetrizabilityClifford,BanakhCompact}. In this section we first discuss
the case of strong semilattices of topological groups, and then turn to the
compact Hausdorff setting, where we construct an explicit compatible metric for
the Bowman topology.

\subsection{Strong semilattices of topological groups}

Throughout this subsection, $S$ denotes a topological Clifford semigroup which
is a strong semilattice of topological groups. The following is the natural analogue of the Birkhoff--Kakutani theorem for
topological groups.

\begin{proposition}\label{prop:bk-semilattice}
Let $S$ be a strong semilattice of topological groups. Then $S$ is metrizable
if and only if it is Hausdorff and first countable.
\end{proposition}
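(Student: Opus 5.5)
The plan is to use Proposition~\ref{prop:mp-equivalences}, which says that $S$ is homeomorphic to the topological sum $\bigsqcup_{e\in E(S)} G_e$, with each $G_e$ open in $S$. The necessity direction is immediate, since every metrizable space is Hausdorff and first countable. For sufficiency, we assume $S$ is Hausdorff and first countable. Each $G_e$, with the subspace topology, is then a Hausdorff, first countable topological group. By the Birkhoff--Kakutani theorem, $G_e$ admits a compatible metric $d_e$, which may be taken left-invariant, though invariance is not needed here.

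Next we glue these metrics. Replace $d_e$ by the bounded metric $d_e' := \min\{d_e,1\}$, which induces the same topology. On $S$ define
\[
d(x,y) :=
\begin{cases}
d_e'(x,y), & x,y\in G_e \text{ for some } e\in E(S),\\
1, & x\in G_e,\ y\in G_f,\ e\neq f.
\end{cases}
\]
Symmetry and positivity are clear. For the triangle inequality, the case where all three points lie in one $G_e$ reduces to that of $d_e'$. If the points do not all lie in one group, the right-hand side $d(x,z)+d(z,y)$ contains at least one term equal to $1$. Since every value of $d$ is at most $1$, the inequality holds.

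Finally we check that $d$ induces the topology of $S$. For $x\in G_e$ and $r\le 1$, the open ball $B_d(x,r)$ equals the $d_e'$-ball $B_{d_e'}(x,r)\subseteq G_e$. Hence the metric topology is exactly the disjoint union topology of the metric spaces $(G_e,d_e)$. By item (6) of Proposition~\ref{prop:mp-equivalences}, this disjoint union topology is the original topology of $S$.

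The only genuine input is Birkhoff--Kakutani applied to each $G_e$. For this we must check that Hausdorffness and first countability pass from $S$ to the subspaces $G_e$, which is routine since both properties are hereditary. The remaining steps are formal, so I expect no real obstacle. The one point needing care is the truncation at $1$, which is what makes the constant inter-component distance compatible with the triangle inequality.
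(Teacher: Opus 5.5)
Your proposal is correct and follows the paper's proof: apply Birkhoff--Kakutani to each $G_e$, then use that $S$ is the topological sum of the $G_e$. Your glued metric (truncate at $1$, distance $1$ between distinct groups) is exactly the one the paper records in Remark~\ref{rem:metric-disjoint-sum}; you additionally verify the triangle inequality and the topology check, which the paper leaves implicit.
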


\begin{proof}
The forward implication is clear.
Conversely, if $S$ is Hausdorff and first countable, then each maximal subgroup
$G_e$ is Hausdorff and first countable in the subspace topology. Since $G_e$ is
a topological group, it is metrizable by the Birkhoff--Kakutani theorem.
Because $S$ is the topological sum
\[
S=\bigsqcup_{e\in E(S)} G_e,
\]
it follows that $S$ is metrizable.
\end{proof}

A concrete compatible metric can be described as follows.

\begin{remark}\label{rem:metric-disjoint-sum}
Assume that $S=\bigsqcup_{e\in E(S)} G_e$ is a strong semilattice of
topological groups, and for each $e\in E(S)$ let $d_e$ be a compatible metric
on $G_e$ such that $d_e\leq 1$. Since $E(S)$ is discrete in $S$ by
Proposition~\ref{prop:mp-equivalences}, define
\[
d\bigl((e,s),(f,t)\bigr):=
\begin{cases}
d_e(s,t), & \text{if } e=f,\\[4pt]
1, & \text{if } e\neq f.
\end{cases}
\]
Then $d$ is a compatible metric on $S$.
\end{remark}

We next record a general lemma on topological sums. Recall that a subset $A\subseteq X$ of a topological space is a $G_{\delta}$ set if $A = \displaystyle\bigcap_{n \ge 1} U_{n}$ with $U_n$ open in $X$ and the intersection is countable.

\begin{lemma}\label{lem:gdelta-sum}
Let $\{X_i:i\in I\}$ be a family of topological spaces, and let
\[
X=\bigsqcup_{i\in I} X_i
\]
be their topological sum. For each $i\in I$, choose a point $x_i\in X_i$, and
set
\[
E:=\{x_i:i\in I\}\subseteq X.
\]
Then $E$ is a $G_\delta$-subset of $X$ if and only if, for every $i\in I$, the
singleton $\{x_i\}$ is a $G_\delta$-subset of $X_i$.
\end{lemma}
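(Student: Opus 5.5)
The plan is to prove the two implications separately. Both rest on one structural fact about topological sums: a subset $A\subseteq X=\bigsqcup_{i} X_i$ is open if and only if $A\cap X_i$ is open in $X_i$ for every $i$. In particular, each $X_i$ is clopen in $X$.

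For the forward implication, suppose $E=\bigcap_{n\ge 1}U_n$ with each $U_n$ open in $X$. Fix $i\in I$ and set $V_n:=U_n\cap X_i$. Each $V_n$ is open in $X_i$, because $X_i$ carries the subspace topology and $U_n$ is open. Then
\[
\bigcap_{n\ge 1} V_n = E\cap X_i=\{x_i\},
\]
using that $x_j\in X_j$ and the $X_j$ are pairwise disjoint, so the only point of $E$ lying in $X_i$ is $x_i$. Hence $\{x_i\}$ is a $G_\delta$-subset of $X_i$.

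For the converse, suppose that for each $i$ we can write $\{x_i\}=\bigcap_{n\ge1}V^i_n$ with $V^i_n$ open in $X_i$. This step uses the axiom of choice to pick such a sequence for every $i$ at once. Define
\[
U_n:=\bigcup_{i\in I}V^i_n.
\]
By the structural fact, $U_n$ is open in $X$, since $U_n\cap X_i=V^i_n$. Now take a point $y\in\bigcap_n U_n$. It lies in a unique $X_i$, so $y\in V^i_n$ for all $n$, which forces $y=x_i\in E$. Conversely, every $x_i$ lies in every $U_n$. Therefore $E=\bigcap_n U_n$, and $E$ is a $G_\delta$-subset of $X$.

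The only point needing care is the converse. It requires the same countable index $n$ across all $i$ simultaneously, which is obtained by enumerating each countable family by $\mathbb N$, repeating sets if a family is finite. It also requires the simultaneous choice of these sequences over all $i\in I$. Everything else is routine bookkeeping with disjointness and openness in the sum.
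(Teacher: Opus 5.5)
Your proof is correct and follows the paper's argument essentially verbatim: intersect each $U_n$ with $X_i$ for the forward direction, and set $U_n:=\bigcup_{i\in I}V^{(i)}_n$ for the converse, using that the $X_i$ are pairwise disjoint and open in the sum. Your remarks on choosing the sequences simultaneously and re-indexing finite families are harmless details that the paper leaves implicit.
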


\begin{proof}
Assume first that
\[
E=\bigcap_{n\geq 1} U_n
\]
for some open subsets $U_n\subseteq X$. Fix $i\in I$. Since $X_i$ is open in
the disjoint union topology,
\[
\{x_i\}=E\cap X_i=\bigcap_{n\geq 1}(U_n\cap X_i),
\]
and each $U_n\cap X_i$ is open in $X_i$. Thus $\{x_i\}$ is a $G_\delta$-subset
of $X_i$.

Conversely, suppose that for each $i\in I$ one has
\[
\{x_i\}=\bigcap_{n\geq 1} V_n^{(i)},
\]
where each $V_n^{(i)}$ is open in $X_i$. Define
\[
U_n:=\bigcup_{i\in I} V_n^{(i)}.
\]
Then each $U_n$ is open in $X$, and for every $i\in I$,
\[
\left(\bigcap_{n\geq 1}U_n\right)\cap X_i
=
\bigcap_{n\geq 1}(U_n\cap X_i)
=
\bigcap_{n\geq 1}V_n^{(i)}
=
\{x_i\}.
\]
Since the $X_i$ are pairwise disjoint and cover $X$, it follows that
\[
\bigcap_{n\geq 1}U_n=E.
\]
Hence $E$ is a $G_\delta$-subset of $X$.
\end{proof}

Applied to Clifford semigroups, this yields:

\begin{lemma}\label{lem:e-gdelta-points}
Let $S$ be a topological Clifford semigroup. Assume that every maximal subgroup
$G_e$ is open in $S$. Then $E(S)$ is a $G_\delta$-subset of $S$ if and only if,
for every $e\in E(S)$, the singleton $\{e\}$ is a $G_\delta$-subset of $G_e$.
\end{lemma}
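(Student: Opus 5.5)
The plan is to reduce the statement to Lemma~\ref{lem:gdelta-sum}, applied to the family $\{G_e\}_{e\in E(S)}$ with the chosen points $x_e:=e$.

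First, since every maximal subgroup $G_e$ is open, Proposition~\ref{prop:mp-equivalences} (implication (4)$\Rightarrow$(6)) shows that the canonical bijection
\[
\bigsqcup_{e\in E(S)} G_e\longrightarrow S
\]
is a homeomorphism. Here each $G_e$ carries its subspace topology. So $S$ is, up to this identification, the topological sum of the $G_e$.

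Second, I identify the subset $E$ of Lemma~\ref{lem:gdelta-sum} with $E(S)$. The only idempotent of the group $G_e$ is its identity $e$, so $E(S)\cap G_e=\{e\}$. Since the $G_e$ partition $S$, this gives
\[
E(S)=\{e:e\in E(S)\}=\{x_e:e\in E(S)\}.
\]
Being a $G_\delta$-subset is a topological property, so it is preserved under the homeomorphism above. Lemma~\ref{lem:gdelta-sum} then yields the statement directly: $E(S)$ is $G_\delta$ in $S$ if and only if each $\{e\}$ is $G_\delta$ in $G_e$.

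There is no real obstacle here. The only point needing care is that the topology on each summand in Lemma~\ref{lem:gdelta-sum} is the subspace topology of $G_e$ in $S$, which is exactly what the homeomorphism of Proposition~\ref{prop:mp-equivalences} provides.

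One could also argue directly, without invoking Lemma~\ref{lem:gdelta-sum}. If $E(S)=\bigcap_n U_n$, intersect each $U_n$ with the open set $G_e$ to get $\{e\}=\bigcap_n (U_n\cap G_e)$. Conversely, given $\{e\}=\bigcap_n V^{(e)}_n$ with each $V^{(e)}_n$ open in $G_e$, hence open in $S$, take $U_n:=\bigcup_e V_n^{(e)}$; then $\bigcap_n U_n=E(S)$.
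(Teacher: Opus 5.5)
Your proposal is correct and follows the paper's proof: the paper likewise uses Proposition~\ref{prop:mp-equivalences} to view $S$ as the topological sum of the $G_e$, observes that $E(S)\cap G_e=\{e\}$, and concludes by Lemma~\ref{lem:gdelta-sum}. Your direct alternative argument is also valid; it simply reproduces the proof of Lemma~\ref{lem:gdelta-sum} in this setting.
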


\begin{proof}
By Proposition~\ref{prop:mp-equivalences}, $S$ is the topological sum
\[
S=\bigsqcup_{e\in E(S)} G_e.
\]
Moreover, $E(S)$ meets each $G_e$ exactly in the singleton $\{e\}$. The claim
therefore follows immediately from Lemma~\ref{lem:gdelta-sum}.
\end{proof}

The following is a convenient reformulation of
\cite[Theorem~3.4]{MetrizabilityClifford} in the present setting.

\begin{proposition}\label{prop:countably-compact-metrizable}
Let $S$ be a countably compact $T_2$ strong semilattice of topological groups.
Then the following are equivalent:
\begin{enumerate}
\item $S$ is metrizable;
\item $E(S)$ is a $G_\delta$-subset of $S$;
\item for every $e\in E(S)$, the singleton $\{e\}$ is a $G_\delta$-subset of
$G_e$.
\end{enumerate}
\end{proposition}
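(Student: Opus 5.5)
The plan is to cycle through the three conditions. Each $G_e$ is open, so Proposition~\ref{prop:mp-equivalences} lets us view $S$ as the topological sum $\bigsqcup_{e\in E(S)}G_e$. Then (2)$\Leftrightarrow$(3) is exactly Lemma~\ref{lem:e-gdelta-points}, with no extra work. It remains to show (1)$\Rightarrow$(2) and (3)$\Rightarrow$(1).

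For (1)$\Rightarrow$(2) we use that every closed subset of a metrizable space is a $G_\delta$. So it suffices to show that $E(S)$ is closed in $S$. Since $S$ is $T_2$, the set $E(S)=\{x\in S: x\cdot x=x\}$ is the preimage of the closed diagonal under the continuous map $x\mapsto (x^2,x)$, hence closed. An even quicker route is available: in a metrizable space every singleton is a $G_\delta$, so each $\{e\}$ is a $G_\delta$ in $G_e$. This gives (3) directly, and then (2) follows from Lemma~\ref{lem:e-gdelta-points}.

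For (3)$\Rightarrow$(1), fix $e\in E(S)$. Because $G_e$ is open, it is clopen; the complement is a union of the other open groups. A closed subset of a countably compact space is countably compact, so $G_e$ is a countably compact Hausdorff topological group in which $\{e\}$ is a $G_\delta$.

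The key step is now the classical fact that such a group is metrizable. Write $\{e\}=\bigcap_n V_n$ with $V_n$ open. Using continuity of the group operations, choose symmetric open neighbourhoods $U_n$ of $e$ with $\overline{U_{n+1}}\subseteq U_{n+1}^2\subseteq U_n\cap V_n$. Then $\bigcap_n U_n=\bigcap_n\overline{U_n}=\{e\}$.

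We claim $\{U_n\}$ is a neighbourhood base at $e$. Suppose not, and let $W$ be an open neighbourhood of $e$ containing no $U_n$. Then the closed sets $\overline{U_n}\setminus W$ are nonempty and decreasing. By countable compactness their intersection is nonempty, but it is contained in $\{e\}\setminus W=\varnothing$, a contradiction. Hence $G_e$ is first countable and Hausdorff, so it is metrizable by Birkhoff--Kakutani.

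Because each $G_e$ is open, first countability and the Hausdorff property transfer from the groups to the topological sum $S$. Proposition~\ref{prop:bk-semilattice} (equivalently Remark~\ref{rem:metric-disjoint-sum}, with the metrics truncated at $1$) then shows that $S$ is metrizable.

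I expect the main obstacle to be the argument that countable compactness together with $\{e\}$ being a $G_\delta$ forces first countability in $G_e$. One must make sure that the chosen neighbourhoods have closures nested inside the previous ones, which is where regularity of topological groups is used. Alternatively, one can cite \cite[Theorem~3.4]{MetrizabilityClifford} for the full equivalence and reduce the present statement to that theorem via Lemma~\ref{lem:e-gdelta-points}.
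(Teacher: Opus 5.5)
Your proof is correct. For (2)$\Leftrightarrow$(3) and (1)$\Rightarrow$(2) it matches the paper, and you also justify the closedness of $E(S)$ that the paper's ``every closed subset of a metric space is $G_\delta$'' uses tacitly.

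The real difference is how you get back to (1).

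\emph{The paper's route.} It proves (2)$\Rightarrow$(1) by noting that $E(S)$ is discrete, hence metrizable. It then invokes \cite[Theorem~3.4]{MetrizabilityClifford}, a metrization theorem for general countably compact Clifford semigroups.

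\emph{Your route.} You prove (3)$\Rightarrow$(1) directly. Each $G_e$ is clopen, so it is a countably compact Hausdorff group in which $\{e\}$ is a $G_\delta$. The nested symmetric neighbourhoods $\overline{U_{n+1}}\subseteq U_{n+1}^2\subseteq U_n\cap V_n$ give $\bigcap_n\overline{U_n}=\{e\}$. The decreasing-closed-sets characterization of countable compactness then gives a countable base at $e$. Birkhoff--Kakutani and the topological-sum structure finish the argument.

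\emph{What each buys.} The paper's route is shorter and presents the result as a special case of a theorem that also covers non-discrete $E(S)$. Yours is self-contained and uses only classical topological-group facts. It shows that once the maximal subgroups are open, the external theorem is unnecessary: all the content sits in the group-level lemma.

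You could simplify further. In a countably compact space, a partition into nonempty open sets must be finite: otherwise, picking countably many pieces and adding the (open) union of the rest gives a countable open cover with no finite subcover. So under these hypotheses $E(S)$ is finite, and $S$ is just a finite topological sum of countably compact groups.
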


\begin{proof}
The equivalence between (2) and (3) is Lemma~\ref{lem:e-gdelta-points}.
The implication (1)$\Rightarrow$(2) is immediate, since in a metric space every
closed subset is a $G_\delta$-subset.
Assume (2). Since $S$ is a strong semilattice of topological groups, the
subspace $E(S)$ is discrete by Proposition~\ref{prop:mp-equivalences}, hence
metrizable. Therefore \cite[Theorem~3.4]{MetrizabilityClifford} applies and
shows that $S$ is metrizable.
\end{proof}

The next example shows that discreteness of $E(S)$ does not imply that $E(S)$
is a $G_\delta$-subset.

\begin{example}\label{ex:discrete-not-gdelta}
There exists a topological Clifford semigroup $S$ such that $E(S)$ is discrete
in $S$ but is not a $G_\delta$-subset of $S$.
\end{example}

\begin{proof}
Let $E$ be an infinite discrete semilattice, for example
\[
E=(\mathbb N,\min),
\]
endowed with the discrete topology. Let $I$ be an uncountable index set, and
let
\[
T:=\{z\in\mathbb C:|z|=1\}
\]
be the circle group. Consider the compact Hausdorff topological group
\[
H:=T^I
\]
with the product topology. Since $I$ is uncountable, $H$ is not first countable
at any point, and in particular the singleton $\{1_H\}$ is not a
$G_\delta$-subset of $H$.
Now consider the trivial topological Clifford semigroup
\[
S=E\times H.
\]
Its idempotents are precisely the pairs
\[
E(S)=E\times\{1_H\}.
\]
For each $e\in E$, the maximal subgroup at $(e,1_H)$ is
$G_e=\{e\}\times H$.
Since $E$ is discrete, each $G_e$ is open in $S$, and $E(S)$ is discrete in
the subspace topology. However, for each $e\in E$, the singleton
$\{(e,1_H)\}$
is not a $G_\delta$-subset of $G_e\cong H$. Hence, by
Lemma~\ref{lem:e-gdelta-points}, the set $E(S)$ is not a $G_\delta$-subset of
$S$.
\end{proof}

\subsection{An explicit metric for the Bowman topology}

We now focus on the compact Hausdorff setting. Let $S$ be a compact Hausdorff
topological Clifford semigroup, let $E(S)$ be its semilattice of idempotents,
and let
\[
S=\bigsqcup_{e\in E(S)} G_e
\]
be the decomposition into maximal subgroups. Assume:
\begin{enumerate}
\item[\textbf{(B1)}] $E(S)$ is a metrizable perfect semilattice;
\item[\textbf{(B2)}] each maximal subgroup $G_e$ is compact metrizable;
\item[\textbf{(B3)}] the functor
\[
\mathscr G\colon E(S)\to\mathcal G,\qquad e\mapsto G_e,
\]
is inverse-limit preserving.
\end{enumerate}

Under these assumptions, the topology of $S$ coincides with the Bowman topology
by Corollary~\ref{cor:yeager-bowman}. The goal of this subsection is to
construct an explicit compatible metric on $S$.

We begin with a standard domain-theoretic notion.

\begin{definition}\label{def:domain-basis}
Let $(P,\leq)$ be a dcpo. A subset $\mathcal B\subseteq P$ is called a
\emph{basis} if for every $x\in P$ the set
\[
\mathcal B_x:=\{b\in\mathcal B:b\ll x\}
\]
is up-directed and satisfies
\[
x=\sup \mathcal B_x.
\]
\end{definition}

\begin{theorem}\cite[Theorem~III-4.5]{Gierzetal.}\label{th:countable-basis-lawson}
Let $P$ be a domain. Then $P$ admits a countable basis if and only if it is
second countable with respect to the Lawson topology.
\end{theorem}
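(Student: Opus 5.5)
The statement is the equivalence \cite[Theorem~III-4.5]{Gierzetal.}, and I would prove it directly from the definitions. Two standard domain-theoretic facts are used throughout. The first is the interpolation property: $x\ll y$ implies $x\ll w\ll y$ for some $w$. The second is that in a domain the sets $\twoheaduparrow x$ are Scott open and form a base of the Scott topology. The Lawson topology is then generated by these Scott open sets together with the lower-topology sets $P\setminus\uparrow y$, which is the form appearing in Definition~\ref{def:lawson-topology}.

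\emph{($\Rightarrow$)} Let $\mathcal B$ be a countable basis. I claim that the countable family
\[
(\twoheaduparrow b)\setminus\uparrow F,\qquad b\in\mathcal B,\ F\subseteq\mathcal B \text{ finite},
\]
is a base for the Lawson topology. The argument has two parts.

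For the Scott part, let $U$ be Scott open and $x\in U$. Since $x=\sup\mathcal B_x$ with $\mathcal B_x$ directed, and $U$ is Scott open, some $b\in\mathcal B_x$ lies in $U$. Then $x\in\twoheaduparrow b\subseteq\uparrow b\subseteq U$.

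For the lower part, suppose $x\notin\uparrow y$. Since $y=\sup\mathcal B_y$, some $b\in\mathcal B_y$ satisfies $b\not\leq x$. Hence $x\in P\setminus\uparrow b$, and $P\setminus\uparrow b\subseteq P\setminus\uparrow y$ because $b\le y$.

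Intersecting finitely many such sets gives the countable base.

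\emph{($\Leftarrow$)} Let $\{W_n\}$ be a countable Lawson base. The first step, which I expect to be the main obstacle, is to show that the Scott topology is second countable. I would set
\[
O_n:=\operatorname{int}_\sigma(\uparrow W_n),
\]
the Scott interior of $\uparrow W_n$. Let $U$ be Scott open and $y\in U$. By interpolation choose $x\ll x'\ll y$ with $x'\in U$. Since $\twoheaduparrow x$ is Lawson open and contains $x'$, pick $W_n$ with $x'\in W_n\subseteq\twoheaduparrow x$. Then $\twoheaduparrow x'$ is a Scott open set containing $y$ and contained in $\uparrow W_n$, so $y\in O_n$. On the other hand,
\[
O_n\subseteq\uparrow W_n\subseteq\twoheaduparrow x\subseteq U.
\]
Hence $\{O_n\}$ is a countable Scott base.

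The second step builds the basis. For every pair $(m,n)$ for which there exists $x\in O_m$ with $O_n\subseteq\twoheaduparrow x$, choose one such element and call it $b_{m,n}$. Let $\mathcal B$ be the countable set of all the $b_{m,n}$.

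The key property is that for every $z\ll y$ there is $b\in\mathcal B$ with $z\ll b\ll y$. To see this, interpolate $z\ll w\ll y$. Choose $m$ with $w\in O_m\subseteq\twoheaduparrow z$, and choose $n$ with $y\in O_n\subseteq\twoheaduparrow w$. Then $x=w$ witnesses the pair $(m,n)$, so $b_{m,n}$ exists. By construction $b_{m,n}\in O_m\subseteq\twoheaduparrow z$, giving $z\ll b_{m,n}$. Also $y\in O_n\subseteq\twoheaduparrow b_{m,n}$, giving $b_{m,n}\ll y$.

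The basis conditions of Definition~\ref{def:domain-basis} then follow.
\begin{enumerate}
\item $\mathcal B_y$ is directed. Given $b_1,b_2\in\mathcal B_y$, the set $\{u:u\ll y\}$ is directed, so some $z\ll y$ lies above both. The key property gives $b\in\mathcal B_y$ with $z\le b$, and this $b$ is an upper bound of $b_1,b_2$ in $\mathcal B_y$.
\item $\sup\mathcal B_y=y$. Every $z\ll y$ lies below some element of $\mathcal B_y$, and $y=\sup\{z:z\ll y\}$, so $y$ is the least upper bound of $\mathcal B_y$.
\end{enumerate}
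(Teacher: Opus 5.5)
The paper gives no proof of this statement. It is imported from Gierz et al.\ and used only in the direction ``second countable Lawson topology $\Rightarrow$ countable basis'', applied to the compact metrizable semilattice $E(S)$ in Section~\ref{sec:metric}. Your argument is a correct, self-contained proof along standard lines, with one small slip to fix.

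In ($\Rightarrow$) you check that $\{\twoheaduparrow b\}$ and $\{P\setminus\uparrow F\}$, with $b$ and $F$ taken from $\mathcal B$, already form bases of the Scott and lower topologies. In ($\Leftarrow$) you first turn a Lawson base into the Scott base $\operatorname{int}_\sigma(\uparrow W_n)$. You then extract a basis indexed by pairs of basic Scott open sets; its key feature is that it interpolates every relation $z\ll y$. Compared with the bare citation, this shows explicitly how the countable basis $\{b_j\}$ used to build the metric \eqref{eq:bowman-metric} can be produced from a countable base of $E(S)$.

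\textbf{The slip.} In Step~1 of ($\Leftarrow$) you choose $x\ll x'\ll y$ with only $x'\in U$. Later, however, you use $\twoheaduparrow x\subseteq U$, and that needs $x\in U$.

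For instance, take $P=[0,1]$, $U=(1/2,1]$, $y=1$, $x'=0.6$, $x=0.1$ and $W_n=(0.2,0.7)$. Then $x'\in W_n\subseteq\twoheaduparrow x$, but $O_n=(0.2,1]\not\subseteq U$.

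To fix it, reverse the order of the choices. Since $U$ is Scott open and $y=\sup\{z:z\ll y\}$ is a directed supremum, first pick $x\in U$ with $x\ll y$, then interpolate $x\ll x'\ll y$. Now $x'\in U$ automatically, and $O_n\subseteq\twoheaduparrow x\subseteq\uparrow x\subseteq U$.

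\textbf{A minor omission.} When you verify that $\mathcal B_y$ is directed, also state that it is nonempty. The set $\{z:z\ll y\}$ is nonempty, and applying your key property to any of its elements gives an element of $\mathcal B_y$.

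With these two points added, the proof is complete.
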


Fix a compatible metric $\rho$ on $E(S)$, bounded by $1$:
\[
0\leq \rho\leq 1.
\]
Since $E(S)$ is compact metrizable, it is second countable. By
Theorem~\ref{th:perfect-semilattice-characterization} and
Theorem~\ref{th:countable-basis-lawson}, the Lawson topology on $E(S)$ admits a
countable domain basis. Fix such a basis and enumerate it as
\[
\mathcal B=\{b_j\}_{j\geq 1}.
\]

For each $b\in\mathcal B$, choose:
\begin{enumerate}
\item a compatible metric $d_b$ on $G_b$ with $0\leq d_b\leq 1$;
\item a base point $c_b\in G_b$;
\item a countable dense subset
\[
D_b=\{t_{b,k}\}_{k\geq 1}\subseteq G_b.
\]
\end{enumerate}

The next lemma is the key separation fact.

\begin{lemma}\label{lem:separate-via-basis}
Assume that $\mathscr G$ is inverse-limit preserving. Let $e\in E(S)$ and let
$g,h\in G_e$ with $g\neq h$. Then there exists $b\in\mathcal B$ such that
\[
b\ll e
\qquad\text{and}\qquad
\varphi_{e,b}(g)\neq \varphi_{e,b}(h).
\]
\end{lemma}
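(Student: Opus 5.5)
The plan is to unpack what ``inverse-limit preserving'' means for the up-directed set $\mathcal B_e=\{b\in\mathcal B: b\ll e\}$, and then combine it with the fact that $\mathcal B$ is a domain basis. By Definition~\ref{def:domain-basis}, $\mathcal B_e$ is up-directed and $e=\sup\mathcal B_e$. Viewed as a subcategory of $E(S)$, $\mathcal B_e$ is a directed system. The functor $\mathscr G$ (contravariant in the order, via $\varphi_{f,e}$ for $e\le f$) assigns to it the inverse system $\{G_b,\varphi_{b',b}\}_{b\le b'\in\mathcal B_e}$. Inverse-limit preservation in the sense of \cite{Yeager} says exactly that, whenever $e=\sup D$ for an up-directed $D$, the canonical map
\[
\Phi\colon G_e\longrightarrow \varprojlim_{b\in D} G_b,\qquad x\longmapsto (\varphi_{e,b}(x))_{b\in D},
\]
is an isomorphism. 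This map is well defined because $b\le e$ for every $b\in D$ by Lemma~\ref{lem:way-below-basic}(1), and it is compatible since $\varphi_{b',b}\circ\varphi_{e,b'}=\varphi_{e,b}$ by Remark~\ref{rem:clifford-strong-semilattice}(4). We will apply this with $D=\mathcal B_e$. All we need is the injectivity of $\Phi$.

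The argument then proceeds as follows. Suppose $g\neq h$ in $G_e$. Since $\Phi$ is injective, $\Phi(g)\neq\Phi(h)$. Two threads in the inverse limit differ exactly when some coordinate differs, so there is $b\in\mathcal B_e$ with $\varphi_{e,b}(g)\neq\varphi_{e,b}(h)$. Membership $b\in\mathcal B_e$ means precisely $b\ll e$, and this gives the statement.

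The main obstacle is the degenerate or edge cases in matching Yeager's definition with the data at hand. First, $\mathcal B_e$ must be nonempty for it to be a directed set. This holds because $E(S)$ is compact Hausdorff and so has a minimum $0$ by Lemma~\ref{lem:compact-semilattice-minimum}. We have $0\ll e$ by Lemma~\ref{lem:way-below-basic}(2), and $e=\sup\mathcal B_e$ forces $\mathcal B_e\neq\varnothing$ anyway. Second, Yeager's notion might be phrased for arbitrary directed subsets with supremum $e$, or only for ``all'' elements below $e$, that is, for $\downarrow e\setminus\{e\}$ or for the full lower set. If only the full lower set $\{f: f\ll e\}$ or $\downarrow e$ is covered, I would reduce to $\mathcal B_e$ by a cofinality argument. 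Given $f\ll e$, write $e=\sup\mathcal B_e$ with $\mathcal B_e$ directed; the definition of $\ll$ then yields $b\in\mathcal B_e$ with $f\le b$. So $\mathcal B_e$ is cofinal in $\{f:f\ll e\}$, and inverse limits over cofinal subsystems coincide. Consequently, if $\varphi_{e,f}(g)\neq\varphi_{e,f}(h)$ for some $f\ll e$, then also $\varphi_{e,b}(g)\neq\varphi_{e,b}(h)$, because $\varphi_{e,f}=\varphi_{b,f}\circ\varphi_{e,b}$. If instead $e\in\mathcal B$ itself, the conclusion is trivial with $b=e$ in the case $e\ll e$. Otherwise the cofinality argument above handles it.
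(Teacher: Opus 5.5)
Your proposal is correct and follows the paper's proof: apply inverse-limit preservation to the up-directed set $\{b\in\mathcal B: b\ll e\}$, whose supremum is $e$, then use injectivity of the canonical map $G_e\to\varprojlim_{b} G_b$ (the paper cites Yeager's Lemma~1 for this), and pick a coordinate where the threads of $g$ and $h$ differ. The edge-case discussion is unnecessary and not quite accurate: nonemptiness comes from the convention that up-directed sets are nonempty, not from $\sup=e$, which fails for $e=0$; and a hypothetical version of the hypothesis phrased over $\downarrow e$ could not be reduced to $\{b\in\mathcal B: b\ll e\}$ by cofinality. Neither point affects your main argument.
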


\begin{proof}
Let
\[
D_e:=\{b\in\mathcal B:b\ll e\}.
\]
By Definition~\ref{def:domain-basis}, $D_e$ is up-directed and
\[
e=\sup D_e.
\]
Since $\mathscr G$ is inverse-limit preserving, the canonical map
\[
\eta_e\colon G_e\to \varprojlim_{b\in D_e} G_b,
\qquad
\eta_e(g)=\bigl(\varphi_{e,b}(g)\bigr)_{b\in D_e},
\]
is injective by inverse-limit preservation; see \cite[Lemma~1]{Yeager}. 

If
\[
\varphi_{e,b}(g)=\varphi_{e,b}(h)
\qquad\text{for all } b\in D_e,
\]
then $\eta_e(g)=\eta_e(h)$, contradicting $g\neq h$. Hence there exists
$b\in D_e$ such that
\[
\varphi_{e,b}(g)\neq \varphi_{e,b}(h).
\]
\end{proof}

The idea of the construction is to combine the metric structure of the
semilattice $E(S)$ with a family of metrics in the groups $G_b$,
where $b$ ranges over a countable domain basis. The metric measures both
the distance between idempotents and the discrepancy between the
projections of elements along the inverse system defined by the bonding maps.

For each $b\in E(S)$, define
\[
\twoheaduparrow b:=\{e\in E(S): b\ll e\}.
\]
Since $E(S)$ carries the Lawson topology, the set $\twoheaduparrow b$ is open.
For $b\in\mathcal B\setminus\{0\}$, define
\[
a_b(e):=\rho\bigl(e,E(S)\setminus\twoheaduparrow b\bigr),
\qquad e\in E(S),
\]
and set $a_0\equiv 1$.

\begin{lemma}\label{lem:ab}
For each $b\in\mathcal B$, the function
\[
a_b\colon E(S)\to[0,1]
\]
is $1$-Lipschitz. Moreover, for $b\neq 0$,
\[
a_b(e)>0
\quad\Longleftrightarrow\quad
b\ll e.
\]
In particular, if $b\not\leq e$, then $b\not\ll e$ and hence $a_b(e)=0$.
\end{lemma}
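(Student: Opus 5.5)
The plan is to treat $a_0\equiv 1$ and the case $b\neq 0$ separately. For $b=0$ (the minimum of $E(S)$, which exists by Lemma~\ref{lem:compact-semilattice-minimum}) the function is constant, so it is trivially $1$-Lipschitz and takes values in $[0,1]$. For $b\neq 0$, I would write $A_b:=E(S)\setminus\twoheaduparrow b$ and use the standard fact that the distance to a set, $e\mapsto\rho(e,A_b)=\inf_{a\in A_b}\rho(e,a)$, is $1$-Lipschitz. This follows from the triangle inequality: $\rho(e,a)\le\rho(e,e')+\rho(e',a)$, and taking the infimum over $a$ and symmetrizing gives the estimate. Since $\rho\le 1$, the values lie in $[0,1]$. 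One point needs care here: $A_b$ must be nonempty, otherwise the infimum would be $+\infty$. This holds because $b\neq 0$ and $0\ll b$ fails. Indeed, if $0\in\twoheaduparrow b$, then $b\ll 0$, so $b\le 0$ by Lemma~\ref{lem:way-below-basic}, forcing $b=0$. Hence $0\in A_b$.

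For the equivalence, I would rely on the fact, noted just before the statement, that $\twoheaduparrow b$ is open in the Lawson topology, and this topology is the topology of $E(S)$ by Theorem~\ref{th:perfect-semilattice-characterization}. So $A_b$ is closed. In a metric space, the distance to a nonempty closed set vanishes exactly on that set. Therefore $a_b(e)>0$ if and only if $e\notin A_b$, that is, if and only if $e\in\twoheaduparrow b$, which means $b\ll e$.

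The final assertion then follows from Lemma~\ref{lem:way-below-basic}(1). If $b\not\le e$, then $b\ll e$ is impossible, so $a_b(e)=0$ by the equivalence, provided $b\neq 0$. If $b=0$, the hypothesis $b\not\le e$ never holds, so the claim is vacuous.

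I expect the only delicate step to be justifying that $\twoheaduparrow b$ is open, since the Lawson topology is defined via subbasic sets of exactly this form. Under Definition~\ref{def:lawson-topology} this is immediate by taking $F=\varnothing$. If one preferred not to rely on that, the fallback is the interpolation property of domains: $b\ll e$ implies $b\ll z\ll e$ for some $z$, which gives Scott openness of $\twoheaduparrow b$, and Scott-open sets are Lawson open.
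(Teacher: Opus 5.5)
Your proposal is correct and follows essentially the same route as the paper: $a_b$ is the distance to the closed set $E(S)\setminus\twoheaduparrow b$ (closed because $\twoheaduparrow b$ is Lawson-open), which gives both the $1$-Lipschitz bound and the characterization of where $a_b>0$, and the last claim comes from Lemma~\ref{lem:way-below-basic}. Your extra checks fill in details the paper leaves implicit: this set is nonempty because it contains $0$ when $b\neq 0$, and the final assertion is vacuous for $b=0$.
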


\begin{proof}
Since $a_b$ is the distance to a closed subset of a metric space, it is
$1$-Lipschitz. The second claim follows from the fact that
$\twoheaduparrow b$ is open. Finally, if $b\not\leq e$, then
$b\not\ll e$ by Lemma~\ref{lem:way-below-basic}, so $a_b(e)=0$.
\end{proof}

For $e\in E(S)$ and $b\in\mathcal B$, define the extended bonding map
\[
\widehat\varphi_{e,b}\colon G_e\to G_b
\]
by
\[
\widehat\varphi_{e,b}=
\begin{cases}
\varphi_{e,b}, & \text{if } b\leq e,\\[4pt]
\text{the constant map } c_b, & \text{if } b\not\leq e.
\end{cases}
\]

For $b\in\mathcal B$ and $(e,g),(f,h)\in S$, define
\begin{align}
P_b\bigl((e,g),(f,h)\bigr)
&:=
|a_b(e)-a_b(f)|
\notag\\
&\quad
+\sum_{k\geq 1}2^{-k}
\left|
a_b(e)\,d_b\bigl(\widehat\varphi_{e,b}(g),t_{b,k}\bigr)
-
a_b(f)\,d_b\bigl(\widehat\varphi_{f,b}(h),t_{b,k}\bigr)
\right|.
\label{eq:Pb}
\end{align}

Finally, define
\begin{equation}\label{eq:bowman-metric}
d\bigl((e,g),(f,h)\bigr)
:=
\rho(e,f)+\sum_{j\geq 1}2^{-j}P_{b_j}\bigl((e,g),(f,h)\bigr).
\end{equation}

\begin{lemma}\label{lem:d-metric}
The function $d$ defined in \eqref{eq:bowman-metric} is a metric on $S$.
Moreover,
$0\leq d\leq 3$.
\end{lemma}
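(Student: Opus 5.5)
The plan is to check the metric axioms directly from the definitions \eqref{eq:Pb} and \eqref{eq:bowman-metric}. For the bound, since $0\le a_b\le 1$ and $0\le d_b\le 1$, each term $a_b(e)d_b(\cdot,t_{b,k})$ lies in $[0,1]$. Hence $|a_b(e)-a_b(f)|\le 1$, and the sum over $k$ is at most $\sum_k 2^{-k}=1$, so $P_b\le 2$. Then $d\le \rho+\sum_j 2^{-j}\cdot 2\le 1+2=3$. In particular all series converge absolutely and $d$ is well defined.

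Symmetry and $d(x,x)=0$ are immediate from the definitions. For the triangle inequality, I would note that each $P_b$ is a weighted sum of terms of the form $|F(x)-F(y)|$, where $F\colon S\to\mathbb R$ is one of the functions $(e,g)\mapsto a_b(e)$ or $(e,g)\mapsto a_b(e)\,d_b(\widehat\varphi_{e,b}(g),t_{b,k})$. Each such term is a pseudometric, and so is $\rho(\pi(\cdot),\pi(\cdot))$. A convergent nonnegative combination of pseudometrics is again a pseudometric, so $d$ is a pseudometric.

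The substantive step is definiteness. Suppose $d((e,g),(f,h))=0$. Then $\rho(e,f)=0$, so $e=f$, and every $P_b$ vanishes. If $g\ne h$, Lemma~\ref{lem:separate-via-basis} gives $b\in\mathcal B$ with $b\ll e$ and $\varphi_{e,b}(g)\ne\varphi_{e,b}(h)$.

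If $b\ne 0$, Lemma~\ref{lem:ab} gives $a_b(e)>0$. If $b=0$, then $a_0\equiv1$ by definition. In either case $a:=a_b(e)>0$. Since $b\le e$, the extended maps are the genuine bonding maps, and $P_b=0$ forces
\[
d_b\bigl(\varphi_{e,b}(g),t_{b,k}\bigr)=d_b\bigl(\varphi_{e,b}(h),t_{b,k}\bigr)\qquad\text{for all }k,
\]
after dividing by $a$. By density of $D_b$, choose $t_{b,k}\to\varphi_{e,b}(g)$. The left side tends to $0$, so the right side does too, which gives $\varphi_{e,b}(h)=\varphi_{e,b}(g)$, a contradiction. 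Thus $g=h$.

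The main obstacle I expect is exactly this use of the separation lemma together with the positivity of the weight $a_b(e)$. The lemma is where inverse-limit preservation enters, and the weights must not vanish at the separating $b$. This is why $a_b(e)>0\iff b\ll e$ is needed, and why the case $b=0$ has to be handled separately via $a_0\equiv 1$. Everything else is routine bookkeeping.
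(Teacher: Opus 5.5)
Your proposal is correct and follows the paper's proof step by step. It uses the same bound $P_b\le 2$, the same triangle-inequality argument (your sum of pseudometrics $|F(x)-F(y)|$ is the paper's $\mathbb R\times\ell^1$ embedding in different words), and the same definiteness argument via Lemma~\ref{lem:separate-via-basis}, positivity of $a_b(e)$, and density of $D_b$. Your separate handling of the case $b=0$ through $a_0\equiv 1$ is a small but welcome addition, since the paper cites Lemma~\ref{lem:ab}, which is stated only for $b\neq 0$.
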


\begin{proof}
For each $b\in\mathcal B$, since $0\leq a_b\leq 1$ and $0\leq d_b\leq 1$, the
series defining $P_b$ converges and satisfies
$0\leq P_b\leq 2$.
Hence the series in \eqref{eq:bowman-metric} converges absolutely and uniformly,
so $d$ is well defined and bounded by $3$.
Symmetry and non-negativity are immediate. 
For each fixed $b\in\mathcal B$, the map
\[
(e,g)\longmapsto
\left(
a_b(e),
\bigl(a_b(e)\,d_b(\widehat\varphi_{e,b}(g),t_{b,k})\bigr)_{k\ge1}
\right)
\]
takes values in $\mathbb R\times \ell^1(\mathbb N)$, and $P_b$ is exactly the $\ell^1$-distance between the corresponding images. Hence $P_b$ satisfies the triangle inequality.
It remains to prove that
\[
d\bigl((e,g),(f,h)\bigr)=0
\quad\Longrightarrow\quad
(e,g)=(f,h).
\]
If $d((e,g),(f,h))=0$, then $\rho(e,f)=0$, hence $e=f$.
Assume now that $g\neq h$. By Lemma~\ref{lem:separate-via-basis}, there exists
$b\in\mathcal B$ such that
\[
b\ll e
\qquad\text{and}\qquad
\varphi_{e,b}(g)\neq \varphi_{e,b}(h).
\]
Since $b\ll e$, Lemma~\ref{lem:ab} gives $a_b(e)>0$. Therefore
\[
P_b\bigl((e,g),(e,h)\bigr)=0
\]
implies
\[
d_b\bigl(\varphi_{e,b}(g),t_{b,k}\bigr)
=
d_b\bigl(\varphi_{e,b}(h),t_{b,k}\bigr)
\qquad\text{for every } k\geq 1.
\]
By density of $\{t_{b,k}\}_{k\geq 1}$ in $G_b$, this forces
\[
\varphi_{e,b}(g)=\varphi_{e,b}(h),
\]
a contradiction. Hence $g=h$, and $d$ is a metric.
\end{proof}

We now show that the topology induced by $d$ coincides with the Bowman
topology.

\begin{lemma}\label{lem:d-continuous-bowman}
For each fixed $(f,h)\in S$, the map
\[
S\to \mathbb R,
\qquad
(e,g)\mapsto d\bigl((e,g),(f,h)\bigr)
\]
is continuous with respect to the Bowman topology.
\end{lemma}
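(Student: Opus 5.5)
The series in \eqref{eq:bowman-metric} converges uniformly on $S$, since the $j$-th term is bounded by $2^{-j}\cdot 2$. It therefore suffices to prove that, for fixed $(f,h)$, the map $(e,g)\mapsto\rho(e,f)$ and each map $(e,g)\mapsto P_b((e,g),(f,h))$ are continuous for $\mathcal T_B$. Inside $P_b$ the $k$-series also converges uniformly, so the question reduces further to continuity of the single functions
\[
(e,g)\longmapsto a_b(e)
\qquad\text{and}\qquad
\Psi_{b,k}(e,g):=a_b(e)\,d_b\bigl(\widehat\varphi_{e,b}(g),t_{b,k}\bigr).
\]
The first map, and $\rho(e,f)$ as well, factor through the projection $\pi\colon S\to E(S)$. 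Here $\pi$ is continuous by Lemma~\ref{lem:pi-continuous}, since $\mathcal T_B$ is a topological Clifford semigroup topology by Theorem~\ref{th:bowman}. Also $a_b$ is Lipschitz by Lemma~\ref{lem:ab}. So these two pieces are immediate.

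The real work is continuity of $\Psi_{b,k}$ at a point $(e_0,g_0)$. I would split into two cases according to whether $b\ll e_0$.

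\emph{Case $b\not\ll e_0$.} Then $a_b(e_0)=0$. Since $0\le d_b\le 1$, we have $|\Psi_{b,k}(e,g)|\le a_b(e)$. Continuity of $a_b\circ\pi$ then forces $\Psi_{b,k}(e,g)\to0=\Psi_{b,k}(e_0,g_0)$, so the discontinuity of $\widehat\varphi$ across the boundary $b\not\le e$ is killed by the weight $a_b$. This is precisely why the weights were introduced.

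\emph{Case $b\ll e_0$.} Now $U:=\twoheaduparrow b$ is a Lawson-open neighbourhood of $e_0$, and every $e\in U$ satisfies $b\le e$, so $\widehat\varphi_{e,b}=\varphi_{e,b}$ on $\pi^{-1}(U)$. On this open set,
\[
\Psi_{b,k}(e,g)=a_b(e)\,d_b\bigl(b\cdot(e,g),t_{b,k}\bigr),
\]
because $\varphi_{e,b}(g)=bg$ is multiplication by the idempotent $b$ in $S$. The map $x\mapsto bx$ is continuous on $S$ and lands in $G_b$ when $\pi(x)\ge b$. The subspace topology on $G_b$ is its given compact metrizable topology, so $d_b$ is continuous there. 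Hence $\Psi_{b,k}$ is continuous on the open set $\pi^{-1}(U)$, which contains $(e_0,g_0)$.

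The main obstacle is justifying that the topology used here really is $\mathcal T_B$, with group topologies agreeing with the subspace ones. I would invoke Corollary~\ref{cor:yeager-bowman} under (B1)--(B3) to identify $\mathcal T_B$ with the original compact topology of $S$. Continuity of multiplication and of $\pi$ in that topology then gives everything. Alternatively, one could argue directly from the basic sets $W_B(U,V)$: continuity of $x\mapsto bx$ into $G_b$ near points above $b$ amounts to the statement that $W(U\cap\uparrow b,(b,V))$ is Bowman-open, which follows from the definition in \eqref{eq:W-yeager}.
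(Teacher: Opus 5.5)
Your proof is correct. It follows the paper's overall skeleton: uniform convergence of both series, a case split according to whether $b\ll e_0$, and the weight $a_b$ killing the jump of $\widehat\varphi_{e,b}$. Where it departs from the paper is in how continuity of the individual functions is justified.

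The paper invokes Corollary~\ref{cor:yeager-bowman} only to replace $\mathcal T_B$ by the first-countable Yeager topology $\mathcal T_{Y_1}$. It then checks continuity of each $\phi_{b,k}$ directly on Yeager basic sets, in three cases ($b=0$; $0\neq b\not\ll e_0$; $0\neq b\ll e_0$). In each case it produces an open $U\ni e_0$ and, for every $f\in U\cap\downarrow e_0$, an open $V_f\subseteq G_f$ (typically $V_f=\varphi_{f,b}^{-1}(V_b)$). The corresponding set $W(U,(f,V_f))$ lies where $\phi_{b,k}$ is $\varepsilon$-close to $\phi_{b,k}(e_0,g_0)$. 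This uses only continuity of $a_b$, of the bonding maps and of $d_b$.

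You instead use the same corollary to identify $\mathcal T_B$ with the original semigroup topology of $S$. You then observe that $\widehat\varphi_{e,b}(g)=b\cdot(e,g)$ on the open set $\pi^{-1}(\twoheaduparrow b)$, so continuity follows at once from continuity of multiplication and of $\pi$.

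Your route is shorter and needs neither $\mathcal T_Y=\mathcal T_{Y_1}$ nor a separate case $b=0$. That case falls into your second one, since $0\ll e$ for every $e$, $\twoheaduparrow 0=E(S)$ and $a_0\equiv 1$. The paper's route stays closer to the abstract data $(E(S),\{G_e\},\{\varphi_{f,e}\})$, working with the defining neighbourhoods and bonding maps rather than with the semigroup multiplication.

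One small correction to your closing aside: $U\cap\uparrow b$ need not be open. The correct formulation is that for a nonempty open $U\subseteq\twoheaduparrow b$ one has $b\le\inf U$. Then $W(U,(b,V))=W_B\bigl(U,\varphi_{\inf U,b}^{-1}(V)\bigr)$, which is a Bowman basic set.
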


\begin{proof}

By Corollary~\ref{cor:yeager-bowman}, the Bowman topology coincides with the
Yeager topology. Since $E(S)$ is metrizable, it suffices to prove continuity
with respect to the first-countable Yeager topology $\mathcal T_{Y_1}$.
Fix $b\in\mathcal B$ and $k\geq 1$, and define
\[
\phi_{b,k}(e,g):=
a_b(e)\,d_b\bigl(\widehat\varphi_{e,b}(g),t_{b,k}\bigr).
\]

We prove that $\phi_{b,k}$ is $\mathcal T_{Y_1}$-continuous at an arbitrary
point $(e_0,g_0)\in S$. Fix $b \in \mathcal{B}$ and $k \ge 1$, and let $\varepsilon > 0$. Set 
            $$\mathcal{O}_{\varepsilon}=\{(e, g) \in S \mid |\phi_{b,k}(e,g)- \phi_{b,k}(e_{0}, g_{0})| < \varepsilon\}.$$
         
         We distinguish three cases:
        \begin{itemize}
            \item[(i)] $b=0$. By continuity of $u \mapsto d_{0}(u, t_{0,k})$ on $G_0$, there exists an open subset $V_{0} \subseteq G_0$ with $\varphi_{e_0,0}(g_{0}) \in V_{0}$ and $$|d_{0}(u, t_{0,k})-d_{0}(\varphi_{e_0,0}(g_0),t_{0,k})| < \epsilon, \quad \text{ for every } u \in V_{0}.$$
            Let $U=E(S)$ and set $V_{f}:=(\varphi_{f,0})^{-1}(V_{0})$ (recall that each bonding map is continuous, so $V_{f}$ is an open subset of $G_f$) for each $f \in U \cap ({ \downarrow e_{0} })= \downarrow e_{0}$. If $(e,g) \in W(U, (f, V_{f}))$, then $e \in U \cap ({ \uparrow f })={ \uparrow f }$ and $\varphi_{f,e}(s) \in V_{f}=(\varphi_{f,0})^{-1}(V_{0})$, hence $\varphi_{e,0}(g)=\varphi_{f,0}(\varphi_{e,f}(g)) \in V_{0}$. Consequently $|\phi_{0,k}(e,g)-\phi_{0,k}(e_{0}, g_{0})|=|d_{0}(\varphi_{e,0}(s), t_{0,k})-d_{0}(\varphi_{e_0,0}(g_{0}), t_{0,k})| < \varepsilon$. Therefore 
            $$W(U, (f, V_{f})) \subseteq \mathcal{O}_{\epsilon}.$$
            \item[(ii)] $0 \neq b \not \ll e_{0}$. Observe that $a_{b}(e_{0})=0$, so by continuity of $a_{b}$ we have that $U:= a_{b}^{-1}((-\varepsilon, +\varepsilon))$ is an open subset of $E(S)$ containing $e_{0}$. For each $f \in U \cap ({ \downarrow e_0})$ we set $V_{f}:= G_f$. If $(e,g) \in W(U, (f, V_{f}))$, then $|\phi_{b,k}(e,g)-\phi_{b,k}(e_{0}, g_{0})| = \phi_{b,k}(e, g) \le a_{b}(e)d_{b}(\widehat{\varphi}_{e,b}(s), t_{b,k}) \le a_b(e) < \varepsilon$. Therefore 

        $$W(U, (f, V_{f})) \subseteq \mathcal{O}_{\varepsilon}.$$
        \item[(iii)] $0 \neq b \ll e_{0}$. By continuity of $a_{b}$ on $E(S)$, there exists an open set $U^{\ast} \subseteq E(S)$ with $e_{0} \in U^{\ast}$ and 
        $$|a_{b}(e)-a_{b}(e_{0})|< \varepsilon/2 \quad \text{ for every } e \in U^{\ast}.$$
        Moreover, since the topology of $E(S)$ is the Lawson topology and $b \ll e_{0}$, $\twoheaduparrow b$ is an open subset of $E(S)$ with $e_{0} \in { \twoheaduparrow b }$. Consequently $U:= U^{\ast} \cap ({ \twoheaduparrow b })$ is an open subset of $E(S)$ with $e_{0} \in U$ and such that 
        $$b \ll e \quad \text{ and } \quad|a_{b}(e) - a_{b}(e_{0})| < \varepsilon/2 \quad \text{ for every } e \in U.$$ 
        By continuity of $u \mapsto d_{b}(u, t_{b,k})$ on $G_b$, there exists an open set $V_{b} \subseteq G_b$ with $\varphi_{e_0,b}(g_{0}) \in V_{b}$ and $$|d_{b}(u, t_{b,k})-d_{b}(\varphi_{e_0,b}(g_{0}),t_{b,k})| < \varepsilon/2, \quad \text{ for every } u \in V_{b}.$$
        Set $V_{f}:=(\varphi_{f,b})^{-1}(V_{b})$ (recall that each bonding map is continuous, so $V_{f}$ is an open subset of $G_f$) for each $f \in U \cap ({ \downarrow e_{0} })$ (observe that $f \in U$ implies $b \ll f$, and so $b \le f$ by Lemma~\ref{lem:way-below-basic}). If $(e,g) \in W(U, (f, V_{f}))$, then $e \in U \cap ({ \uparrow f })$ and $\varphi_{e,f}(g) \in V_{f}=(\varphi_{f,b})^{-1}(V_f)$, hence $\varphi_{e,b}(s)=\varphi_{f,b}(\varphi_{e,f}(g)) \in V_{b}$. Consequently $|\phi_{b,k}(e,g)-\phi_{b,k}(e_{0}, g_{0})|=|a_{b}(e)d_{b}(\varphi_{e,b}(g), t_{b,k})-a_{b}(e_{0})d_{b}(\varphi_{e_0,b}(g_{0}), t_{b,k})| \le |a_{b}(e) - a_{b}(e_{0})|+|d_{b}(\varphi_{e,b}(g), t_{b,k}) - d_{b}(\varphi_{e_0,b}(g_{0}), t_{b,k})| < \varepsilon/2 + \varepsilon/2=\varepsilon$. Therefore 
        $$W(U, (f, V_{f})) \subseteq \mathcal{O}_{\varepsilon}.$$
        \end{itemize}
        Thus $\phi_{b,k}$ is continuous at $(e_0,g_0)$. Since $(e_0,g_0)$ was arbitrary,
$\phi_{b,k}$ is $\mathcal T_{Y_1}$-continuous on $S$.

\medskip

Therefore, for fixed $(f,h)\in S$, the function
\[
(e,g)\longmapsto
\bigl|\phi_{b,k}(e,g)-\phi_{b,k}(f,h)\bigr|
\]
is continuous and bounded by $1$, so the series
\[
\sum_{k\ge1}2^{-k}\bigl|\phi_{b,k}(e,g)-\phi_{b,k}(f,h)\bigr|
\]
converges uniformly and defines a continuous function.

Adding the continuous term $|a_b(e)-a_b(f)|$, we obtain that
\[
(e,g)\mapsto P_b\bigl((e,g),(f,h)\bigr)
\]
is continuous.

Finally, since $0\le P_{b_j}\le 2$, the series
\[
\sum_{j\ge1}2^{-j}P_{b_j}\bigl((e,g),(f,h)\bigr)
\]
converges uniformly and is continuous. The map
\[
(e,g)\mapsto \rho(e,f)
\]
is continuous because $\pi\colon S\to E(S)$ is continuous. Therefore
\[
(e,g)\mapsto d\bigl((e,g),(f,h)\bigr)
\]
is continuous.
\end{proof}

\begin{theorem}\label{th:metric-equals-bowman}
The metric topology induced by $d$ coincides with the Bowman topology.
\end{theorem}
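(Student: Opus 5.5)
The argument is the standard compact-to-Hausdorff comparison. Let $\tau_d$ denote the metric topology induced by $d$, and $\mathcal T_B$ the Bowman topology, which by Corollary~\ref{cor:yeager-bowman} is also the given compact Hausdorff topology of $S$.

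First, I show $\tau_d\subseteq\mathcal T_B$. For every $(f,h)\in S$ and $r>0$, the open ball $B_d((f,h),r)$ is the preimage of $(-\infty,r)$ under the map $(e,g)\mapsto d((e,g),(f,h))$. By Lemma~\ref{lem:d-continuous-bowman} this map is $\mathcal T_B$-continuous, so each ball is $\mathcal T_B$-open. Since the balls form a basis of $\tau_d$, every $d$-open set is $\mathcal T_B$-open.

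Second, I conclude equality by compactness. The identity map $\mathrm{id}\colon(S,\mathcal T_B)\to(S,\tau_d)$ is continuous by the first step. Its domain is compact by Theorem~\ref{th:bowman}, and its codomain is Hausdorff because $d$ is a genuine metric by Lemma~\ref{lem:d-metric}. A continuous bijection from a compact space onto a Hausdorff space is closed, hence a homeomorphism. Therefore $\mathcal T_B=\tau_d$.

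The only point needing care is that Lemma~\ref{lem:d-continuous-bowman} gives continuity in one variable with the other fixed, rather than joint continuity of $d$. This is enough, because it already makes every ball open, and that is all the first step uses. The substantive work, namely positivity of $d$ (from the separation Lemma~\ref{lem:separate-via-basis}) and continuity (from the case analysis in Lemma~\ref{lem:d-continuous-bowman}), is done in those two lemmas. Beyond them I expect no real obstacle. I would only check explicitly that compactness of $S$ refers to the same topology $\mathcal T_B$, which holds by Corollary~\ref{cor:yeager-bowman} under (B1)--(B3).
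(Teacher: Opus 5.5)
Your proposal is correct and is essentially the paper's own argument: Lemma~\ref{lem:d-continuous-bowman} makes every $d$-ball Bowman-open, so $\mathrm{id}\colon(S,\mathcal T_B)\to(S,\mathcal T_d)$ is continuous. Since $(S,\mathcal T_B)$ is compact Hausdorff by Theorem~\ref{th:bowman} and $(S,\mathcal T_d)$ is Hausdorff by Lemma~\ref{lem:d-metric}, this identity is a homeomorphism; your remark that continuity in one variable suffices reflects exactly what the paper uses.
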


\begin{proof}
By Lemma~\ref{lem:d-continuous-bowman}, every map
\[
(e,g)\mapsto d\bigl((e,g),(f,h)\bigr)
\]
is Bowman-continuous for fixed $(f,h)\in S$. Hence every open $d$-ball is
Bowman-open, and therefore the identity map
\[
\mathrm{id}\colon (S,\mathcal T_B)\to (S,\mathcal T_d)
\]
is continuous. Since $(S,\mathcal T_B)$ is compact Hausdorff by
Theorem~\ref{th:bowman}, and $(S,\mathcal T_d)$ is Hausdorff because it is
metric, the identity map is a homeomorphism. Thus
$\mathcal T_d=\mathcal T_B$.
\end{proof}

Combining this with Corollary~\ref{cor:yeager-bowman}, we obtain the main
metrizability statement announced in the introduction.

\begin{corollary}\label{cor:bowman-metrizable-explicit}
Under assumptions \textbf{(B1)}--\textbf{(B3)}, the Bowman topology on $S$ is
metrizable. More precisely, the metric defined in \eqref{eq:bowman-metric} is
compatible with the topology of $S$.
\end{corollary}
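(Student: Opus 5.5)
The corollary is a bookkeeping step: I will chain Theorem~\ref{th:metric-equals-bowman} with Corollary~\ref{cor:yeager-bowman}, after checking that assumptions \textbf{(B1)}--\textbf{(B3)} are exactly what those results need.

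\emph{Step 1: the original topology of $S$ is the Bowman topology.} By \textbf{(B1)}, $E(S)$ with the subspace topology from $S$ is a perfect semilattice. By Definition~\ref{def:perfect-semilattice}, it is therefore a compact Hausdorff Lawson semilattice. Since $S$ is a compact Hausdorff topological Clifford semigroup, Corollary~\ref{cor:yeager-bowman} gives $\mathcal T_S=\mathcal T_Y=\mathcal T_B$. Since $E(S)$ is metrizable, it is first countable, so we also get $\mathcal T_{Y_1}=\mathcal T_B$; this is the identification used in Lemma~\ref{lem:d-continuous-bowman}. The hypotheses of Theorem~\ref{th:bowman} hold:
\begin{itemize}
\item perfectness of $E(S)$ comes from \textbf{(B1)};
\item compact Hausdorff groups $G_e$ come from \textbf{(B2)};
\item inverse-limit preservation comes from \textbf{(B3)}, and is also guaranteed by Theorem~\ref{th:yeager}.
\end{itemize}
Hence $\mathcal T_B$ is a compact Hausdorff topology.

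\emph{Step 2: the metric from \eqref{eq:bowman-metric} is well defined.} The construction needs a countable domain basis $\mathcal B$ of $E(S)$. This exists by Theorems~\ref{th:perfect-semilattice-characterization} and~\ref{th:countable-basis-lawson}, because a compact metrizable space is second countable. It also needs compatible bounded metrics $d_b$ and countable dense sets $D_b$ on each $G_b$. These exist by \textbf{(B2)}, since compact metrizable spaces are separable. Lemma~\ref{lem:d-metric} then shows that $d$ is a metric; this uses Lemma~\ref{lem:separate-via-basis}, which is where \textbf{(B3)} enters.

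\emph{Step 3: the metric induces the topology of $S$.} Theorem~\ref{th:metric-equals-bowman} gives $\mathcal T_d=\mathcal T_B$. Combined with Step 1, this yields $\mathcal T_d=\mathcal T_S$. So the Bowman topology is metrizable, and $d$ is an explicit compatible metric.

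There is no real obstacle here. The only point to check carefully is that the subspace topology on $E(S)$ is the Lawson topology used in defining $a_b$ and $\twoheaduparrow b$. Theorem~\ref{th:perfect-semilattice-characterization} guarantees this, so the sets $\twoheaduparrow b$ are open and the functions $a_b$ behave as stated in Lemma~\ref{lem:ab}.
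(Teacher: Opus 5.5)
Your proposal is correct and follows the paper's own route: the paper obtains the corollary by combining Theorem~\ref{th:metric-equals-bowman}, which gives $\mathcal T_d=\mathcal T_B$, with Corollary~\ref{cor:yeager-bowman}, which gives $\mathcal T_S=\mathcal T_B$, exactly as in your Step~3. Your extra checks (existence of the countable basis $\mathcal B$, of the metrics $d_b$ and dense sets $D_b$, the role of \textbf{(B3)}, and the Lawson topology on $E(S)$) are accurate, but they are already built into the setup preceding the statement.
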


\subsection{Metrizability of the Yeager topology with isometric bonding maps}

We conclude with a simpler metrization result under a stronger compatibility
assumption on the bonding homomorphisms.

Let $S$ be a compact Hausdorff topological Clifford semigroup. Assume:
\begin{enumerate}
\item[\textbf{(Y1)}] $(E(S),\mathcal T_{E(S)})$ is metrizable, and let $\rho$ be
a compatible metric on $E(S)$;
\item[\textbf{(Y2)}] for every $e\in E(S)$ there exists a compatible metric
$d_e$ on $G_e$ such that for every $e\leq f$ the bonding homomorphism
\[
\varphi_{f,e}\colon (G_f,d_f)\to (G_e,d_e)
\]
is an isometry.
\end{enumerate}

\begin{theorem}\label{th:yeager-isometric}
Under assumptions \textbf{(Y1)}--\textbf{(Y2)}, the Yeager topology on $S$ is
metrizable.
\end{theorem}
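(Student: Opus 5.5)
We construct an explicit metric directly, following the previous subsection but using the isometries to avoid the countable basis. Since every bonding map is an isometry, it is in particular injective. For $(e,g),(f,h)\in S$ we compare the two points in the common lower group $G_{ef}$ and set
\[
D\bigl((e,g),(f,h)\bigr):=\rho(e,f)+d_{ef}\bigl(\varphi_{e,ef}(g),\varphi_{f,ef}(h)\bigr).
\]
We may assume $\rho\le 1$ and $d_e\le 1$; truncating every $d_e$ at $1$ keeps the bonding maps isometric.

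\textbf{Step 1: $D$ is a metric.} Symmetry is immediate. If $D=0$, then $e=f$, and then $d_e(g,h)=0$. The only real work is the triangle inequality for points $x\in G_e$, $y\in G_f$, $z\in G_k$. Put $m=efk$. Since isometries preserve distances,
\[
d_{ef}\bigl(\varphi_{e,ef}(g),\varphi_{f,ef}(h)\bigr)=d_m\bigl(\varphi_{e,m}(g),\varphi_{f,m}(h)\bigr),
\]
using $\varphi_{ef,m}\circ\varphi_{e,ef}=\varphi_{e,m}$. The same holds for the other two pairs. The triangle inequality therefore reduces to the triangle inequality in $(G_m,d_m)$, and the $\rho$ term is handled by the triangle inequality for $\rho$.

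\textbf{Step 2: the $D$-topology equals $\mathcal T_{Y_1}=\mathcal T_Y$.} By \textbf{(Y1)} the semilattice $E(S)$ is first countable, so Theorem~\ref{th:yeager} identifies the topology of $S$ with $\mathcal T_Y=\mathcal T_{Y_1}$. We first show that each $D$-ball is $\mathcal T_{Y_1}$-open. Fix $(e_0,g_0)$ and $\varepsilon>0$. Take $U=\{e:\rho(e,e_0)<\varepsilon/2\}$. For $f\in U\cap\downarrow e_0$, take
\[
V_f=\{u\in G_f: d_f(u,\varphi_{e_0,f}(g_0))<\varepsilon/2\}.
\]
Suppose $(e,g)\in W(U,(f,V_f))$. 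Then $f\le e$ and $f\le e_0$, so $f\le ee_0$. Applying the isometry $\varphi_{ee_0,f}$ to the $G_{ee_0}$-distance gives
\[
d_{ee_0}\bigl(\varphi_{e,ee_0}(g),\varphi_{e_0,ee_0}(g_0)\bigr)=d_f\bigl(\varphi_{e,f}(g),\varphi_{e_0,f}(g_0)\bigr)<\varepsilon/2.
\]
Hence $D<\varepsilon$, and condition~(1) of the Yeager definition holds with $f=e_0$ in particular. This shows that the identity map $(S,\mathcal T_Y)\to(S,\mathcal T_D)$ is continuous.

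\textbf{Step 3: conclude by compactness.} As in Theorem~\ref{th:metric-equals-bowman}, the space $(S,\mathcal T_Y)$ is compact by Theorem~\ref{th:yeager}, since it is the original topology. The space $(S,\mathcal T_D)$ is Hausdorff because it is metric. A continuous bijection from a compact space onto a Hausdorff space is a homeomorphism, so $\mathcal T_D=\mathcal T_Y$.

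The main obstacle I expect is the containment in Step 2, namely checking that every point of $W(U,(f,V_f))$ lies in the ball. The key point is that an isometry lets us transport distances from $G_f$ up to $G_{ee_0}$, and this needs both $f\le e$ and $f\le e_0$ to hold simultaneously.
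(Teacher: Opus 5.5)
Your proposal is correct and takes essentially the same route as the paper. It uses the same metric $\rho(e,f)+d_{ef}\bigl(\varphi_{e,ef}(g),\varphi_{f,ef}(h)\bigr)$, the same triangle inequality via isometries into $G_{efk}$, the same check that metric balls are Yeager-open by transporting distances from $G_f$ up to $G_{ee_0}$, and the same compact-to-Hausdorff argument; your explicit appeal to $\mathcal T_Y=\mathcal T_{Y_1}$ (from first countability of the metrizable $E(S)$) also justifies checking only condition~(1) of the Yeager definition, a step the paper leaves implicit.
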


\begin{proof}
For $(e,s),(f,t)\in S$, define
\begin{equation}\label{eq:yeager-isometric-metric}
d\bigl((e,s),(f,t)\bigr)
:=
\rho(e,f)+
d_{ef}\bigl(\varphi_{e,ef}(s),\varphi_{f,ef}(t)\bigr).
\end{equation}
This is well defined because $ef\le e,f$.
We first prove that $d$ is a metric. Non-negativity and symmetry are immediate.
If $d((e,s),(f,t))=0$, then $\rho(e,f)=0$, hence $e=f$. Therefore
\[
0=d_e(s,t),
\]
so $s=t$. Thus $d$ is definite.

For the triangle inequality, let $(e,s),(f,t),(g,u)\in S$, and put
\[
w:=efg.
\]
Then $w\le ef$, $w\le fg$, and $w\le eg$. Since the bonding homomorphisms are
isometries, we may compare all relevant distances inside the common lower group
$G_w$. Indeed,
\begin{align*}
d_{eg}\bigl(\varphi_{e,eg}(s),\varphi_{g,eg}(u)\bigr)
&=
d_w\bigl(\varphi_{eg,w}(\varphi_{e,eg}(s)),\varphi_{eg,w}(\varphi_{g,eg}(u))\bigr)\\
&=
d_w\bigl(\varphi_{e,w}(s),\varphi_{g,w}(u)\bigr)\\
&\le
d_w\bigl(\varphi_{e,w}(s),\varphi_{f,w}(t)\bigr)
+
d_w\bigl(\varphi_{f,w}(t),\varphi_{g,w}(u)\bigr)\\
&=
d_{ef}\bigl(\varphi_{e,ef}(s),\varphi_{f,ef}(t)\bigr)
+
d_{fg}\bigl(\varphi_{f,fg}(t),\varphi_{g,fg}(u)\bigr).
\end{align*}
Combining this with the triangle inequality for $\rho$ yields
\[
d\bigl((e,s),(g,u)\bigr)
\le
d\bigl((e,s),(f,t)\bigr)+d\bigl((f,t),(g,u)\bigr).
\]
Hence $d$ is a metric.
We next prove that the metric topology is contained in the Yeager topology.
Let $B_r(p)$ be an open $d$-ball and let $(f,t)\in B_r(p)$. Choose
$\varepsilon>0$ such that
\[
d\bigl((f,t),p\bigr)+2\varepsilon<r.
\]
Let
\[
U:=\{e\in E(S):\rho(e,f)<\varepsilon\}.
\]
For each $a\in U\cap\downarrow f$, define
\[
V_a:=
\{g\in G_a:d_a(g,\varphi_{f,a}(t))<\varepsilon\}.
\]
Then $V_a$ is open in $G_a$, and $(f,t)\in W(U,(a,V_a))$.
Now let $(e,s)\in W(U,(a,V_a))$. Then $e\in U\cap\uparrow a$ and
\[
\varphi_{e,a}(s)\in V_a,
\]
so
\[
d_a\bigl(\varphi_{e,a}(s),\varphi_{f,a}(t)\bigr)<\varepsilon.
\]
Since $a\le ef$, the map $\varphi_{ef,a}$ is an isometry, and by functoriality
\[
\varphi_{ef,a}\circ\varphi_{e,ef}=\varphi_{e,a},
\qquad
\varphi_{ef,a}\circ\varphi_{f,ef}=\varphi_{f,a}.
\]
Therefore
\[
d_{ef}\bigl(\varphi_{e,ef}(s),\varphi_{f,ef}(t)\bigr)
=
d_a\bigl(\varphi_{e,a}(s),\varphi_{f,a}(t)\bigr)
<\varepsilon.
\]
Since also $\rho(e,f)<\varepsilon$, it follows that
\[
d\bigl((e,s),(f,t)\bigr)<2\varepsilon.
\]
Hence, by the triangle inequality,
\[
d\bigl((e,s),p\bigr)
\le
d\bigl((e,s),(f,t)\bigr)+d\bigl((f,t),p\bigr)
<
2\varepsilon+d\bigl((f,t),p\bigr)
<r.
\]
Thus
\[
W(U,(a,V_a))\subseteq B_r(p),
\]
so every metric ball is Yeager-open.
Therefore the identity map
\[
\mathrm{id}\colon (S,\mathcal T_Y)\to (S,\mathcal T_d)
\]
is continuous. Since $(S,\mathcal T_Y)$ is compact Hausdorff by
Theorem~\ref{th:yeager}, and $(S,\mathcal T_d)$ is Hausdorff, the identity map
is a homeomorphism. Hence the Yeager topology is metrizable.
\end{proof}

\section{Hilbert's fifth problem for topological Clifford semigroups}\label{sec:hilbert}

Hilbert's fifth problem, in its modern formulation, asks whether every locally
Euclidean topological group is necessarily a Lie group; see, for instance,
\cite{TaoBook}. The answer is affirmative by the work of Gleason and of
Montgomery--Zippin \cite{Gleason,MontgomeryZippinAnnals}.

\begin{theorem}[Hilbert's fifth problem]\label{th:hilbert5}
Every locally Euclidean topological group is a Lie group.
\end{theorem}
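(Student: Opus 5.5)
This is the Gleason--Montgomery--Zippin theorem, and the paper treats it as a classical black box. The honest plan is therefore to cite \cite{Gleason,MontgomeryZippinAnnals}, or the exposition in \cite{TaoBook}, rather than reprove it. If one does want the shape of the argument, I would follow the standard route through the no-small-subgroups (NSS) property. The steps, in order, are as follows.

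First, show that a locally Euclidean group $G$ is locally compact. This is immediate, since $G$ is locally homeomorphic to $\mathbb R^n$. It is also first countable, hence metrizable by Birkhoff--Kakutani.

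Second, prove the Gleason--Yamabe structure theorem. Every locally compact group has an open subgroup $G'$ such that, for every neighbourhood $U$ of the identity, some compact normal subgroup $K\subseteq U$ of $G'$ has $G'/K$ a Lie group. The proof goes through Gleason's construction of "escape norms", built by convolving bump functions against Haar measure. These norms yield one-parameter subgroups in NSS groups. One then shows that a locally compact NSS group is a Lie group: the one-parameter subgroups form a finite-dimensional vector space $L(G)$, and the exponential map is a local homeomorphism onto a neighbourhood of the identity, which gives an analytic atlas via the Baker--Campbell--Hausdorff formula.

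Third, show that a locally Euclidean group has no small subgroups. Apply Gleason--Yamabe to get $G'/K$ Lie with $K$ compact and arbitrarily small. Then show $K$ must be trivial when $K$ is small enough, using the dimension-theoretic argument of Montgomery--Zippin. A small compact subgroup $K$ of a group locally homeomorphic to $\mathbb R^n$ makes $G'$ locally look like $K\times(\text{Lie})$. Invariance of domain and the finiteness of covering dimension then force $K$ to be zero-dimensional and eventually trivial. Once NSS holds, the second step concludes that $G$ is a Lie group.

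I expect the third step, ruling out small compact subgroups via dimension theory, to be the main obstacle. It is the genuinely topological part of Montgomery--Zippin, and it is not reproducible in a few lines. In the paper I would simply record the theorem with its citation and then use it as a black box in the Clifford-semigroup results that follow.
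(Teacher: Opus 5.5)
Your plan matches the paper: Theorem~\ref{th:hilbert5} is stated there without proof, as the classical Gleason--Montgomery--Zippin theorem citing \cite{Gleason,MontgomeryZippinAnnals} with \cite{TaoBook} as background, and it is used only as a black box in Theorem~\ref{th:hilbert-clifford} and Corollary~\ref{cor:finite-dim-inverse-lie}. Your outline of the NSS route through Gleason--Yamabe is a reasonable high-level sketch of the standard proof, but it goes beyond what the paper does and is not needed for its purposes.
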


Recall that a topological group is NSS if it has a neighbourhood of the
identity containing no nontrivial subgroup. We shall also use the following classical criteria. 

\begin{theorem}[Gleason--Yamabe {\cite{GleasonDuke,Yamabe}}]\label{th:gleason-yamabe}
Every locally compact NSS topological group is a Lie group.
\end{theorem}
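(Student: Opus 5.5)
Since this is a classical theorem, the plan is to follow the modern proof of Gleason and Yamabe as presented in \cite{TaoBook}, and not to derive it from earlier results of the paper. Let $G$ be a locally compact NSS group. Fix a compact neighbourhood $U$ of the identity containing no nontrivial subgroup. The aim is to show that $G$ is locally Euclidean, or more precisely that it carries a smooth structure near the identity. Theorem~\ref{th:hilbert5} would then finish the argument, although the construction actually produces the Lie structure directly.

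\emph{Step 1: an escape norm.} For $g\in G$, define the escape norm $\|g\|_{e,U}$ as the inverse of the least $n$ such that $g,g^2,\dots,g^n$ do not all lie in $U$, and set it to $0$ if no such $n$ exists. The NSS hypothesis means that $\|g\|_{e,U}=0$ only for $g=1$. The key estimates to establish are:
\begin{enumerate}
\item the approximate triangle inequality $\|g_1\cdots g_n\|\lesssim\sum_i\|g_i\|$;
\item the commutator estimate $\|[g,h]\|\lesssim\|g\|\,\|h\|$, for $g,h$ in a small neighbourhood.
\end{enumerate}
To prove these I would average against Haar measure. Convolve the indicator of a small neighbourhood with a suitable bump $\psi$, and compare $\|\tau(g)\psi-\psi\|_\infty$ with $\|g\|_{e,U}$. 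This yields a Gleason metric: a left-invariant metric whose norm $\|g\|=d(g,1)$ satisfies the two estimates above. This is the main obstacle. The commutator estimate needs a careful second-order analysis of the regularized bump function, namely that $\psi$ becomes ``$C^{1,1}$ along one-parameter directions'', and this is where local compactness enters essentially through Haar measure.

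\emph{Step 2: one-parameter subgroups and the Lie algebra.} Let $L(G)$ be the space of continuous homomorphisms $\mathbb R\to G$. Using the Gleason metric, I would show that for a small $g$ the powers $g^n$ escape at a linear rate. By local compactness and Arzel\`a--Ascoli, limits of the rescaled paths $t\mapsto g_k^{\lfloor t/\|g_k\|\rfloor}$ with $g_k\to1$ give nontrivial one-parameter subgroups. The commutator estimate shows that $(\phi+\psi)(t):=\lim_n(\phi(t/n)\psi(t/n))^n$ exists, which makes $L(G)$ a real vector space. Scaling by $\|\cdot\|$ gives it a norm, and local compactness forces it to be finite-dimensional, since its unit ball is compact. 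The commutator limit gives a Lie bracket.

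\emph{Step 3: the exponential map is a local homeomorphism.} Define $\exp(\phi)=\phi(1)$; continuity is clear. For injectivity near $0$, I would use that $\|\phi(1)\|\asymp|\phi|$ for small $\phi$. For local surjectivity, I would argue that every $g$ near $1$ is approximated by products of exponentials and use the rescaling argument of Step 2 to show that the image contains a neighbourhood of $1$. Invariance of domain, or compactness of the unit ball together with injectivity, then gives a homeomorphism of a neighbourhood of $0\in L(G)\cong\mathbb R^n$ onto a neighbourhood of $1$. Hence $G$ is locally Euclidean and Theorem~\ref{th:hilbert5} applies. Alternatively, one can conclude directly: the Baker--Campbell--Hausdorff structure derived from the bracket makes multiplication analytic in exponential coordinates.
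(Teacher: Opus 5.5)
The paper gives no proof of this statement. It is quoted from Gleason and Yamabe and used only as a black box in Theorem~\ref{th:nss-clifford}. Your Steps~1--2 are a fair outline of the standard modern route. (One small point: at Step~1 there are no one-parameter subgroups yet. The regularity you need is the second-difference bound $\|\partial_g\partial_h\psi\|_\infty\lesssim\|g\|\,\|h\|$, where $\partial_g\psi=\tau(g)\psi-\psi$.)

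Step~3, where the Lie structure is actually produced, has one step that fails as written and one that is essentially unargued.

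\emph{Injectivity.} Local injectivity of $\exp$ does not follow from $\|\phi(1)\|\asymp|\phi|$. That only shows $\exp^{-1}(1)$ meets a small ball in $\{0\}$. Since $\exp$ is not a homomorphism, you cannot pass from $\exp\phi=\exp\psi$ to $\exp(\phi-\psi)=1$. The missing ingredient is a first-order product formula,
$\|\exp(X+Y)^{-1}\exp(X)\exp(Y)\|\lesssim|X|\,|Y|$ for small $X,Y\in L(G)$.
You get it from $(X+Y)(1)=\lim_n(\exp(X/n)\exp(Y/n))^n$ and the commutator estimate: rearranging $(ab)^n$ into $a^nb^n$ costs $\lesssim n^2\|a\|\,\|b\|$ when $n\|a\|,n\|b\|$ are small. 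It yields $d(\exp X,\exp(X+Y))\geq(c-C|X|)\,|Y|$, hence injectivity on a small ball.

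\emph{Surjectivity.} Approximating $g$ by products of exponentials only puts $g$ in the closure of the group generated by $\exp(L(G))$, not in the image of a small ball. One way to close this:
\begin{enumerate}
\item On a small closed ball $K$ where $\exp$ is injective, $\exp|_K$ is a homeomorphism onto a compact set.
\item If $g_n\to1$ with $g_n\notin\exp(K)$, choose $X_n\in K$ minimizing $d(\exp X,g_n)$. Since $d(\exp X_n,g_n)\leq d(1,g_n)\to0$, you get $X_n\to0$, and $h_n:=\exp(X_n)^{-1}g_n\neq1$.
\item Your rescaling argument, combined with the escape property and the commutator estimate, gives (along a subsequence) some $Y$ with $\|\exp(\|h_n\|Y)^{-1}h_n\|=o(\|h_n\|)$.
\item The product formula then gives $d(\exp(X_n+\|h_n\|Y),g_n)=o(\|h_n\|)<\|h_n\|=d(\exp X_n,g_n)$, contradicting minimality.
\end{enumerate}

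\emph{The final step.} Closing with Theorem~\ref{th:hilbert5} is acceptable here only because the paper also takes that theorem as a black box. Once you know $G$ is locally Euclidean, invoking Theorem~\ref{th:hilbert5} reduces your statement to Gleason's theorem on finite-dimensional NSS groups. The classical solution of Hilbert's fifth problem itself rests on that theorem, so as a self-contained proof this is circular. The direct alternative you mention is not automatic either. Having a bracket on $L(G)$ does not by itself show that $\exp X\exp Y=\exp(\mathrm{BCH}(X,Y))$. That identity, or some other proof that multiplication is smooth in the exponential chart, still has to be established.
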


\begin{theorem}[Gleason--Palais {\cite{GleasonPalais}}]\label{th:gleason-palais}
Let $G$ be a topological group which is locally arcwise connected and admits a
neighbourhood of the identity that can be continuously and injectively mapped
into a finite-dimensional metric space. Then $G$ is a Lie group.
\end{theorem}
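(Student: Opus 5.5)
Since this is a classical theorem quoted from \cite{GleasonPalais}, the natural plan is to reduce it to Theorem~\ref{th:gleason-yamabe} (or to the Gleason--Yamabe structure theory behind it). The route has three steps. First, show that $G$ is locally compact, or at least that its identity component carries a locally compact group topology compatible with arcs. Second, show that $G$ has no small subgroups. Third, conclude by Theorem~\ref{th:gleason-yamabe}. Throughout, the finite-dimensionality hypothesis is used only through the dimension bound it gives: let $U$ be a neighbourhood of the identity and $\iota\colon U\to M$ a continuous injection into a finite-dimensional metric space. Then every compact subset of $U$ is homeomorphic to its image, so it has covering dimension at most $n:=\dim M$.

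For the first step I would work with the \emph{arc topology}. Let $G_a$ be the subgroup generated by arcs through the identity. By local arcwise connectedness, $G_a$ is open in $G$, so it suffices to prove that $G_a$ is a Lie group. The plan is to use the embedding $\iota$ together with the dimension bound. One builds compact sets $A_1\subseteq A_2\subseteq\cdots$ by taking finite products of small arcs. These are compact subsets of $U$, hence of dimension at most $n$. One then shows that once the dimension stabilises, the set $A_k$ must contain a neighbourhood of the identity. The tool here is an invariance-of-domain type argument: translating an $n$-dimensional compact piece by a further arc cannot raise the dimension. This yields a compact neighbourhood of the identity, so $G$ is locally compact. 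I expect this step to be the main obstacle. It is exactly where Palais's improvement over Gleason lies, and dimension theory for general compacta is delicate. The first thing I would try is to import Gleason's original argument for the finite-dimensional, locally compact case, with the local compactness hypothesis replaced by compactness of finite products of arcs.

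Once $G$ is locally compact, the Gleason--Yamabe structure theorem applies. There is an open subgroup $G'$ and arbitrarily small compact normal subgroups $K\subseteq G'$ such that $G'/K$ is a Lie group. Because $\dim G'\le n$, the dimensions of the quotients $G'/K$ are bounded by $n$ and eventually stabilise. Hence for $K$ small enough the group $K$ is zero-dimensional, that is, totally disconnected. Now use local arcwise connectedness. Arcs in $G'$ project to arcs in $G'/K$, and because the projection has totally disconnected compact fibres, arcs lift uniquely. So the projection restricted to the arc component $G'_a$ is a local homeomorphism onto an open subgroup of $G'/K$. It follows that $K\cap G'_a$ is discrete, and that a small neighbourhood of the identity in $G$ is homeomorphic to one in the Lie group $G'/K$. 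Consequently $G$ is NSS, indeed locally Euclidean, and Theorem~\ref{th:gleason-yamabe} (or Theorem~\ref{th:hilbert5}) shows that $G$ is a Lie group.
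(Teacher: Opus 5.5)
The paper gives no proof of this statement: it is quoted from Gleason--Palais and used only as a black box in Theorem~\ref{th:arc-connected-clifford}. Judged on its own, your overall architecture (first local compactness, then the locally compact theory) is sensible. However, there is a genuine gap in Step~1, and that is where essentially all of the difficulty lies.

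Your sentence ``translating an $n$-dimensional compact piece by a further arc cannot raise the dimension'' only restates maximality. Getting from it to ``$A_k$ contains a neighbourhood of the identity'' needs an invariance-of-domain principle. Such a principle is available in $\mathbb{R}^n$, where a compact $n$-dimensional subset has interior. It is not available in $G$, which is not yet known to be locally Euclidean. Nor is it available in $M$, an arbitrary finite-dimensional metric space into which $\iota$ need not be open.

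The irrational winding shows that the mechanism cannot work as stated. Let $L=\{(t,\alpha t)+\mathbb{Z}^2 : t\in\mathbb{R}\}\subseteq\mathbb{T}^2=\mathbb{R}^2/\mathbb{Z}^2$, with $\alpha\notin\mathbb{Q}$ and the subspace topology.
\begin{itemize}
\item $L$ is a Hausdorff group and a $1$-dimensional metric space, so you may take $M=L$, $\iota=\mathrm{id}$, $n=1$.
\item Every arc in $L$ is the image of a compact $t$-interval. Hence $G_a=L$ is open, and every finite product of arcs is a compact segment of dimension $1=n$.
\item Translating by further arcs never raises the dimension.
\item Yet no segment is a neighbourhood of $0$, since points with $|t|$ large return arbitrarily close to $0$. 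So $L$ is not locally compact.
\end{itemize}
The only hypothesis $L$ violates is local arcwise connectedness, and your Step~1 uses that hypothesis solely to make $G_a$ open. Note also that a single chain $A_1\subseteq A_2\subseteq\cdots$ is hopeless even in $\mathbb{R}^2$: built from arcs on one line, it stabilises at dimension $1$ with empty interior.

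What is missing is an argument that genuinely exploits the fact that small neighbourhoods of $e$ are swept out by small arcs from $e$, and uses this to produce a compact neighbourhood. That is the substance of the theorem, and your proposal defers it rather than supplying it.

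Granting local compactness, the rest is the classical Montgomery--Zippin theorem: a locally compact, locally connected, finite-dimensional group is a Lie group. Your outline of this part is essentially right, but it needs two repairs.

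First, boundedness of $\dim G'/K$ only makes $\dim K$ eventually constant, not zero. To get a totally disconnected $K$, choose an admissible $K_2$ with $\dim G'/K_2$ maximal.
\begin{itemize}
\item The admissible $K$ are closed under finite intersections and can be taken inside any neighbourhood of $e$.
\item For admissible $K_1\subseteq K_2$, maximality forces $\dim G'/K_1=\dim G'/K_2$. So the compact Lie group $K_2/K_1$ is $0$-dimensional, hence finite.
\item Therefore the identity component $K_2^0$ lies in every such $K_1$, and so it is trivial.
\end{itemize}

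Second, unique lifting of arcs does not by itself make the projection a local homeomorphism. Use instead a local splitting $V\approx K\times C$ of a neighbourhood of $e$, with $C$ a Euclidean cell. A connected neighbourhood of $e$ inside $V$ projects to a connected subset of the totally disconnected $K$, so it lies in $\{e\}\times C$. Thus $e$ is isolated in $K$, $K$ is finite, $G$ is locally Euclidean, and Theorem~\ref{th:hilbert5} applies.

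This second part needs only local connectedness. The arcs are needed precisely in the step that is still missing.
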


These theorems suggest asking whether analogous statements hold for topological
semigroups; for background on this direction we also refer to
\cite{HofmannHilbert5}.
 For general semigroups this question is subtle, and even in the
Clifford setting one should not expect a global Lie structure on the whole
space. Indeed, the semilattice of idempotents may carry singular topological
behaviour, and different maximal subgroups may have different dimensions.

The appropriate Clifford-semigroup analogue is therefore local in nature: under
which assumptions on a topological Clifford semigroup are the maximal subgroups
forced to be Lie groups?

We begin by fixing the relevant notions.

\begin{definition}[Weakly locally Euclidean]
A topological space $X$ is said to be \emph{weakly locally Euclidean} if for
every $x\in X$ there exist an integer $n(x)\in\mathbb N$, an open neighbourhood
$U_x$ of $x$, and a homeomorphism
\[
\varphi_x\colon U_x\to V_x\subseteq \mathbb R^{n(x)},
\]
where $V_x$ is open in $\mathbb R^{n(x)}$.
\end{definition}

In particular, the local Euclidean dimension is allowed to depend on the point.

\begin{definition}[Weakly locally Euclidean at the idempotents]
Let $S$ be a topological Clifford semigroup. We say that $S$ is
\emph{weakly locally Euclidean at the idempotents} if for every idempotent
$e\in E(S)$ there exist an integer $n(e)\in\mathbb N$, an open neighbourhood
$U_e$ of $e$, and a homeomorphism
\[
\varphi_e\colon U_e\to V_e\subseteq \mathbb R^{n(e)},
\]
where $V_e$ is open in $\mathbb R^{n(e)}$.
\end{definition}

\begin{definition}[Lie type]\label{def:lie-type}
A topological Clifford semigroup $S$ is said to be \emph{of Lie type} if for
every idempotent $e\in E(S)$, the maximal subgroup $G_e$, endowed with the
subspace topology inherited from $S$, is a finite-dimensional Lie group.
\end{definition}

The next result is the natural Hilbert-fifth-type statement for strong
semilattices of topological groups.

\begin{theorem}\label{th:hilbert-clifford}
Let $S$ be a strong semilattice of topological groups. If $S$ is weakly locally
Euclidean at the idempotents, then $S$ is of Lie type.
\end{theorem}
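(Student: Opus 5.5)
The plan is to reduce the statement to the classical Theorem~\ref{th:hilbert5} by showing that each maximal subgroup $G_e$, with its subspace topology, is a locally Euclidean topological group. Since $S$ is a strong semilattice of topological groups, Proposition~\ref{prop:mp-equivalences} tells us that every $G_e$ is open in $S$ and that $S$ is the topological sum of the $G_e$. Moreover, each $G_e$ with the subspace topology is a topological group, because multiplication and inversion on $S$ are continuous and $G_e$ is closed under both.

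Fix $e\in E(S)$. By hypothesis there are an open neighbourhood $U_e$ of $e$ in $S$ and a homeomorphism $\varphi_e\colon U_e\to V_e\subseteq\mathbb R^{n(e)}$ onto an open set. We replace $U_e$ by $U_e':=U_e\cap G_e$. This set is open in $S$, since $G_e$ is open, and it contains $e$. The restriction of $\varphi_e$ to $U_e'$ is a homeomorphism onto $\varphi_e(U_e')$, which is open in $V_e$ and hence open in $\mathbb R^{n(e)}$. So the identity $e$ of the topological group $G_e$ has a neighbourhood homeomorphic to an open subset of $\mathbb R^{n(e)}$.

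Next we transport this chart by translations. For $x\in G_e$, left multiplication $L_x\colon G_e\to G_e$, $y\mapsto xy$, is a homeomorphism of $G_e$ with inverse $L_{x^{-1}}$, and it sends $e$ to $x$. Therefore $xU_e'$ is an open neighbourhood of $x$ in $G_e$, and the map $\varphi_e\circ L_{x^{-1}}$ is a homeomorphism from $xU_e'$ onto an open subset of $\mathbb R^{n(e)}$. Hence $G_e$ is locally Euclidean, of constant dimension $n(e)$. Theorem~\ref{th:hilbert5} then shows that $G_e$ is a Lie group, finite-dimensional since $n(e)<\infty$. As $e$ was arbitrary, $S$ is of Lie type in the sense of Definition~\ref{def:lie-type}.

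The only step that needs care is the localization in the second paragraph. The chart at $e$ is a priori a chart of $S$, not of $G_e$, and this is exactly where the hypothesis that $S$ is a strong semilattice of topological groups is used. Without openness of $G_e$, the set $U_e\cap G_e$ need not be open in $U_e$, and its image under $\varphi_e$ need not be an open Euclidean set. Everything else is routine. Hausdorffness of $G_e$, if required by the version of Theorem~\ref{th:hilbert5} being invoked, follows from local Euclideanity together with the group structure: locally Euclidean groups are $T_1$, and $T_1$ topological groups are Hausdorff.
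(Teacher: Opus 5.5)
Your proof is correct and follows the paper's argument. You use Proposition~\ref{prop:mp-equivalences} to get that $G_e$ is open, restrict the chart at $e$ to $U_e\cap G_e$, and conclude with Theorem~\ref{th:hilbert5}; the left-translation step and the Hausdorffness remark only make explicit what the paper leaves implicit when it says ``hence $G_e$ is locally Euclidean.''
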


\begin{proof}
By Proposition~\ref{prop:mp-equivalences}, each maximal subgroup $G_e$ is open
in $S$. Let $e\in E(S)$. Since $S$ is weakly locally Euclidean at the
idempotents, there exists an open neighbourhood $U_e$ of $e$ in $S$ which is
homeomorphic to an open subset of some Euclidean space. Then
\[
U_e\cap G_e
\]
is an open neighbourhood of $e$ in the topological group $G_e$, and is itself
homeomorphic to an open subset of a Euclidean space. Hence $G_e$ is locally
Euclidean. By Theorem~\ref{th:hilbert5}, $G_e$ is a Lie group. Since $e$ was
arbitrary, $S$ is of Lie type.
\end{proof}

We next obtain a Gleason--Yamabe-type criterion.
In the Clifford setting, the
relevant notion is the following.

\begin{definition}[NSS topological Clifford semigroup]
A topological Clifford semigroup $S$ is said to be \emph{NSS} if every maximal
subgroup $G_e$ is an NSS topological group.
\end{definition}

\begin{theorem}\label{th:nss-clifford}
Let $S$ be a locally compact NSS topological Clifford semigroup such that $E(S)$ is $T_2$ (with the induced topology). Then $S$ is of
Lie type.
\end{theorem}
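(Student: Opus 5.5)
The plan is to reduce the statement, one maximal subgroup at a time, to the Gleason--Yamabe theorem (Theorem~\ref{th:gleason-yamabe}). Fix $e\in E(S)$. By definition of an NSS Clifford semigroup, $G_e$ with the subspace topology is an NSS topological group. So it suffices to show that $G_e$ is locally compact; Theorem~\ref{th:gleason-yamabe} then makes $G_e$ a Lie group, and since $e$ is arbitrary, $S$ is of Lie type in the sense of Definition~\ref{def:lie-type}.

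To get local compactness of $G_e$, I will show that $G_e$ is a closed subset of $S$. By Lemma~\ref{lem:pi-continuous}, $G_e=\pi^{-1}(\{e\})$, where $\pi\colon S\to E(S)$ is continuous. Since $E(S)$ is $T_2$ in the induced topology, singletons are closed in $E(S)$, so $G_e$ is closed in $S$. A closed subspace of a locally compact space is locally compact.

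The only subtle point is which notion of local compactness is being used. If "locally compact" only means that every point has a compact neighbourhood, then closedness of $G_e$ gives the following. For $x\in G_e$, take a compact neighbourhood $K$ of $x$ in $S$. Then $K\cap G_e$ is a closed subset of $K$, hence compact, and it is a neighbourhood of $x$ in $G_e$.

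Gleason--Yamabe is usually stated for Hausdorff groups. I would therefore note that $G_e$ can be assumed Hausdorff, or argue it as follows. A topological group is Hausdorff as soon as $\{e\}$ is closed, and $\{e\}=E(S)\cap G_e$. Since $E(S)$ is $T_2$, $\{e\}$ is closed in $E(S)$; as $E(S)$ is closed in $S$ when $S$ is Hausdorff, $\{e\}$ is then closed in $G_e$. I expect this Hausdorffness bookkeeping to be the main obstacle. If $S$ is not assumed Hausdorff, I would instead use the standard convention that locally compact groups in Gleason--Yamabe are Hausdorff, or observe that NSS already forces the closure of $\{e\}$, which is a subgroup contained in every neighbourhood of $e$, to be trivial. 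The last observation settles the matter: $G_e$ is $T_1$, hence Hausdorff, and the rest is routine.
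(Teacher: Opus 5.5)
Your proposal is correct and follows the paper's argument: $G_e=\pi^{-1}(\{e\})$ is closed in $S$ because singletons of $E(S)$ are closed, so $G_e$ is locally compact, and Gleason--Yamabe applies to the NSS group $G_e$. Your additional remark that NSS forces $\overline{\{e\}}$, a subgroup lying in every neighbourhood of $e$, to be trivial shows that $G_e$ is Hausdorff without assuming $S$ is, which makes explicit a point the paper leaves implicit.
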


\begin{proof}
Let $e\in E(S)$. Since $S$ is locally compact and $G_e$ is closed in $S$, the
group $G_e$ is locally compact in the subspace topology. By assumption, $G_e$
is NSS. Therefore Theorem~\ref{th:gleason-yamabe} applies and yields that $G_e$
is a Lie group. Since $e$ was arbitrary, $S$ is of Lie type.
\end{proof}

We also record a Gleason--Palais-type criterion.

\begin{theorem}\label{th:arc-connected-clifford}
Let $S$ be a topological Clifford semigroup. Assume that:
\begin{enumerate}
\item $S$ is weakly locally Euclidean at the idempotents;
\item for every $e\in E(S)$, the maximal subgroup $G_e$ is locally
arcwise connected.
\end{enumerate}
Then $S$ is of Lie type.
\end{theorem}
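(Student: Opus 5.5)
We plan to apply the Gleason--Palais criterion (Theorem~\ref{th:gleason-palais}) to each maximal subgroup separately. Fix $e\in E(S)$. By hypothesis (2), $G_e$ with the subspace topology is a locally arcwise connected topological group. So the only thing left is a neighbourhood of the identity $e$ in $G_e$ that maps continuously and injectively into a finite-dimensional metric space.

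To get this, we use hypothesis (1). It provides an open neighbourhood $U_e$ of $e$ in $S$ and a homeomorphism $\varphi_e\colon U_e\to V_e\subseteq\mathbb R^{n(e)}$. Set $W_e:=U_e\cap G_e$. This is an open neighbourhood of $e$ in $G_e$ for the subspace topology. The restriction $\varphi_e|_{W_e}\colon W_e\to\mathbb R^{n(e)}$ is continuous and injective, being the restriction of a homeomorphism. Its target $\mathbb R^{n(e)}$, with the Euclidean metric, is a finite-dimensional metric space. If one prefers the image to be the relevant space, $\varphi_e(W_e)$ is a subspace of $\mathbb R^{n(e)}$, hence metric of covering dimension at most $n(e)$.

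Theorem~\ref{th:gleason-palais} then yields that $G_e$ is a Lie group. Since $e$ was arbitrary, $S$ is of Lie type in the sense of Definition~\ref{def:lie-type}. Finite-dimensionality is automatic for a Lie group with a neighbourhood of the identity embedding injectively into $\mathbb R^{n(e)}$: invariance of domain forces $\dim G_e\le n(e)$.

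The main point to handle carefully is that, unlike in Theorem~\ref{th:hilbert-clifford}, $G_e$ need not be open in $S$. So $W_e$ need not be homeomorphic to an open subset of Euclidean space, and Theorem~\ref{th:hilbert5} cannot be applied directly. This is exactly why the weaker Gleason--Palais hypothesis (a continuous injection into a finite-dimensional space) is used instead of local Euclideanity. The local arc-connectedness assumption on $G_e$ compensates for this missing openness. Beyond this observation, the argument is a direct verification of the hypotheses of Theorem~\ref{th:gleason-palais}.
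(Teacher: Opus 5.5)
Your proposal is correct and follows the paper's proof essentially verbatim. You restrict the Euclidean chart at $e$ to the open identity neighbourhood $U_e\cap G_e$ of $G_e$, which gives a continuous injection into $\mathbb{R}^{n(e)}$, and then invoke Gleason--Palais using local arcwise connectedness. Your extra remarks are correct additions that the paper leaves implicit: that $G_e$ need not be open in $S$, and the invariance-of-domain bound $\dim G_e\le n(e)$.
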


\begin{proof}
Let $e\in E(S)$. By assumption, there exist an open neighbourhood $U_e$ of $e$
in $S$ and a homeomorphism
\[
\varphi_e\colon U_e\to V_e\subseteq \mathbb R^{n(e)},
\]
where $V_e$ is open. Restricting $\varphi_e$ to $U_e\cap G_e$, we obtain a
continuous injective map
\[
U_e\cap G_e \longrightarrow \mathbb R^{n(e)}.
\]
Since $U_e\cap G_e$ is an open neighbourhood of the identity in the topological
group $G_e$, and $G_e$ is locally arcwise connected by hypothesis,
Theorem~\ref{th:gleason-palais} applies and shows that $G_e$ is a Lie group.
Since $e$ was arbitrary, $S$ is of Lie type.
\end{proof}

The previous results show that several classical Lie-group criteria extend
naturally to the level of maximal subgroups of a Clifford semigroup. This is
the correct analogue of Hilbert's fifth problem in the present setting.

We conclude with a simple example illustrating why one should not expect a
global smooth or Lie structure on an arbitrary topological Clifford semigroup.

\begin{example}\label{ex:min-example}
Consider $\mathbb R^2$ endowed with the operation
\[
(e,s)\cdot(f,t):=(\min\{e,f\},\,s+t).
\]
Then $\mathbb R^2$ is a trivial topological Clifford semigroup. Its idempotents
form the semilattice
\[
E(\mathbb R^2)=\mathbb R\times\{0\},
\]
whose multiplication is induced by the minimum operation on the first
coordinate. This example shows that even very simple topological Clifford semigroups may
fail to admit a compatible manifold-type structure in the idempotent direction,
so that a global Lie-semigroup structure is not to be expected in general.
\end{example}

\section{$C^1$-regularity at idempotents}\label{sec:c1}

As already noted in the previous section, differentiability for semigroups is
considerably subtler than for groups. In a topological group, local smooth
charts can be transported by translations, which are homeomorphisms. In a semigroup this is no longer true, in general, and therefore even a smooth local
description near one point need not propagate to nearby points.

For this reason, one should distinguish between two different settings:
\begin{enumerate}
\item a \emph{global} differentiable-semigroup structure on the whole space;
\item a \emph{local} differentiability condition imposed only near the
idempotents.
\end{enumerate}
The first point of view goes back to Holmes, whose results on differentiable
semigroups provide useful information on the local structure of idempotents. The
second point of view is the one relevant for the rigidity results proved in this
paper.

\subsection{Holmes' differentiable semigroups}

We begin by recalling Holmes' global notion.

\begin{definition}\label{def:holmes-semigroup}
A \emph{$C^1$-differentiable semigroup} is a topological semigroup $(S,\cdot)$
such that:
\begin{enumerate}
\item $S$ is a $C^1$ manifold modelled on a Banach space;
\item the multiplication map
\[
m\colon S\times S\to S,\qquad m(x,y)=xy,
\]
is of class $C^1$.
\end{enumerate}
\end{definition}

Holmes proved several structural results on the idempotent set of a
$C^1$-differentiable semigroup (see \cite{Holmes}). For our purposes, the
following local consequence is sufficient.

\begin{proposition}\label{prop:holmes-control}
Let $S$ be a $C^1$-differentiable semigroup, and let $e\in E(S)$. Then there
exists an open neighbourhood $U_e$ of $e$ such that for every
$f\in E(S)\cap U_e$ and every $x\in U_e$, one has
\[
fxf\in G_f,
\]
where $G_f$ denotes the maximal subgroup at $f$.
\end{proposition}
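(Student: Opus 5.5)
The plan is to work in a single Banach chart around $e$ and obtain invertibility of $fxf$ in the monoid $fSf$ from an implicit-function argument with parameters $(f,x)$. Recall that $fSf$ is a monoid with identity $f$, and that $G_f$ is exactly its group of units. So it suffices to produce $y,z\in fSf$ with
\[
(fxf)\,y=f=z\,(fxf).
\]
A monoid element with both a left and a right inverse is a unit, and this gives $fxf\in G_f$.

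\textbf{Step 1 (local structure of $fSf$).} For each idempotent $f$, consider the map $p_f\colon S\to S$, $p_f(y)=fyf$. It is $C^1$, since it is a composition of multiplications, and it satisfies $p_f\circ p_f=p_f$. Hence $p_f$ is a $C^1$ retraction onto $fSf$. Its derivative $P_f:=dp_f(f)$ is a bounded idempotent operator on the model space. By the standard fact that the image of a $C^1$ idempotent retraction is a $C^1$ submanifold near each of its fixed points, $fSf$ is, near $f$, a $C^1$ submanifold $M_f$ with $T_fM_f=\operatorname{im}P_f$. This is Holmes' local description of the idempotent locus, and I will use it only in this form.

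\textbf{Step 2 (uniformity in $f$).} Since the multiplication is $C^1$ jointly, the map $(f,y)\mapsto p_f(y)$ is $C^1$ in both variables, and $dp_f$ depends continuously on $f$. So for $f\in E(S)$ close to $e$, the restriction $\psi_f:=p_f|_{M_e}$ has derivative at $e$ close to $P_eP_f|_{\operatorname{im}P_e}$. That operator is close to the identity on $\operatorname{im}P_e$. By the inverse function theorem with parameters, one obtains a fixed neighbourhood $N$ of $e$ in $M_e$ such that, for all such $f$, $\psi_f$ is a $C^1$ diffeomorphism of $N$ onto a neighbourhood of $\psi_f(e)$ in $M_f$. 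One also needs $f\in\psi_f(N)$; this holds because $\psi_f(e)=fef\to e$ and $f\to e$.

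\textbf{Step 3 (solving for inverses).} Define
\[
\Phi(f,x,w):=\psi_f^{-1}\bigl(p_f(x)\,\psi_f(w)\bigr)-\psi_f^{-1}(f),\qquad w\in N.
\]
Here the product $p_f(x)\psi_f(w)=fxf\cdot fwf$ lies in $fSf$, and it lies in the domain of $\psi_f^{-1}$ for $(f,x,w)$ near $(e,e,e)$. At $(e,e,\cdot)$ the map $w\mapsto \psi_e^{-1}(e\,\psi_e(w))$ is the identity on $N$. So $\partial_w\Phi(e,e,e)=\mathrm{id}$ on $\operatorname{im}P_e$, and clearly $\Phi(e,e,e)=0$. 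The implicit function theorem, with $(f,x)$ treated as continuous parameters and $C^1$ dependence in $w$ uniformly, then yields a neighbourhood $U_e$ of $e$ and, for $f\in E(S)\cap U_e$ and $x\in U_e$, a point $w$ with $\Phi(f,x,w)=0$. Setting $y=\psi_f(w)\in fSf$ gives $(fxf)y=f$. The symmetric argument, with multiplication on the left, gives $z$ with $z(fxf)=f$. Intersecting the two neighbourhoods completes the proof.

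The main obstacle is Step 2. The implicit function theorem must be applied on the varying submanifolds $fSf$ with a neighbourhood independent of $f$. A naive application gives a neighbourhood depending on $f$, which is useless here. I would handle this by transporting everything to the fixed space $\operatorname{im}P_e$ via $\psi_f$, so that $f$ appears only as a parameter. I would then invoke the parametric inverse and implicit function theorems, using continuity of $f\mapsto dp_f$, which follows from the $C^1$ hypothesis on $m$.
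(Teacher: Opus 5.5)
The paper does not prove this proposition: it attributes it to Holmes and uses it as a black box, so your argument is an independent route. Your reduction is right. $G_f$ is the unit group of the monoid $fSf$, so it suffices to find one-sided inverses of $fxf$, and at $f=x=e$ this is a perturbation of the identity of $eSe$.

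The gap is in Step~2, where you yourself locate the difficulty, and invoking the inverse function theorem with parameters does not close it. Step~1 only says that $fSf$ is a $C^1$ submanifold on some neighbourhood of $f$ whose size depends on $f$. The targets $M_f$ vary with $f$, so there is no fixed pair of spaces to which a parametric inverse function theorem applies. What you can extract, for instance from $P_e\circ\psi_f$ in a chart at $e$, is a uniform $N$ on which $\psi_f$ is injective.

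What you actually use is stronger: $\psi_f(N)$ must contain $fSf\cap B$ for a neighbourhood $B$ of $e$ independent of $f$. Both ``$f\in\psi_f(N)$ because $fef\to e$ and $f\to e$'' and ``$fxf\cdot\psi_f(w)$ lies in the domain of $\psi_f^{-1}$'' rest on this. Being close to $\psi_f(e)$ in $S$ does not put a point of $fSf$ on the sheet $\psi_f(N)$, unless you already know that $fSf$ is, near $e$, a single $C^1$ sheet of uniform size. Without that, even $fef$ need not lie in the $f$-dependent neighbourhood produced by Step~1.

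There is also a smaller slip: $D\psi_f(e)=Dp_f(e)|_{\operatorname{im}P_e}$. Since $P_f=Dp_f(f)$ is taken at a different base point, ``$P_eP_f$'' is not the relevant operator. Your conclusion that it is close to the inclusion of $\operatorname{im}P_e$ is still correct, by joint continuity of $Dm$.

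The missing ingredient is a uniform graph lemma, and it is elementary.
Work in a chart $\phi$ at $e$ with $\phi(e)=0$. Let $q(a,z)$ be the coordinate expression of $(s,t)\mapsto sts$, and let $P:=D_2q(0,0)$, which is a bounded projection. Choose $\delta>0$ with $\|D_2q(a,z)-P\|\le\frac12$ on $B_\delta\times B_\delta$.

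Now let $f$ be idempotent with $a=\phi(f)\in B_\delta$, and take $z_1,z_2\in\phi(fSf)\cap B_\delta$. Then $z_i=q(a,z_i)$. The mean value inequality along the segment gives $\|(I-P)(z_1-z_2)\|\le\frac12\|z_1-z_2\|$, hence $\|P(z_1-z_2)\|\ge\frac12\|z_1-z_2\|$. So the linear map $P$ is injective on $\phi(fSf)\cap B_\delta$, uniformly in $f$.

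With this lemma you can drop $\psi_f$ and $\Phi$ altogether.
Apply the inverse function theorem with continuous parameters $(f,x)$ to the map $u\mapsto P\,\phi\bigl(fxf\,\phi^{-1}(u)\,f\bigr)$ on a small ball of $\operatorname{im}P$. Its derivative at $f=x=e$, $u=0$ is the identity of $\operatorname{im}P$. This gives $u$ with $P\,\phi(fxf\,y\,f)=P\,\phi(f)$, where $y=\phi^{-1}(u)$.

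Both $fxf\,y\,f$ and $f$ lie in $fSf$ near $e$, so injectivity gives $(fxf)(fyf)=f$. The left inverse is obtained symmetrically, and the two together give $fxf\in G_f$.
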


As a first consequence, one obtains discreteness of the idempotent set whenever
idempotents commute.

\begin{corollary}\label{cor:holmes-discrete-idempotents}
Let $S$ be a $C^1$-differentiable semigroup such that idempotents commute
pairwise. Then $E(S)$ is discrete in $S$.
\end{corollary}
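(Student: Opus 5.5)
The plan is to deduce the corollary directly from Proposition~\ref{prop:holmes-control}, using the commutation of idempotents to reduce $fxf$ to a simpler expression. Fix $e\in E(S)$ and let $U_e$ be the open neighbourhood of $e$ provided by Proposition~\ref{prop:holmes-control}. The goal is to show that $E(S)\cap U_e=\{e\}$. This says exactly that $\{e\}$ is open in the subspace $E(S)$. Since $e$ is arbitrary, $E(S)$ is then discrete in $S$.

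So let $f\in E(S)\cap U_e$. Apply Proposition~\ref{prop:holmes-control} with this idempotent $f$ and with the point $x:=e$, which is legitimate because $e\in U_e$. We obtain $fef\in G_f$. Since idempotents commute pairwise and $f^2=f$, we have $fef=ef$, so $ef\in G_f$. But $ef$ is itself an idempotent, because $(ef)^2=efef=eeff=ef$. The only idempotent in the group $G_f$ is its identity $f$, so $ef=f$, that is, $f\le e$.

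To get the reverse inequality, run the same argument with the roles of $e$ and $f$ exchanged. The neighbourhood $U_e$ is attached to $e$, not to $f$, so we cannot simply swap them. Instead, apply Proposition~\ref{prop:holmes-control} at the idempotent $f$ to obtain a neighbourhood $U_f$ of $f$, and try to ensure $e\in U_f$. This step is the main obstacle, because the proposition controls neighbourhoods only around the base idempotent.

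There are two routes around it. The first is to shrink $U_e$ and exploit the symmetric form of the argument directly. Take $x:=f\in U_e$ and the idempotent $e$ itself, which lies in $E(S)\cap U_e$. Then $efe\in G_e$, which gives $ef\in G_e$, and since $ef$ is an idempotent, $ef=e$. Thus $e\le f$. Combined with $f\le e$ from the previous step, this gives $e=f$, and no change of base point is needed at all.

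So the apparent obstacle disappears once both $e$ and $f$ are used as the idempotent in the proposition with the same neighbourhood $U_e$. The only things to check are that $e$ itself belongs to $E(S)\cap U_e$, which holds trivially, and that $f\in U_e$ is admissible as the point $x$, which holds by the choice of $f$. This completes the argument that $E(S)\cap U_e=\{e\}$.
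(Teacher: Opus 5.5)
Your argument is correct and is essentially the paper's: both apply Proposition~\ref{prop:holmes-control} with the single neighbourhood $U_e$, once with idempotent $f$ and $x=e$ and once with idempotent $e$ and $x=f$, to get $ef\in G_f\cap G_e$. You then conclude $e=f$ because each group has a unique idempotent, while the paper uses disjointness of the maximal subgroups; the paragraph about changing the base point, the announced ``two routes'' (only one appears), and the mention of shrinking $U_e$ are unnecessary and can be deleted.
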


\begin{proof}
Fix $e\in E(S)$, and let $U_e$ be as in
Proposition~\ref{prop:holmes-control}. We claim that
\[
E(S)\cap U_e=\{e\}.
\]
Let $f\in E(S)\cap U_e$. Applying Proposition~\ref{prop:holmes-control} with
$x=e$, we get
\[
fef\in G_f.
\]
Since idempotents commute, this becomes
\[
ef=fe=ffe = fef\in G_f.
\]
On the other hand, again by Proposition~\ref{prop:holmes-control}, now with the
roles of $e$ and $f$ exchanged,
\[
ef = efe\in G_e.
\]
Thus $ef$ belongs both to $G_e$ and to $G_f$. Since the maximal subgroups are
pairwise disjoint, it follows that $e=f$. Therefore
\[
E(S)\cap U_e=\{e\},
\]
so $e$ is isolated in $E(S)$. Since $e$ was arbitrary, $E(S)$ is discrete.
\end{proof}

In the finite-dimensional inverse-semigroup setting, this already implies a Lie
structure on the maximal subgroups.

\begin{corollary}\label{cor:finite-dim-inverse-lie}
Let $S$ be a finite-dimensional $C^1$-differentiable Clifford semigroup. Then,
for every $e\in E(S)$, the maximal subgroup $G_e$ is an open submanifold of
$S$. In particular, each $G_e$ is a finite-dimensional Lie group and $S$ is a strong semilattice of Lie groups.
\end{corollary}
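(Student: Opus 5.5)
The plan is to combine Corollary~\ref{cor:holmes-discrete-idempotents} with Proposition~\ref{prop:mp-equivalences}. Since $S$ is a Clifford semigroup, its idempotents are central, so in particular they commute pairwise. Corollary~\ref{cor:holmes-discrete-idempotents} then gives that $E(S)$ is discrete in $S$. By the implication (5)$\Rightarrow$(4) of Proposition~\ref{prop:mp-equivalences}, this already yields that every maximal subgroup $G_e=\pi^{-1}(\{e\})$ is open in $S$. The continuity of $\pi$ used there is Lemma~\ref{lem:pi-continuous}; only continuity of multiplication and inversion is needed, and both hold since $m$ is $C^1$. For inversion I would note that in a Clifford semigroup continuity of $x\mapsto x^{-1}$ is part of the standing definition of topological Clifford semigroup, so I will take $S$ to be a topological Clifford semigroup in that sense.

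Next, an open subset of a $C^1$ manifold modelled on $\mathbb R^n$ is an open $C^1$ submanifold of dimension $n$. Thus each $G_e$ is a finite-dimensional $C^1$ manifold. The group multiplication on $G_e$ is the restriction of $m$ to the open set $G_e\times G_e$, which it maps into $G_e$, so it is $C^1$. Hence $G_e$ is a $C^1$ group, and in particular a locally Euclidean topological group. By Theorem~\ref{th:hilbert5} (or the classical fact that $C^1$ groups are Lie groups), $G_e$ is a finite-dimensional Lie group. Here the real-analytic structure may differ from the given $C^1$ structure, but it is compatible with the topology, which is all that Definition~\ref{def:lie-type} requires.

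Finally, since the $G_e$ are open and disjoint, Proposition~\ref{prop:mp-equivalences} ((4)$\Rightarrow$(1)) shows that $S$ is the topological sum of the $G_e$, that is, a strong semilattice of topological groups. The bonding maps $\varphi_{f,e}(x)=ex$ are restrictions of the $C^1$ map $x\mapsto ex$, so they are continuous homomorphisms between Lie groups and therefore automatically smooth. Hence $S$ is a strong semilattice of Lie groups.

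The main obstacle I anticipate is purely a matter of bookkeeping: checking that $G_e$ is not merely a topological group but that its topology is locally Euclidean, which follows from openness. There is also the mild point of ensuring that inversion is continuous so that $\pi$ is continuous. If the statement is read with $S$ only a differentiable semigroup, I would instead obtain the openness of $G_e$ directly. By Proposition~\ref{prop:holmes-control}, for $x\in U_e$ we have $exe=ex\in G_e$, and the map $x\mapsto x^0$ can be controlled by noting that $x\in U_e\cap G_f$ with $f$ near $e$ forces $f=e$ by discreteness.
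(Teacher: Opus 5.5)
Your main argument is correct and follows the paper's proof: $E(S)$ is discrete by Corollary~\ref{cor:holmes-discrete-idempotents}, then $G_e=\pi^{-1}(\{e\})$ is open by continuity of $\pi$ (which genuinely needs continuity of inversion, as the paper also uses), and then Theorem~\ref{th:hilbert5} and Proposition~\ref{prop:mp-equivalences} finish. Drop the fallback in your last paragraph, though: $(\mathbb R,\cdot)$ is a smooth semigroup that is algebraically Clifford with $E=\{0,1\}$ discrete, yet $G_0=\{0\}$ is not open, so without continuity of inversion the conclusion is false (Proposition~\ref{prop:holmes-control} only gives $ex\in G_e$, i.e.\ $e\le x^0$, not $x^0=e$).
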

\begin{proof}
By Corollary~\ref{cor:holmes-discrete-idempotents}, the idempotent set $E(S)$
is discrete in $S$. Since inversion and multiplication are continuous, the map
\[
\pi\colon S\to E(S),\qquad \pi(x)=xx^{-1}
\]
is continuous and, for each $e\in E(S)$ one has
\[
G_e=\pi^{-1}(\{e\}). 
\]
Because $E(S)$ is discrete, the singleton $\{e\}$ is open in $E(S)$, hence
$G_e$ is open in $S$. Since $S$ is a finite-dimensional $C^1$ manifold, $G_e$
is an open finite-dimensional manifold. Being also a topological group, it is a
finite-dimensional Lie group by Theorem~\ref{th:hilbert5}. By Proposition~\ref{prop:mp-equivalences}, $S$ is a strong semilattice
of Lie groups.
\end{proof}

%

The preceding results are global in nature, since they assume that the whole
space carries a manifold structure and that multiplication is globally $C^1$.
For topological Clifford semigroups this is often too restrictive. In
particular, even if the maximal subgroups are Lie groups, there is no reason for
the whole space to admit a compatible manifold structure.

This motivates the following weaker, purely local notion, introduced in the following subsection.

\subsection{$C^1$-regularity at idempotents}

\begin{definition}\label{def:c1-idempotents}
Let $S$ be a topological Clifford semigroup with multiplication
\[
m\colon S\times S\to S,\qquad m(x,y)=xy.
\]
We say that $S$ is \emph{$C^1$ at the idempotents} if for every
$e\in E(S)$ there exist:
\begin{enumerate}
\item a Banach space $X_e$;
\item open neighbourhoods
\[
e\in U_e\subseteq W_e\subseteq S;
\]
\item a homeomorphism
\[
\varphi_e\colon W_e\to V_e\subseteq X_e,
\qquad
\varphi_e(e)=0,
\]
where $V_e$ is open in $X_e$;
\end{enumerate}
such that
\[
m(U_e\times U_e)\subseteq W_e,
\]
and the coordinate representation of multiplication
\[
\widetilde\mu_e\colon \varphi_e(U_e)\times \varphi_e(U_e)\to X_e,
\qquad
\widetilde\mu_e(u,v):=
\varphi_e\!\bigl(\varphi_e^{-1}(u)\varphi_e^{-1}(v)\bigr),
\]
is of class $C^1$ in the Fr\'echet sense.
\end{definition}

This condition only requires differentiability of the multiplication near pairs
$(e,e)$ with $e\in E(S)$; no global manifold structure on $S$ is assumed.

The main result of this section is the following rigidity statement.

\begin{theorem}\label{th:c1-idempotents-discrete}
Let $S$ be a topological Clifford semigroup which is $C^1$ at the idempotents.
Then the idempotent semilattice $E(S)$ is discrete in $S$. In particular, $S$
is a strong semilattice of topological groups.
\end{theorem}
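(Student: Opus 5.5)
The plan is to linearize the multiplication in the chart at an idempotent and to show that the idempotent equation $f^2=f$ has no solutions near $e$ other than $e$ itself. Once every idempotent is isolated in $E(S)$, Proposition~\ref{prop:mp-equivalences} ((5)$\Rightarrow$(1)) gives that $S$ is a strong semilattice of topological groups.

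Fix $e\in E(S)$ and take $X_e,U_e,W_e,\varphi_e,\widetilde\mu:=\widetilde\mu_e$ as in Definition~\ref{def:c1-idempotents}. Since $e^2=e$ and $\varphi_e(e)=0$, we have $\widetilde\mu(0,0)=0$.

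\textbf{Step 1: the two partial derivatives agree.} Set
\[
A:=D_1\widetilde\mu(0,0),\qquad B:=D_2\widetilde\mu(0,0),
\]
both bounded linear operators on $X_e$. Because $e$ is central, $ex=xe$ for all $x\in U_e$. Hence $\widetilde\mu(0,v)=\widetilde\mu(v,0)$ for $v\in\varphi_e(U_e)$, and therefore $A=B=:P$.

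\textbf{Step 2: $P$ is idempotent.} The identity $e(ex)=ex$ holds for all $x$. By continuity of multiplication, $ex\in U_e$ whenever $x$ is close enough to $e$. So near $0$ we have
\[
\widetilde\mu\bigl(0,\widetilde\mu(0,v)\bigr)=\widetilde\mu(0,v).
\]
The map $v\mapsto\widetilde\mu(0,v)$ is differentiable at $0$ and sends $0$ to $0$. The chain rule at $v=0$ then gives $P^2=P$. This is the only place where some care is needed: we must shrink the neighbourhood so that all compositions stay inside $\varphi_e(U_e)$. Only Fréchet differentiability at the single point $(0,0)$ is used, not the full $C^1$ hypothesis.

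\textbf{Step 3: no other idempotents near $e$.} Let $f\in E(S)\cap U_e$ and put $u:=\varphi_e(f)$. Then $\widetilde\mu(u,u)=u$. Fréchet differentiability at $(0,0)$ gives
\[
u=\widetilde\mu(u,u)=Pu+Pu+r(u),\qquad \|r(u)\|=o(\|u\|).
\]
Applying $P$ and using $P^2=P$ yields $Pu=2Pu+Pr(u)$, so $Pu=-Pr(u)$. Substituting back gives
\[
u=-2Pr(u)+r(u),\qquad\text{so}\qquad \|u\|\le(2\|P\|+1)\,\|r(u)\|.
\]
Choose $\delta>0$ so that $\|r(u)\|\le \|u\|/(2(2\|P\|+1))$ whenever $\|u\|<\delta$. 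For such $u$ we get $\|u\|\le\|u\|/2$, hence $u=0$ and $f=e$.

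Therefore the open set $\varphi_e^{-1}\bigl(\{\|u\|<\delta\}\cap\varphi_e(U_e)\bigr)$ meets $E(S)$ only in $e$, so $e$ is isolated in $E(S)$. Since $e$ was arbitrary, $E(S)$ is discrete, and Proposition~\ref{prop:mp-equivalences} concludes the proof.
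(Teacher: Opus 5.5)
Your proof is correct. Steps 1 and 2 match the paper's linearization. The paper also sets $L(h)=D\mu_e(0,0)(h,0)$ and $R(h)=D\mu_e(0,0)(0,h)$, and differentiates $\lambda\circ\lambda=\lambda$ and $\rho\circ\rho=\rho$ for $\lambda(u)=\mu_e(0,u)$, $\rho(u)=\mu_e(u,0)$, to get two projections. It then uses centrality of $e$ to get $L=R=P$. Your remark about shrinking so that $\widetilde\mu(0,v)$ stays in $\varphi_e(U_e)$ makes explicit a domain issue the paper passes over.

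The routes differ in the last step.

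\textbf{The paper's route.} It considers $H(u)=\widetilde\mu(u,u)-u$ and notes that $DH(0)=2P-I$ is an involution, hence invertible. It then invokes the Banach inverse function theorem: $H$ is a local homeomorphism at $0$, so $0$ is its only zero nearby.

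\textbf{Your route.} You invert $2P-I$ by hand inside the first-order expansion. Your identity $u=-2Pr(u)+r(u)$ is exactly $u=-(2P-I)r(u)$. This gives $\|u\|\le(2\|P\|+1)\|r(u)\|$, and hence $u=0$ for small $u$. This is the elementary fact that a zero at which a map is Fr\'echet differentiable with invertible derivative is isolated.

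\textbf{What each buys.} Your argument uses only differentiability of $\widetilde\mu$ at the single point $(0,0)$. It needs no continuity of the derivative and not even completeness of $X_e$. So it proves the theorem under a weaker regularity assumption than being $C^1$ at the idempotents. The inverse function theorem gives more, namely local invertibility of $H$, but only the isolation of the zero is needed for discreteness.
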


\begin{proof}
By Proposition~\ref{prop:mp-equivalences}, it is enough to prove that $E(S)$ is
discrete.
Let $e\in E(S)$. Choose a Banach space $X_e$, neighbourhoods
\[
e\in U_e\subseteq W_e\subseteq S,
\]
and a chart
\[
\varphi_e\colon W_e\to V_e\subseteq X_e,
\qquad \varphi_e(e)=0,
\]
as in Definition~\ref{def:c1-idempotents}. Set
\[
V'_e:=\varphi_e(U_e),
\]
which is an open neighbourhood of $0$ in $X_e$.
Define the local squaring map
\[
F\colon U_e\to W_e,
\qquad
F(x):=x^2.
\]
By assumption, $F$ is well defined, and its coordinate expression
\[
f:=\varphi_e\circ F\circ \varphi_e^{-1}\colon V'_e\to X_e
\]
is of class $C^1$. Since $e^2=e$, one has
\[
f(0)=0.
\]

Now a point $x\in U_e$ is idempotent if and only if
$x^2=x$.
In coordinates, this means that for
\[
u=\varphi_e(x)\in V'_e
\]
one has
\[
f(u)=u.
\]
Thus idempotents in $U_e$ correspond exactly to fixed points of $f$.

Define
\[
H\colon V'_e\to X_e,
\qquad
H(u):=f(u)-u.
\]
We claim that $0$ is an isolated zero of $H$.

To prove this, let
\[
\mu_e\colon V'_e\times V'_e\to X_e
\]
be the coordinate expression of multiplication:
\[
\mu_e(u,v):=
\varphi_e\!\bigl(\varphi_e^{-1}(u)\varphi_e^{-1}(v)\bigr).
\]
Then
\[
f(u)=\mu_e(u,u).
\]
Since $\mu_e$ is $C^1$, its derivative at $(0,0)$ is a continuous linear map
\[
D\mu_e(0,0)\colon X_e\times X_e\to X_e.
\]
Define continuous linear operators
\[
L,R\in\mathcal L(X_e,X_e)
\]
by
\[
L(h):=D\mu_e(0,0)(h,0),
\qquad
R(h):=D\mu_e(0,0)(0,h).
\]
Then, for every $h,k\in X_e$,
\[
D\mu_e(0,0)(h,k)=L(h)+R(k).
\]
In particular, by the chain rule,
\[
Df(0)h=D\mu_e(0,0)(h,h)=L(h)+R(h).
\]

Now consider the coordinate versions of left and right multiplication by $e$:
\[
\lambda(u):=\mu_e(0,u),
\qquad
\rho(u):=\mu_e(u,0).
\]
Since $e$ is idempotent, one has
\[
\lambda\circ\lambda=\lambda,
\qquad
\rho\circ\rho=\rho.
\]
Differentiating at $0$ gives
\[
R^2=R,
\qquad
L^2=L,
\]
so both $L$ and $R$ are projections.
Because $S$ is Clifford, idempotents are central. In particular,
$ex=xe$
for all $x$ near $e$, and therefore
$\lambda=\rho$.
Hence
\[
L=R=:P,
\]
where $P$ is a projection. It follows that
\[
Df(0)=2P.
\]
Therefore
\[
DH(0)=Df(0)-I=2P-I.
\]
Since $P^2=P$, we compute
\[
(2P-I)^2=4P^2-4P+I=I.
\]
Thus $2P-I$ is invertible, with inverse itself. In particular, $DH(0)$ is a
linear isomorphism.

By the Banach inverse function theorem, there exists an open neighbourhood
\[
\Omega_e\subseteq V'_e
\]
of $0$ such that
\[
H|_{\Omega_e}
\]
is a homeomorphism onto an open neighbourhood of $0$ in $X_e$. Since $H(0)=0$,
it follows that $0$ is the unique zero of $H$ in $\Omega_e$.
Set
\[
U'_e:=\varphi_e^{-1}(\Omega_e).
\]
Then $U'_e$ is an open neighbourhood of $e$ in $S$, and the only idempotent it
contains is $e$. Hence
\[
E(S)\cap U'_e=\{e\}.
\]
Since $e$ was arbitrary, $E(S)$ is discrete in $S$.
The final assertion follows from Proposition~\ref{prop:mp-equivalences}.
\end{proof}

The preceding theorem shows that local differentiability at idempotents has a
global structural consequence: it forces the topology of $S$ to split as the
disjoint union topology of its maximal subgroups.

\begin{corollary}\label{cor:c1-plus-wle-lie}
Let $S$ be a topological Clifford semigroup which is $C^1$ at the idempotents
and weakly locally Euclidean at the idempotents. Then $S$ is of Lie type.
\end{corollary}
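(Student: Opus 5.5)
The plan is to chain Theorem~\ref{th:c1-idempotents-discrete} with Theorem~\ref{th:hilbert-clifford}. The differentiability assumption gives the splitting into open maximal subgroups. The locally Euclidean assumption then makes each group locally Euclidean, and Hilbert's fifth problem finishes the argument.

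\emph{Step 1.} Since $S$ is $C^1$ at the idempotents, Theorem~\ref{th:c1-idempotents-discrete} shows that $E(S)$ is discrete in $S$. Consequently $S$ is a strong semilattice of topological groups. By Proposition~\ref{prop:mp-equivalences}, every maximal subgroup $G_e$ is open in $S$, and the topology of $S$ is the topological sum of the $G_e$.

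\emph{Step 2.} Now $S$ is a strong semilattice of topological groups and is weakly locally Euclidean at the idempotents, so Theorem~\ref{th:hilbert-clifford} applies directly. Concretely, fix $e\in E(S)$ and take a chart $\varphi_e\colon U_e\to V_e\subseteq\mathbb R^{n(e)}$ with $V_e$ open.
\begin{itemize}
\item Because $G_e$ is open in $S$, the set $U_e\cap G_e$ is an open neighbourhood of $e$ in $G_e$.
\item Its image $\varphi_e(U_e\cap G_e)$ is open in $V_e$, hence open in $\mathbb R^{n(e)}$.
\item Left translations in $G_e$ are homeomorphisms of $G_e$, so they carry this Euclidean neighbourhood of the identity to every point of $G_e$. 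Thus $G_e$ is locally Euclidean.
\item Theorem~\ref{th:hilbert5} then makes $G_e$ a Lie group. Its dimension is $n(e)$, so it is finite-dimensional, as Definition~\ref{def:lie-type} requires.
\end{itemize}

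There is no real obstacle here. The one point that needs care is that openness of $G_e$, supplied by Step~1, is exactly what makes $U_e\cap G_e$ open in the chart. Without it, $G_e\cap U_e$ could be a lower-dimensional, non-open piece of the chart domain, and Hilbert's fifth problem could not be invoked.
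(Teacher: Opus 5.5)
Your proposal is correct and matches the paper's proof: Theorem~\ref{th:c1-idempotents-discrete} shows that $S$ is a strong semilattice of topological groups, and Theorem~\ref{th:hilbert-clifford} then gives Lie type. Your Step~2 simply unpacks the proof of Theorem~\ref{th:hilbert-clifford}, and your translation argument, which spreads the Euclidean neighbourhood of $e$ over all of $G_e$ and fixes the dimension at $n(e)$, makes explicit a step the paper leaves implicit.
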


\begin{proof}
By Theorem~\ref{th:c1-idempotents-discrete}, $S$ is a strong semilattice of
topological groups. Since $S$ is weakly locally Euclidean at the idempotents,
Theorem~\ref{th:hilbert-clifford} applies and yields that $S$ is of Lie type.
\end{proof}

\subsection{Finite-dimensional consequences}

We now combine the rigidity theorem with the Hilbert-fifth-type results proved
in Section~\ref{sec:hilbert}.

\begin{corollary}\label{cor:c1-idempotents-lie}
Let $S$ be a topological Clifford semigroup which is $C^1$ at the idempotents,
and assume that each chart in Definition~\ref{def:c1-idempotents} takes values
in a finite-dimensional space. Then $S$ is a strong semilattice of
finite-dimensional Lie groups.
\end{corollary}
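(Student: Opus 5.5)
The plan is to combine Theorem~\ref{th:c1-idempotents-discrete} with Theorem~\ref{th:hilbert-clifford}. The one thing to supply is that finite-dimensional charts give weak local Euclideanity at the idempotents.

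First, apply Theorem~\ref{th:c1-idempotents-discrete}. It shows that $E(S)$ is discrete in $S$. Hence, by Proposition~\ref{prop:mp-equivalences}, every maximal subgroup $G_e$ is open in $S$, and $S$ is the topological sum $\bigsqcup_{e} G_e$. This already says that $S$ is a strong semilattice of topological groups, with each $G_e$ carrying the subspace topology.

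Second, verify that $S$ is weakly locally Euclidean at the idempotents. Fix $e\in E(S)$ and take the chart $\varphi_e\colon W_e\to V_e\subseteq X_e$ from Definition~\ref{def:c1-idempotents}. By hypothesis $X_e$ is finite-dimensional, say of dimension $n(e)$. Any finite-dimensional Banach space is linearly homeomorphic to $\mathbb R^{n(e)}$, because all norms on it are equivalent. Composing $\varphi_e$ with such an isomorphism gives a homeomorphism of the open neighbourhood $W_e$ of $e$ onto an open subset of $\mathbb R^{n(e)}$, which is exactly the required property at $e$.

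Third, with both facts in hand, Theorem~\ref{th:hilbert-clifford} applies; equivalently, one can invoke Corollary~\ref{cor:c1-plus-wle-lie} directly. So each $G_e$ is a Lie group. Concretely, $W_e\cap G_e$ is an open neighbourhood of $e$ in $G_e$, since $G_e$ is open, and it is homeomorphic to an open subset of $\mathbb R^{n(e)}$. Thus $G_e$ is locally Euclidean of dimension $n(e)$, by translation homogeneity, and Theorem~\ref{th:hilbert5} shows it is a finite-dimensional Lie group. Finally, the bonding maps $\varphi_{f,e}$ are continuous homomorphisms between Lie groups, hence smooth by the classical automatic-smoothness theorem. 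Therefore $S$ is a strong semilattice of finite-dimensional Lie groups in the strongest reasonable sense.

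No step is a real obstacle, since the heavy lifting is done by Theorem~\ref{th:c1-idempotents-discrete} and by the solution of Hilbert's fifth problem. The only point needing care is finite-dimensionality. It must be read off from the chart dimension $n(e)$, so the restriction $W_e\cap G_e$ has to be genuinely open in $G_e$, which is why discreteness of $E(S)$ is used first. It should also be remarked that different $G_e$ may have different dimensions, which the definition of Lie type allows.
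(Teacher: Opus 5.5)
Your proposal is correct and is essentially the paper's own proof: Theorem~\ref{th:c1-idempotents-discrete} together with Proposition~\ref{prop:mp-equivalences} makes $S$ a strong semilattice of topological groups, finite-dimensional charts give weak local Euclideanity at the idempotents, and Theorem~\ref{th:hilbert-clifford} (which you also unpack correctly) concludes. One small precision: $W_e\cap G_e$ is always open in $G_e$ for the subspace topology, so what openness of $G_e$ in $S$ actually buys is that $\varphi_e(W_e\cap G_e)$ is open in $\mathbb R^{n(e)}$; your remark on smoothness of the bonding maps is a harmless extra not required by the statement.
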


\begin{proof}
By Theorem~\ref{th:c1-idempotents-discrete}, the semilattice $E(S)$ is
discrete, hence $S$ is a strong semilattice of topological groups by
Proposition~\ref{prop:mp-equivalences}.
Since each chart at an idempotent takes values in a finite-dimensional Banach
space, the space $S$ is weakly locally Euclidean at the idempotents. Therefore
Theorem~\ref{th:hilbert-clifford} applies and shows that each maximal subgroup
$G_e$ is a finite-dimensional Lie group. Hence $S$ is a strong semilattice of
finite-dimensional Lie groups.
\end{proof}

The converse also holds.

\begin{proposition}\label{prop:lie-implies-c1}
Let $S$ be a strong semilattice of finite-dimensional Lie groups. Then $S$ is
$C^\infty$ at the idempotents. In particular, $S$ is $C^1$ at the idempotents.
\end{proposition}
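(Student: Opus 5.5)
The plan is to use the open decomposition given by the hypothesis directly. Since $S$ is a strong semilattice of topological groups, Proposition~\ref{prop:mp-equivalences} shows that every maximal subgroup $G_e$ is open in $S$. Each $G_e$ is, by assumption, a finite-dimensional Lie group with its subspace topology. Fix $e\in E(S)$ and let $n=\dim G_e$. Choose a smooth chart of the Lie group $G_e$ around its identity $e$:
\[
\psi\colon W\to V\subseteq\mathbb R^n,\qquad \psi(e)=0,
\]
where $W\subseteq G_e$ is an open neighbourhood of $e$ and $V$ is open. Because $G_e$ is open in $S$, the set $W$ is also open in $S$. Hence $\psi$ is a homeomorphism from an open subset of $S$ onto an open subset of the Banach space $X_e:=\mathbb R^n$. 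We take $W_e:=W$ and $\varphi_e:=\psi$.

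Next we choose $U_e$. Multiplication $G_e\times G_e\to G_e$ is continuous and $ee=e\in W$. So there is an open neighbourhood $U_e\subseteq W$ of $e$ in $G_e$ with $U_eU_e\subseteq W$. For instance, we may take $U_e$ symmetric with $U_e^2\subseteq W$. Again $U_e$ is open in $S$, because $G_e$ is.

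For $x,y\in U_e\subseteq G_e$, the product $xy$ computed in $S$ coincides with the product in the group $G_e$. This follows from the multiplication formula of Remark~\ref{rem:clifford-strong-semilattice}, since $\varphi_{e,e}=\mathrm{id}$. Therefore the coordinate expression
\[
\widetilde\mu_e(u,v)=\psi\bigl(\psi^{-1}(u)\psi^{-1}(v)\bigr),\qquad u,v\in\psi(U_e),
\]
is exactly the local coordinate representation of the group multiplication of $G_e$ in the chart $\psi$. Since $G_e$ is a Lie group, this map is $C^\infty$, and in particular $C^1$ in the Fr\'echet sense on $\mathbb R^n\times\mathbb R^n$. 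This establishes all the requirements of Definition~\ref{def:c1-idempotents}, with $C^\infty$ in place of $C^1$, at every idempotent $e$.

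The only point needing care is that Definition~\ref{def:c1-idempotents} is phrased in terms of neighbourhoods in $S$ rather than in $G_e$. This is exactly where the openness of $G_e$ from Proposition~\ref{prop:mp-equivalences} is used: without it, a neighbourhood of $e$ in $S$ might meet other maximal subgroups and the chart would fail. Otherwise the argument is routine. Note that the charts produced here are finite-dimensional, so together with Corollary~\ref{cor:c1-idempotents-lie} this gives the equivalence of Theorem~\ref{thm:intro-finite}.
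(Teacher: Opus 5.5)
Your proof is correct and follows the same route as the paper: you use Proposition~\ref{prop:mp-equivalences} to get that $G_e$ is open in $S$, and then take a Lie-group chart of $G_e$ at $e$ directly as the chart required by Definition~\ref{def:c1-idempotents}. Your version is in fact more careful than the paper's on one point. You choose a smaller $U_e$ with $U_eU_e\subseteq W_e$ and normalize $\psi(e)=0$, which is what the condition $m(U_e\times U_e)\subseteq W_e$ actually requires. The paper sets $W_e=U_e$ while only arranging $U_e\cdot U_e\subseteq G_e$, and so glosses over that condition.
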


\begin{proof}
By Proposition~\ref{prop:mp-equivalences}, each maximal subgroup $G_e$ is open
in $S$. Fix $e\in E(S)$. Since $G_e$ is a finite-dimensional Lie group, there
exist an open neighbourhood
\[
e\in U_e\subseteq G_e
\]
and a smooth chart
\[
\psi_e\colon U_e\to \Omega_e\subseteq \mathbb R^{n_e}.
\]
Shrinking $U_e$ if necessary, we may assume that
\[
U_e\cdot U_e\subseteq G_e
\]
and that the coordinate representation of the group multiplication is smooth.
Since $G_e$ is open in $S$, the set $U_e$ is also open in $S$. Thus
Definition~\ref{def:c1-idempotents} is satisfied, with $W_e=U_e$ and
$\varphi_e=\psi_e$. Hence $S$ is $C^\infty$ at the idempotents.
\end{proof}

Combining the two results, we obtain the finite-dimensional
characterization announced in the introduction.

\begin{theorem}\label{th:c1-idempotents-characterization}
For a topological Clifford semigroup $S$, the following are equivalent:
\begin{enumerate}
\item $S$ is $C^1$ at the idempotents with finite-dimensional charts;
\item $S$ is a strong semilattice of finite-dimensional Lie groups.
\end{enumerate}
\end{theorem}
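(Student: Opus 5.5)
The equivalence will be obtained by combining the two one-directional results just established; essentially no new argument is needed.

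For (1)$\Rightarrow$(2), assume $S$ is $C^1$ at the idempotents with finite-dimensional charts, so each $X_e$ in Definition~\ref{def:c1-idempotents} is finite-dimensional. This is precisely the hypothesis of Corollary~\ref{cor:c1-idempotents-lie}, which gives (2). To see that its proof runs under exactly this hypothesis, I retrace the chain:
\begin{enumerate}
\item Theorem~\ref{th:c1-idempotents-discrete} shows that $E(S)$ is discrete, via the fact that $DH(0)=2P-I$ is an involution and the inverse function theorem applies.
\item Proposition~\ref{prop:mp-equivalences} then shows that $S$ is a strong semilattice of topological groups, with each $G_e$ open.
\item A finite-dimensional Banach space is linearly homeomorphic to some $\mathbb R^{n}$, so the charts $\varphi_e\colon W_e\to V_e$ make $S$ weakly locally Euclidean at the idempotents.
\item Theorem~\ref{th:hilbert-clifford} then makes each $G_e$ a finite-dimensional Lie group.
\end{enumerate}

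For (2)$\Rightarrow$(1), I invoke Proposition~\ref{prop:lie-implies-c1}. The charts it constructs are the Lie group charts $\psi_e\colon U_e\to\Omega_e\subseteq\mathbb R^{n_e}$ with $W_e=U_e$, so they automatically take values in finite-dimensional spaces. The only routine adjustments needed for Definition~\ref{def:c1-idempotents} are these:
\begin{enumerate}
\item Normalize so that $\psi_e(e)=0$, by translating the chart in $\mathbb R^{n_e}$.
\item Arrange the nesting $U_e\subseteq W_e$ with $m(U_e\times U_e)\subseteq W_e$. Take $W_e$ to be a chart domain around $e$ in $G_e$, and choose a smaller open $U_e$ with $U_eU_e\subseteq W_e$, which exists by continuity of multiplication in $G_e$ at $(e,e)$.
\item Note that $W_e$ is open in $S$ because $G_e$ is open in $S$ (Proposition~\ref{prop:mp-equivalences}), and that multiplication in coordinates is smooth, hence $C^1$.
\end{enumerate}

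The main point requiring care is the bookkeeping in Proposition~\ref{prop:lie-implies-c1}, namely using two nested neighbourhoods $U_e\subseteq W_e$ rather than a single $U_e$ with $U_e\cdot U_e\subseteq G_e$. Beyond that, the proof is just the citation of the two previous results, one for each direction.
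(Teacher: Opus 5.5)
Your proposal is correct and matches the paper's proof, which simply cites Corollary~\ref{cor:c1-idempotents-lie} for (1)$\Rightarrow$(2) and Proposition~\ref{prop:lie-implies-c1} for (2)$\Rightarrow$(1). Your extra bookkeeping in the second direction, normalizing $\psi_e(e)=0$ and choosing $U_e\subseteq W_e$ with $U_eU_e\subseteq W_e$, is a genuine improvement: the paper's proof of Proposition~\ref{prop:lie-implies-c1} sets $W_e=U_e$, which would require $U_eU_e\subseteq U_e$, and this fails for a typical chart domain.
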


\begin{proof}
The implication (1)$\Rightarrow$(2) is Corollary~\ref{cor:c1-idempotents-lie},
while (2)$\Rightarrow$(1) is Proposition~\ref{prop:lie-implies-c1}.
\end{proof}

\begin{remark}\label{rem:banach-limits}
The passage from ``topological group'' to ``Lie group'' in the above corollaries
depends crucially on finite dimensionality, through Hilbert's fifth problem. In
infinite dimensions no comparable general theorem is available. Thus
Theorem~\ref{th:c1-idempotents-discrete} should be viewed primarily as a
topological rigidity result: local $C^1$ regularity at the idempotents forces
discreteness of the idempotent semilattice, independently of any
finite-dimensional Lie theory.
\end{remark}

\section*{Ackowlegments}
The authors gratefully acknowledge the support of the following projects and funding sources:
\begin{itemize}
\item the Italian Ministry of Education, University and Research through the PRIN 2022 project DeKLA (``Developing Kleene Logics and their Applications'', project code: 2022SM4XC8). 

\item the Fondazione di Sardegna for the support received by the project MAPS (grant number F73C23001550007)  LOMEA and  ProBiki.
\item the INDAM GNSAGA (Gruppo Nazionale per le Strutture Algebriche, Geometriche e loro Applicazioni).
\end{itemize}

\end{document}